\documentclass[12pt,a4paper]{amsart}%

\usepackage{amsfonts,amsmath,amssymb}
\usepackage{amssymb,amsthm,amsxtra}
\usepackage[usenames]{color}
\usepackage{amscd}
\usepackage{amsthm}
\usepackage{amsfonts}
\usepackage{amssymb}
\usepackage{amsmath}
\usepackage{graphicx}
\usepackage{hyperref}
\usepackage{enumerate}%
\usepackage[all]{xy}
\usepackage[usenames,dvipsnames]{xcolor}
\usepackage{mathrsfs}
\usepackage{comment}
\usepackage[capitalize]{cleveref}
\usepackage{cleveref}
\baselineskip 18pt \textwidth 18cm \sloppy
\oddsidemargin=-0.5cm
\evensidemargin=-0.5cm


 \newtheorem*{corollary*}{Corollary}
 \newtheorem*{construction*}{Construction}
 \newtheorem*{definition*}{Definition}
 \newtheorem*{notation*}{Notation}
 \newtheorem*{lemma*}{Lemma}
 \newtheorem*{theorem*}{Theorem}
 \newtheorem*{remark*}{Remark}
 \newtheorem*{example*}{Example}
 \newtheorem*{conjecture*}{Conjecture}
 \newtheorem*{condition*}{Condition}
 \newtheorem*{result*}{Result}
 \newtheorem*{property*}{Property}

 \newtheorem*{cor*}{Corollary}
 \newtheorem*{const*}{Construction}
 \newtheorem*{defn*}{Definition}
 \newtheorem*{notn*}{Notation}
 \newtheorem*{lem*}{Lemma}
 \newtheorem*{thm*}{Theorem}
 \newtheorem*{rem*}{Remark}
 \newtheorem*{exm*}{Example}
 \newtheorem*{conj*}{Conjecture}

 \newtheorem{lemma}{Lemma}[section]
 
 \newtheorem{remark}[lemma]{Remark}
 \newtheorem{example}[lemma]{Example}
 \newtheorem{theorem}[lemma]{Theorem}

%
%
 \newtheorem{thm}[lemma]{Theorem}
 \newtheorem{prop}[lemma]{Proposition}
 \newtheorem{lem}[lemma]{Lemma}
 \newtheorem{defn}[lemma]{Definition}
 
 \newtheorem{cor}[lemma]{Corollary}

 \newtheorem{rem}[lemma]{Remark}

 \newtheorem{introtheorem}{Theorem}

 \crefname{introtheorem}{theorem}{theorems}
 \Crefname{introtheorem}{Theorem}{Theorems}
  \newtheorem{introthm}[introtheorem]{Theorem}
   \crefname{introthm}{theorem}{theorems}
 \Crefname{introthm}{Theorem}{Theorems}

 \crefname{introdef}{definition}{definition}
 \Crefname{introdef}{Definition}{Definition}

  \crefname{introcorollary}{corollary}{corollaries}
 \Crefname{introcorollary}{Corollary}{Corollaries}

 \crefname{introquest}{Question}{Questions}
 \Crefname{introquest}{Question}{Questions}

 \newtheorem{introcor}[introtheorem]{Corollary}
   \crefname{introcor}{corollary}{corollaries}
 \Crefname{introcor}{Corollary}{Corollaries}
 
   \crefname{introconjecture}{conjectures}{conjectures}
 \Crefname{introconjecture}{Conjecture}{Conjectures}
 
    \crefname{introconj}{conjectures}{conjectures}
 \Crefname{introconj}{Conjecture}{Conjectures}

     \crefname{introlem}{lemma}{lemmas}
 \Crefname{introlem}{Lemma}{Lemmas}
 
 \crefname{introremark}{remark}{remarks}
 \Crefname{introremark}{Remark}{Remarks}
 
  \crefname{introrem}{remark}{remarks}
 \Crefname{introrem}{Remark}{Remarks}

 \newtheorem{introprop}[introtheorem]{Proposition}
   \crefname{introprop}{Proposition}{Propositions}
 \Crefname{introprop}{Proposition}{Propositions}

   \crefname{introdefn}{definition}{definitions}
 \Crefname{introdefn}{Definition}{Definitions}
 
   \crefname{intronotn}{notation}{notations}
 \Crefname{intronotn}{Notation}{Notations}
 
   \crefname{introtask}{task}{tasks}
 \Crefname{introtask}{Task}{Tasks}
 
  \crefname{introprob}{problem}{problems}
 \Crefname{introprob}{Problem}{Problems}
 
   \crefname{introquestion}{question}{questions}
 \Crefname{introquestion}{Question}{Questions}

   \crefname{introexm}{example}{example}
 \Crefname{introquestion}{Example}{Example}
 \crefname{theorem}{theorem}{theorems}
 \Crefname{theorem}{Theorem}{Theorems}
  \crefname{thm}{theorem}{theorems}
 \Crefname{thm}{Theorem}{Theorems}
  \crefname{corollary}{Corollary}{Corollaries}
 \Crefname{corollary}{Corollary}{Corollaries}
   \crefname{cor}{Corollary}{Corollaries}
 \Crefname{cor}{Corollary}{Corollaries}
   \crefname{conjecture}{conjectures}{conjectures}
 \Crefname{conjecture}{Conjecture}{Conjectures}

    \crefname{conj}{conjectures}{conjectures}
 \Crefname{conj}{Conjecture}{Conjectures}

     \crefname{lem}{lemma}{lemmas}
 \Crefname{lem}{Lemma}{Lemmas}

      \crefname{lemma}{Lemma}{Lemmas}
 \Crefname{lemma}{Lemma}{Lemmas}

 \crefname{remark}{remark}{remarks}
 \Crefname{remark}{Remark}{Remarks}

  \crefname{rem}{remark}{remarks}
 \Crefname{rem}{Remark}{Remarks}

   \crefname{rem}{remark}{remarks}
 \Crefname{rem}{Remark}{Remarks}

   \crefname{proposition}{Proposition}{Proposition}
 \Crefname{proposition}{Proposition}{Proposition}

    \crefname{prop}{Proposition}{Propositions}
 \Crefname{prop}{Proposition}{Propositions}

   \crefname{defn}{definition}{definitions}
 \Crefname{defn}{Definition}{Definitions}

   \crefname{notn}{notation}{notations}
 \Crefname{notn}{Notation}{Notations}

   \crefname{task}{task}{tasks}
 \Crefname{task}{Task}{Tasks}

  \crefname{prob}{problem}{problems}
 \Crefname{prob}{Problem}{Problems}

   \crefname{question}{question}{questions}
 \Crefname{question}{Question}{Questions}

\newcommand{\alp}{\alpha}

\newcommand{\gam}{\gamma}

\newcommand{\lam}{\lambda}

\newcommand{\eps}{\varepsilon}

\newcommand{\End}{\operatorname{End}}



\newcommand{\bC}{\mathbb{C}}

\newcommand{\A}{\mathbb{A}}

\newcommand{\C}{\mathbb{C}}

\newcommand{\bR}{\mathbb{R}}
\newcommand{\bZ}{\mathbb{Z}}

\newcommand{\R}{\mathbb{R}}

\newcommand{\bQ}{\mathbb{Q}}





\providecommand{\cP}{\mathcal{P}}
\providecommand{\cQ}{\mathcal{Q}}
\providecommand{\cR}{\mathcal{R}}

\providecommand{\sub}{\subset}

\newcommand{\enc}{E_n^{(c)}}
\newcommand{\fnc}{F_n^{(c)}}

\providecommand{\Z}{\mathbb{Z}}

\newcommand{\DimaN}[1]{{{#1}}}
\newcommand{\DimaM}[1]{{{#1}}}
\newcommand{\DimaL}[1]{{{#1}}}
\newcommand{\DimaK}[1]{{{#1}}}
\newcommand{\DimaJ}[1]{{{#1}}}
\newcommand{\DimaI}[1]{{{#1}}}
\newcommand{\DimaH}[1]{{{#1}}}
\newcommand{\DimaF}[1]{{{#1}}}
\newcommand{\DimaG}[1]{{{#1}}}
\newcommand{\DimaE}[1]{{{#1}}}
\newcommand{\DimaD}[1]{{{#1}}}

\newcommand{\DimaC}[1]{{{#1}}}
\newcommand{\DimaB}[1]{{{#1}}}

\newcommand{\Dima}[1]{{{#1}}}

\author{Joseph Bernstein}
\address{  School of Mathematical Sciences, Tel Aviv University,
 Ramat Aviv, Tel Aviv 69978, Israel.}
\email{bernstei@tauex.tau.ac.il}
\urladdr{http://www.math.tau.ac.il/~bernstei/}

\author{Dmitry Gourevitch}
\address{Faculty of Mathematics and Computer Science, Weizmann
Institute of Science, Herzl 234, Rehovot 7610001, Israel.}
\email{dimagur@weizmann.ac.il}
\urladdr{http://www.wisdom.weizmann.ac.il/~dimagur}

\author{Siddhartha Sahi}
\address{Department of Mathematics, Rutgers University, Hill Center -
Busch Campus, 110 Frelinghuysen Road Piscataway, NJ 08854-8019, USA}
\email{sahi@math.rugers.edu}
\urladdr{https://sites.math.rutgers.edu/~sahi/}

\date{\today}


\newcommand{\sms}{\smallskip}

\newcommand{\proofend}{\hfill$\Box$\smallskip}

\newcommand{\nc}{\newcommand}

\def\calP{{\mathcal P}}

\def\+{{\bf[P]}}
\def\*{(*)}

\def\tb{\textbf}

\def\1{{\bf 1.}}
\def\2{{\bf 2.}}
\def\3{{\bf 3.}}
\def\4{{\bf 4.}}
\def\5{{\bf 5.}}
\def\6{{\bf 6.}}
\def\7{{\bf 7.}}
\def\8{{\bf 8.}}
\def\9{{\bf 9.}}


\begin{document}

\subjclass{33C45, 33D45}

\title[On the classification of hypergeometric orthogonal polynomials on the real line]{On the classification of hypergeometric families of orthogonal polynomials on the real line}
\begin{abstract}

Several important families of orthogonal polynomials on the real line are called ``hypergeometric'' since they can be explicitly described in terms of some hypergeometric series  $_pF_q$ that uses the degree $n$ of the polynomial as a parameter. It is natural to ask if one can classify all such families. Indeed many classification results have been obtained in this direction, but only under the additional assumption that the polynomials are eigenfunctions of some second order operator. 

In this paper we initiate a new approach to this classification. We propose a definition of an \emph{HG family} that makes precise, but also generalizes, the notion of a ``hypergeometric'' family. Our main result is that there are exactly 10 types of orthogonal HG families, 8 from the well-known Askey scheme and 2 additional types of families that can be expressed in terms of Lommel polynomials.

Our methods in this paper are  algebraic.  In particular, we classify a wider class of \emph{quasi-orthogonal} HG families, and this classification is valid over an arbitrary field of characteristic zero.   We also define a more general  class of \emph{rational HG families} and prove a structure theorem for quasi-orthogonal families in this class. We  provide examples for such families, that are in particular new families of orthogonal polynomials of potential interest.
 
\end{abstract}
\maketitle

\section{Introduction}
The theory of orthogonal polynomials on the real line is a classical subject.
There are many classical families of orthogonal polynomials that are called \tb{Hypergeometric}. 
These families have numerous important applications in many areas of Mathematics, Physics, and Engineering.
The name ``Hypergeometric'' comes from the fact that these polynomial families can be explicitly described in terms of some hypergeometric series  $_pF_q$ that uses the degree $n$ of the polynomial as a parameter.

There is an old idea to give some kind of classification of all possible hypergeometric families of orthogonal polynomials. 
The difficulty is that the condition of ``hypergeometricity'' mentioned above is difficult to formalize, so it is not clear what to classify.
 \medskip
 
In this paper we initiate a new approach to the classification. Namely, we do the following:
\begin{enumerate}[(i)]
\item We define a class of polynomial families that we call HG--families, thus suggesting a formalization for the intuitive term ``hypergeometric''. 
\item We classify all HG-families that are orthogonal with respect to some positive measure on the real line, and we show that each such family can be explicitly expressed in terms of some hypergeometric series $_pF_q$ in a precise way.
{\bf This is the main part of the paper.} 
\item In fact, we classify a wider class $Q$ of \emph{quasi-orthogonal} HG-families, by which we mean families that are orthogonal with respect to some non-degenerate complex-valued quadratic form.  

\end{enumerate} 

\DimaM{
We divide the class $Q$ into three components $W, E, F$, of dimensions 6, 3, 3, respectively. 
The component $W$ contains \emph{all}\footnote{See Remark \ref{rem:class} regarding the Hermite family.
} infinite polynomial families that appear in the Askey scheme of orthogonal polynomials (see for example \cite{KLS}). The  components $E$ and $F$ correspond to two completely new families of hypergeometric orthogonal polynomials, which are expressible in terms of a $_4F_1$ series. They turn out to be related to the Lommel polynomials that appear in the study of Bessel functions.
}
\begin{remark}
In fact in \cite{KLS}  they consider a more general Askey scheme that also deals with $q$-hypergeometric functions and 
 with ``discrete families''  of orthogonal polynomials. We are planning to \DimaN{extend} our results to these cases in the forthcoming paper.
\end{remark}

 A hypergeometric function $F(A, B; x)$ in fact depends on two families of parameters, that we call $A$- parameters and $B$- parameters (see e.g. \eqref{=pFq} below). In fact, it is more natural to consider 
 the variable $x$ as one of $A$- parameters, and write $F(A,B)$.
 The main hypergeometric condition for a family of    polynomials $P_n(z)$ is that there exists a hypergeometric function $F$ such that $P_n(z) = F(A(n,z), B(n,z))$,
 where $A$ and $B$ are some simple functions of $n$ and $z$.
 Our definition of $HG$-families implies that the parameters $B$ do not depend on $n$ and $z$.
 
We also  define more general class  of \tb{``rational HG-families"}, for which the $B$-parameters may also depend on $n$.     Using the same technique, we have made a first step towards the classification of such families- see Theorem \ref{thm:rat} below.
We also give some very interesting examples of rational HG-families of orthogonal polynomials 
  that are explicitly described in terms of hypergeometric functions, but are quite different from all classical families. 
  
 Our methods are purely algebraic.  
 In particular, we first describe $Q$, and this explicit description of $Q$ is valid over every base field of characteristic zero. 
The class of orthogonal families is a subset of $Q$, and its description is deduced using the Gauss-Favard theorem. 
 
 \begin{remark}
Our result is distinct from and more general than previous classifications, which require the polynomials to be eigenfunctions of a specific differential or difference operator. See section \ref{subsec:backgr}.
\end{remark}



Let us now define the class of HG-families and state our main results.

%
%
%


\subsection{HG families}

We consider sequences  
 $ \calP = (P_0, P_1, \ldots, P_n,\ldots )$, where each $P_n=P_n(z)$ is a monic polynomial of degree $n$ in $\C[z]$. We will call such a sequence a \tb{family of polynomials}.

Usually, hypergeometric series are described in terms of the monomial basis for the space of polynomials. We will work with more general bases, called \emph{Newtonian bases}.
A Newtonian basis is a basis defined by a recurrence relation given by a polynomial in the index.

\begin{defn}
Fix a polynomial $R\in \C[s]$. The \emph{Newtonian basis} given by $R$ is the family of polynomials $\Phi_R=\{\Phi_k\}$ defined by the recurrence relation
\begin{equation}\label{=Phi}
\Phi_{k+1}=z\Phi_k+R(k)\Phi_k \text{ for }k\geq 0, \text{ and } \Phi_{-1}=0,\,\,\Phi_{0}=1.
\end{equation}
\end{defn}
 
Once a Newtonian basis $\{\Phi_k\}$ is fixed, to give a family of polynomials is the same as to give a system of coefficients $c(n,k)$ such that $P_n = \sum_{k=0}^n c(n, k) \Phi_k$ for all $n\geq 0$.
\DimaM{Since our families are monic, we have $c(n,n)=1$. We also define
\begin{equation}\label{=cnk}
c(n,k)=0 \text{ unless }n\geq k\geq 0. 
\end{equation}

Roughly speaking, we will say that a family $\cP$ is a \emph{rational HG-family} if the ratio of subsequent coefficients $c(n,k)$ of each polynomial $P_n$ is a rational function in both indices. That is,
if there exists a rational function $f(u,s)$  such that 
 $c(n, k)$ satisfy
\begin{equation}\label{=cnkf}
c(n,k+1)=f(n,k)c(n,k). 
\end{equation}
Note that since $c(n,n)=1$, by this condition $f$ defines the coefficients $c(n,k)$ recursively, and thus the pair $(R,f)$ defines the family. 
We will say that a rational HG-family is an \emph{HG-family} if $f(u,s)$ depends polynomially on the first variable $u$.
Since $f$ can have poles we need to be more careful.

\begin{defn}\DimaK{Let $\calP=\{P_n\}$ be a family of  polynomials. We say that $\calP$ is a \tb{rational HG-family} if there exist relatively prime polynomials $N(u,s)$ and $D(u,s)$,  and a Newtonian basis $\Phi_R=\{\Phi_k\}$, such that for any $n\in \Z_{\geq 0}$ we have $P_n=\sum_{k=0}^n c(n,k)\Phi_k$ where $c(n,k)$ satisfy 
\begin{equation}\label{=ND}
D(n,k)c(n, k + 1)  =  N (n, k)c(n, k) \quad\text{ for all } n,k \in \Z.
\end{equation}
We say that $\calP$ is an \tb{HG-family} if it is a rational HG-family, and $D$ does not depend on $u$, {\it i.e.} $D\in \C[s]$.}

We will say that a rational HG-family $\cP$ is given by the pair $(R,f)$ where $R$ is the polynomial defining the Newtonian basis, and $f$ is the rational function
$f(u,s)=N(u,s)/D(u,s)$. 
\end{defn}
Note that the condition $\deg P_n=n$ and (\ref{=ND}) imply that $N(u,s)$ is divisible by $u-s$. For HG-families, the condition $c(n,-1)=0$ implies that  $D(u,s)$ is divisible by $s+1$.
Let us  now give several examples  of HG-families. 

\begin{example}
\begin{enumerate}[(i)]
\item If we take $R=0$, $N=(s-u)(u+s)$, and $D=s+1$ we obtain the Bessel family, given by the closed formula 
$$P_n=\frac{(-1)^n}{\binom{2n-1}{n}}\sum_{k=0}^n\binom{n+k-1}{k,k-1,n-k}z^k$$
\item If we take $R=0$, $N=u-s$, and $D=u-s-1$ we obtain the family $P_n=z^n$.
Indeed, in this case we have $\Phi_k=z^k$, and $c(n,k)=\delta_{nk}$ for $n\geq 0$. 

 Another way to obtain the same family is to take $R=1$, $N=u-s$, and $D=s+1$. Then we have $\Phi_k=(z-1)^k$, $c(n,k)=\binom{n}{k}$, and $P_n=\sum_{k=0}^n \binom{n}{k}(z-1)^k=z^n$.

\item If we take $R(s)=-s$, $N=s-u$, and $D=s+1$ we obtain the Charlier family, given by the closed formula 
$$P_n=\sum_{k=0}^n(-1)^{n-k}\binom{n}{k}z(z-1)\cdots(z-k+1)$$
\end{enumerate}
\end{example}
}

\subsection{Quasi-orthogonal and orthogonal families}

We will say that a family $\calP$ of polynomials is \tb{quasi-orthogonal}  if there exists a linear functional $M: \bC[z] \to \bC$ such that the matrix with entries $A_{ij} = M(P_i P_j)$ is an invertible diagonal matrix -- 
in other words, we have $A_{ij} = 0$ if $i \neq j$ and $A_{ij} \neq 0$ if $i = j$.

\sms

The functional $M$ is usually called a \emph{moment functional}. It is easily seen that $M$ completely determines $\calP$. \DimaD{Conversely, the functional $M$ is uniquely determined by the quasi-orthogonal family $\cP$ and the normalization $M(1)=1$.}
\sms

We say that a quasi-orthogonal family $\calP$ is \tb{orthogonal} if all the polynomials $P_i$ are defined over $\bR$ and there exists a moment functional $M$ such that all
the diagonal entries $A_{ii}$ are real and strictly positive.
It is known that in this case the moment functional $M$ is defined by integration with some positive measure $\mu$ on $\bR$ that has an  infinite support, and $\calP$ is, up to normalization, the standard
family of orthogonal polynomials defined by this measure (see {\it e.g.} \cite[Ch.II]{Chi} or  \cite[Ch.2]{Ism}).

\subsection{Main results}

\begin{introtheorem}[See \S \ref{sec:PfMain} below]\label{thm:main}
Suppose that a pair $(R,f)$ defines a quasi-orthogonal HG-family. Then 
the pair $(R,f)$ belongs to the following list
\begin{enumerate}[(a)]
\item \label{it:W} $R$ has degree $\le 2$, and there exist polynomials $q,v$ of degree  $\le 1$ such that 
\begin{equation*}
f(u,s)=\frac{(s-u)q(s+u)}{sq(s)\big[R(s+1)-R(0)\big] + (s+1)v(s)}
\end{equation*} 
\item \label{it:E} $R$ is a constant, and for some constants $c,\lam$ we have
\begin{equation*}
f(u,s)=\lam(s-u)(s-u+1-c)\frac{(s+u+1)(s+u+c)}{(s+1/2)(s+1)}.
\end{equation*}
\item \label{it:F} $R$ is a constant, and for some constants $c,\lam$ we have
\begin{equation*}
f(u,s)=\lam(s-u)(s-u+1-c)\frac{(s+u+2)(s+u+c+1)}{(s+1)(s+3/2)}.
\end{equation*}
\end{enumerate}
\end{introtheorem}

\DimaN{This theorem  implies in particular that $N(u,s)$ factorizes into linear factors over $\C$.}
Most choices of $(R,f)$ from the above list lead to a well-defined quasi-orthogonal family, but not every choice does. Also, different choices of $(R,f)$ can lead to the same family. \DimaN{Our next two results will clarify the situation. In order to formulate them, we first recall the classical result of Gauss and Favard.}


\begin{thm}[{see {\it e.g.}\cite[\S 4]{Chi}}]\label{thm:Fav}
Let $\{P_n\}_{n=0}^{\infty}$ be a family of  polynomials.
\begin{enumerate}[(i)]
\item The family $\{P_n\}$ is quasi-orthogonal if and only if
there exist (unique) sequences $\alp_n$ and $\beta_n$ of complex numbers such that
\begin{equation}\label{=3t}
zP_n=P_{n+1}+\alp_nP_n+\beta_{n-1}{P_{n-1}}  \,\,\forall  n\in \Z_{\ge 0}
\end{equation}
and $\beta_n\neq 0$ for all $n\in \Z_{\geq 0}$.
\item A quasi-orthogonal family $\{P_n\}$ is orthogonal if and only if   for all $n\in \Z_{\ge 0}$ we have $\alp_n,\beta_n\in \R$, and $\beta_n>0$.
\end{enumerate}
\end{thm}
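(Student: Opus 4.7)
The plan is to prove (i) via a standard argument built around the moment functional, and (ii) by tracking reality and signs through the same construction.

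For the forward direction of (i), I would fix a functional $M$ realizing quasi-orthogonality and expand $zP_n$ in the basis $\{P_k\}$. Since $zP_n$ is monic of degree $n+1$, one writes $zP_n = P_{n+1} + \sum_{k=0}^{n} a_{n,k} P_k$, and each coefficient is extracted via
\[
a_{n,j} Q_{jj} = M(zP_n \cdot P_j) = M(P_n \cdot zP_j).
\]
For $j \le n-2$ the polynomial $zP_j$ has degree $\le n-1$, hence lies in the span of $P_0,\dots,P_{n-1}$, and quasi-orthogonality forces $M(P_n \cdot zP_j)=0$, so $a_{n,j}=0$. This yields the three-term recursion with $\alpha_n := a_{n,n}$ and $\beta_n := a_{n,n-1}$, while the case $n=0$ supplies the constant $c_0 := a_{0,0}$. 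To see $\beta_n\ne 0$, one uses that $zP_{n-1}=P_n+(\text{lower order})$ is monic of degree $n$, so $M(P_n \cdot zP_{n-1}) = Q_{nn} \neq 0$ and $\beta_n = Q_{nn}/Q_{n-1,n-1}\ne 0$. Uniqueness is immediate from the basis property.

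For the converse in (i), I would set $h_0 := 1$ and $h_n := \beta_1 \cdots \beta_n$ (all nonzero by hypothesis), and define a symmetric bilinear form on $\C[z]$ by $\langle P_m, P_n\rangle := h_n \delta_{m,n}$. The crucial step is to verify that multiplication by $z$ is self-adjoint for this form, i.e.\ $\langle zf, g\rangle = \langle f, zg\rangle$ for all $f, g \in \C[z]$; by bilinearity this reduces to checking basis pairs $(P_m, P_n)$, where both sides are computed directly from the recursion and collapse to the single identity $h_{n+1} = \beta_{n+1} h_n$ (plus the trivial diagonal case). Once adjointness is secured, I would define $M(f) := \langle f, 1\rangle$; then an easy induction on $\deg f$, using adjointness of $z$ at each step, yields $M(fg) = \langle f,g\rangle$ for all $f,g$, which in particular gives $Q_{ij} = M(P_iP_j) = h_i \delta_{ij}$ as required.

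For (ii) the plan is simply to track signs and reality. If $\{P_n\}$ is orthogonal then $Q_{nn} = h_n > 0$, the recursion $h_n = \beta_n h_{n-1}$ forces $\beta_n > 0$, and $\alpha_n = M(zP_n^2)/M(P_n^2)$ is real because $M$ is real-valued on real polynomials. Conversely, given real $\alpha_n,\beta_n$ with $\beta_n>0$, the $P_n$ are real polynomials, and the functional $M$ built above satisfies $Q_{nn} = h_n = \beta_1\cdots\beta_n > 0$. The main technical step of the whole argument is the adjointness check in the converse of (i); everything else is routine bookkeeping, and no moment-problem machinery is needed here, since the paper's definition of orthogonality asks only for a linear functional with positive diagonal entries.
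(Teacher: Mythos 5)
Your proof is correct; the paper does not prove this statement at all --- it quotes it from \cite[\S 4]{Chi} --- and your argument (expanding $zP_n$ against the moment functional for the forward direction and noting $\beta_n=Q_{nn}/Q_{n-1,n-1}\neq 0$, and for the converse defining $h_n=\beta_1\cdots\beta_n$, the form $\langle P_m,P_n\rangle=h_n\delta_{mn}$, checking self-adjointness of multiplication by $z$, and setting $M(f)=\langle f,1\rangle$; then tracking reality and positivity for part (ii)) is exactly the standard proof given in that reference. The only spot to write a bit more carefully is the final induction: the clean statement to prove is $\langle fg,h\rangle=\langle f,gh\rangle$ for all $f,h$ by induction on $\deg g$ (the naive induction with $h=1$ produces terms like $\langle f\tilde g,z\rangle$ rather than $\langle f,z\tilde g\rangle$), after which $M(P_iP_j)=h_i\delta_{ij}$ follows at once.
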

Since all $P_n$ are monic, we have $P_0=1$. Also, 
we take $P_{-1}= 0$ by definition, and thus for $n=0$ \eqref{=3t} reads $z=P_{1}+\alp_0$.
In these terms we can now formulate our next result, but first we will need some notation. Given a rational function $f$ in two variables, 
define
\begin{equation}\label{=f12}
f_1(u):=f(u+1,u)^{-1} \text{ and } f_2(u):=f(u+2,u)^{-1}f_1(u+1).
\end{equation}

\begin{introprop}[Appendix \ref{subsec:Jacrat}]\label{th:JacAlp}
Let $\{P_n\}_{n=0}^{\infty}$ be an HG family given by a pair
 $(R,f)$. Let $f_1$ and $f_2$ be as in \eqref{=f12}, and define rational functions  in  $\alp$ and $\beta$ in one variable $u$ by
\begin{align*}
\alp(u)=f_1(u-1)-f_1(u)-R(u), \quad 
\beta(u)=f_2(u-1)-f_2(u)-f_1(u)\left(\alp(u+1)+R(u)\right)
\end{align*}
Suppose that $f_1$ and $f_2$ have no poles in non-negative integers, and that $(R,f)$ is as in one the cases (\ref{it:W}-\ref{it:F})
of  Theorem \ref{thm:main}.  Then the family $\{P_n\}_{n=0}^{\infty}$ satisfies the recursion \eqref{=3t} with sequences $\alp_n$ and $\beta_n$ given by  $\alpha_n=\alpha(n)$ and $\beta_n=\beta(n)$ for all $n>0$, and 
 $\alpha_0=\alp(0)-f_1(-1)$, $\beta_0=\beta(0)-f_2(-1)$.
\end{introprop}

The function $f_1$ may have a pole at $-1$. However, \DimaN{in this case $\alp$ also has a pole at $0$, and our expression for $\alp_0$ should be interpreted as $\alp_0=-f_1(0)-R(0)$, which is well-defined}. A similar remark is in place for $\beta_0$.

Theorem \ref{thm:Fav} and Proposition \ref{th:JacAlp} imply the following corollary.

\begin{introcor}\label{cor:ab} Let $\cP=\{P_n\}_{n=0}^{\infty}$ be an HG family given by a pair
 $(R,f)$. Suppose that $(R,f)$ is as in one of the cases (\ref{it:W}-\ref{it:F}) of  Theorem \ref{thm:main}.  Define sequences $\{\alp_n\},\{\beta_n\}$ as in Proposition \ref{th:JacAlp}. Then
\begin{enumerate}[(i)]

\item \label{it:abqort}  The family $\cP$ is well-defined and quasi-orthogonal if and only if for  all integer $n\geq 0$ we have that $f_1(n)$ and $f_2(n)$ are finite, and in addition
$\beta_n\neq 0$. 

\item The family $\cP$ is well-defined and orthogonal if and only if  for  all integer $n\geq 0$ we have that $f_1(n)$ and $f_2(n)$ are finite, and in addition $\alpha_n,\beta_n\in \R$ and $\beta_n>0$.

\item \label{it:abchoice} Two choices of $(R,f)$ in Theorem \ref{thm:main} lead to the same family $\calP$ if and only if they yield the same values for $\alpha_n,\beta_n$  for all $n\geq 0$.
\end{enumerate}
\end{introcor}

In Theorem \ref{thm:Classmain} below we define all quasi-orthogonal HG families explicitly in terms of hypergeometric functions, and also give an explicit description of all possible values of the parameters.

\subsubsection{Results  on rational HG families}

\begin{introtheorem}[See \S \ref{sec:PfMain} below]\label{thm:rat}
Let $\calP$  be a quasi-orthogonal rational HG family. Then
there exist   $g(u,s)\in \C(s)[u]$, $f\in \C(u,s)$, and $R\in \C[s]$ such that $\cP$ is given by  $(R(s),f(u,s)g(u,s+1)/g(u,s))$, 
and  one of the following holds.

\begin{enumerate}[(a)]
\item \label{it:rat:a} The pair $(R,f)$ satisfies the conditions of \eqref{it:W} in Theorem \ref{thm:main}.
\item \label{it:rat:b}$R$ is a constant, and 
$$f(u,s)=\frac{s-u}{s+1}\cdot\frac{(s-u-b)q(s+u)}{s+d},$$
for some  quadratic polynomial $q\in \C[t]$ and scalars  $b,d\in \C$.
\end{enumerate}
\end{introtheorem}

This theorem leaves two questions open. One is - what are all the possible constants $b,d$ and quadratic polynomials $q\in \C[t]$ in case \eqref{it:rat:b}?
In all the examples we know, $f$ satisfies the conditions of \eqref{it:E} or \eqref{it:F} of Theorem \ref{thm:main}. 

The second question is: given a pair $(R,f)$ as in \eqref{it:W}-\eqref{it:F} of Theorem \ref{thm:main}, for what $g\in \C(s)[u]$ the rational HG family defined by the pair $(R(s),f(u,s)g(u,s+1)/g(u,s))$ is also quasi-orthogonal?  We should remark that such examples are rare, and we describe some in Appendix \ref{app:gauge}. 

In particular, we construct a quasi-orthogonal two-parameter rational HG-family $\calP^{(c,\lam)}$   that interpolates between the families $E^{(c+1)}$ and $F^{(c)}$, and is not an HG family for $\lam\notin\{0,1\}$. We also construct a two-parameter quasi-orthogonal rational HG family  that interpolates between two families of Jacobi polynomials, and give a subset of parameters for which the family is orthogonal and is given by a hypergeometric function. 
Let us describe this one-parameter subfamily $R_n^{\lam}$. 
Let $Q_n$ be the Jacobi family with $a=1, b=3/2$, and  $g(u,s):=\lam (u+1/2) +s +1/2$, where $\lam\notin \{-1,1\}$. Then 
\begin{equation}
R^{\lam}_n:=g(n,z\partial_z)Q_n=c_n \,_3F_2(-n,n+1,\lam n + \frac{\lam +3}{2}; 3/2,\lam n + \frac{\lam +1}{2} ;z),
\end{equation}
where $c_n=(\lam+1)(n+1/2)$. See \eqref{=pFq} below for the notation $\,_3F_2$.
This is a rational HG-family, it is orthogonal, and satisfies a third order differential equation. 
We also construct another rational HG-family obtained that interpolates between two continuous Hahn families.
For further details on this family we refer the reader to \S \ref{subsec:diff}.

\subsection{Background and related results}\label{subsec:backgr}
We refer the reader to \cite{WW, Wat} for the classical theory \DimaN{of hypergeometric orthogonal polynomials} and to \cite{BP_Book, AAR, GR, Ism, KWKS, KLS} for some more recent developments.

The classification problem for such polynomials has a long history. It is typically framed through the study of polynomial eigenfunctions associated with a second-order differential or difference operator $L$. In this framework, their orthogonality arises naturally from the self-adjointness of $L$, consistent with classical Sturm-Liouville theory. The earliest work in this direction are due to Bochner \cite{Boch} and Hahn \cite{Hahn}, and these have been generalized considerably over the years by a number of authors, see for example \cite{Al, GH, VZ, VZ-N, V-S, Koo}.  

Some of the key results are summarized in \cite{Ism, KLS}. For example, the discussion in \cite{KLS} considers 10 distinct classes of operators. In each instance, once the general form of $L$ is fixed, the classification becomes a tame problem with finitely many parameters, and proceeding in this manner one eventually obtains a total of 44 families. The explicit form of the various $L$ implies that all 44 families are ``hypergeometric'' in nature, which means that they can be obtained by a suitable specialization of some hypergeometric series  $\,_pF_q$ or its $q$-analog $\,_r\phi_s$.  These families were organized by R.~Askey into a hypergeometric hierarchy, now known as the \emph{Askey scheme} \cite{KLS}.

This prompts the natural question of whether more intricate operators might yield additional hypergeometric families. More generally, one might seek to classify all hypergeometric and $q$-hypergeometric orthogonal families. In this paper, we classify a large class of orthogonal families, which {\it a priori} includes all families that might arise by a suitable specialization of hypergeometric series. Notably, we do not assume the former or even the existence— of a corresponding operator $L$, and in fact we obtain two new families, which are not in the Askey scheme. The $q$-hypergeometric setting will be addressed in a subsequent paper.

\subsection{Relation to classical families}\label{subsec:class}
Recall the definition of the hypergeometric series
\begin{equation} \label{=pFq}
_pF_q(\underline{a};\underline{b};z)
=\sum_{k\ge 0}
\frac{(a_1)_k \cdots (a_p)_k}{(b_1)_k \cdots (b_q)_k k!} z^k,
\quad (c)_k:=c(c+1)\cdots (c+k-1),
\end{equation}
and note that if $a_1=-n$ then the infinite series \eqref{=pFq} truncates to give a polynomial of degree $\le n$.
Theorem \ref{thm:main} implies that every quasi-orthogonal HG-family can be described using this type of special case of hypergeometric series. More precisely, we have the following theorem.

\begin{introtheorem}[See \S \ref{sec:PfClassmain} below]\label{thm:Classmain}
An HG family  $\{P_n\}_{n=0}^{\infty}$ is quasi-orthogonal if and only if it  arises by a \emph{rescaling} $P_n(z)\mapsto P_n(\lam z)$ and/or a \emph{renormalization} $P_n (z)\mapsto \lam_nP_n(z)$ and/or \emph{shift} $P_n(z)\mapsto P_n(z+\lam)$ from one of the hypergeometric series listed below. In these series, the parameters $a,b,c,d$ are complex numbers.
\begin{enumerate}[(a)]
\item \label{it:AJac} Jacobi: $_2F_1(-n,n+a;b;z)$ \quad with: $a,b-1,a-b \notin \Z_{< 0}$;\quad  
\item \label{it:Bes} Bessel: $_2F_0(-n,n+a;;z)$ \quad  with $a\notin \Z_{< 0}$;
\item \label{it:ALag} Laguerre: $_1F_1(-n;b;z)$ \quad  with $b\notin \Z_{\leq 0}$;

\item \label{it:AWil} Wilson: $P_n(z^2)=\,_4F_3(-n,n+a+b+c+d-1,z,2a-z; a+b, a+c,a+d;1)$\\ with $a+b,\,a+c,\,a+d,b+c,b+d,c+d,\,a+b+c+d\notin \Z_{\leq 0}$;
\item \label{it:AcdH} Continuous dual Hahn: $P_n(z^2)=\,_3F_2(-n,z,2a-z; a+b, a+c;1)$ with $a+b,a+c,b+c\notin\Z_{\leq 0}$;
\item \label{it:AcH} Continuous Hahn: $P_n(z)=\,_3F_2(-n,n+b+c+d-1, z; c, d;1)$\\
with $c,d,b+c,b+d,c+d,b+c+d\notin \Z_{\leq 0}$;
\item \label{it:AMei} Meixner:  $P_n(z)=\,_2F_1(-n, z; b;1-1/c)$ with $c\notin \{0,1\}, b\notin \Z_{\leq 0}$;
\item \label{it:ACha} Charlier: $P_n(z)=\,_2F_0(-n, z; -;-1/a)$ with $a\neq 0$;

\item \label{it:AE} $E_n^{(c)}$:  $_4F_1(-n,-n-c+1,n+c,n+1; 1/2;z)$\quad  with  $c\notin \Z_{\le 0}$;
\item \label{it:AF} $F_n^{(c)}$: $_4F_1(-n,-n-c+1,n+c+1,n+2; 3/2;z)$\quad  with $c\notin \Z_{\leq0}$;
\end{enumerate}
\end{introtheorem}

Here, cases (\ref{it:AJac}-\ref{it:ACha}) correspond to case \eqref{it:W} of Theorem \ref{thm:main}, and cases (\ref{it:AE},\ref{it:AF}) correspond to cases (\ref{it:E},\ref{it:F}) of Theorem \ref{thm:main}. 
The following table describes how case \eqref{it:W} of Theorem \ref{thm:main} decomposes to cases (\ref{it:AJac}-\ref{it:ACha})  of Theorem \ref{thm:Classmain}, depending on the degrees of $R,q$, and $v$, where we take $\deg(0)=-\infty$:

\begin{equation}\label{tab:decomp}
\begin{tabular}{c c c c} 
\hline 
{\normalsize family}  &  $\deg q$ &$\deg v$ & $\deg R$ \\  
\hline 

{\normalsize Jacobi} &  1 &1 & $\leq 0$   \\   \hline

{\normalsize Bessel} &  1 &0 &  $\leq 0$   \\   \hline
{\normalsize Laguerre}  &  0 & 1 & $\leq 0$\\ \hline
{\normalsize Wilson} &  1 &$\leq 1$ & 2   \\   \hline
{\normalsize Continuous dual Hahn} &  0 &$\leq 1$ & 2   \\   \hline

{\normalsize Continuous Hahn} &  1 &$\leq 1$ & 1   \\   \hline

{\normalsize Meixner} &  0 &1 & 1   \\   \hline
{\normalsize Charlier} &  0 &0 & 1   \\   \hline

\end{tabular}
\end{equation}


\begin{remark}\label{rem:class}
Let us explain how this table relates to the part of the Askey scheme given by the hypergeometric series (rather than basic hypergeometric series). This scheme includes the families in the table, and in addition the Hermite family and finite families. 
All the finite families are obtained by substituting in one of our families special values of parameters that are not allowed in the theorem. 

Let us remark that classically the Bessel family is viewed as a finite family, since it is not orthogonal  for any parameter $a$. However, it is quasi-orthogonal for all $a\notin\Z_{< 0}$. 

The Hermite polynomials are alternatively even and odd. Thus one can view them as a combination of two families, each of which happens to be a Laguerre family. In this sense this family is covered, though the full Hermite family does not satisfy our HG condition.

\end{remark}


\subsection{Extension to other base fields}

Our definitions of HG families and quasi-orthogonal families make sense over every field $F$.
If $F$ has characteristic 0 then our results hold for it, as the following theorem states.
\begin{introthm}[\S \ref{sec:allFields}]\label{thm:allFields}
Theorems \ref{thm:main}, \ref{thm:rat}, and \ref{thm:Classmain}, Proposition \ref{th:JacAlp}, and parts \eqref{it:abqort} and \eqref{it:abchoice} of Corollary \ref{cor:ab} hold over every field $F$ of characteristic 0.
\end{introthm}

\subsection{Main ideas of our proofs}

The technical heart of our paper consists of obtaining \emph{upper bounds} for the degrees of $R(s)$ and $f(u, s)$ for every family of HG type, which we achieve in two steps. First, to each family we associate a module for a certain algebra $A$, which is an extension of the field $K=\C(u,s)$ by the shift operators $u \mapsto u\pm1$ and $s \mapsto s \pm 1$. By construction this module has $K$-dimension at most two, but we show that it is, in fact, one-dimensional.

One-dimensional $A$-modules have been classified in \cite{Ore}. 
He provided a very explicit model for every such module - each is isomorphic to the module generated by a meromorphic function of a very special form.
Applying this result to our setting we deduce that $f(u,s)$ must factor completely into linear factors of a very special kind. 
Then we use the three-term recursion to bound the number of factors, and thus obtain the desired degree bounds. 

The bounds imply that the families of HG type depend on finitely many parameters, namely, the (finitely many) coefficients of $R(s)$ and  $f(u,s)$. In other words, the classification problem is \emph{tame}, and we proceed to carry it out explicitly.

\subsection{Structure of the paper and the scheme of the proofs}

In \S \ref{sec:prel} we describe the classification of one-dimensional $A$-modules from \cite{Ore} (see Theorem \ref{thm:mod} below).

In \S \ref{sec:PolMod} we construct an  $A$-module from the coefficient function $c(n,k)$ and prove that it is one-dimensional. \DimaK{We first state that $f$ determines the family $P_n$, and that  the sequences $\alp$ and $\beta$ coincide with rational functions, except possibly for finitely many $n$. 
Then we consider} the space of all $\C$-valued functions on $\Z^2$ as a module over $\C[u,s]$, and extend scalars to obtain a vector space over the field of rational functions $K=\C(u,s)$.
This vector space has a natural structure of an  $A$-module.
Then we consider the submodule $M$ generated by \DimaG{the image of $c(n,k)$, that we denote by $w$}. By construction we have $S^{-1}w=(S^{-1}f)w$. By the Gauss-Favard theorem (Theorem \ref{thm:Fav}) we have the three-term recursion
\begin{equation}\label{=3tI}
zP_n=P_{n+1}+\alp_nP_n+\beta_{n-1}{P_{n-1}}  \,\,\forall  n\in \Z_{\ge 0}
\end{equation}
It gives a linear relation on $S^{-1}w, \, Uw, \, w,$ and $U^{-1}w$ over $K$. Using this relation and the commutation $US=SU$ we prove in Theorem \ref{thm:1dim} below that $\dim_K M=1$.

In \S \ref{sec:3t} we use the explicit realization from Theorem \ref{thm:mod} to almost classify all pairs $M,v$, where $M$ is a one-dimensional $A$-module, and $v$ is a vector satisfying a relation of the form \eqref{=3tI}. Namely, we realize $v$ as a function $$\Phi=\exp(cu+ds)g(u,s)\prod \Gamma(k_iu+l_is+c_i),$$ where $g\in K=\C(u,s)$, satisfying
\begin{equation}\label{=3PhiIntro}
S^{-1}\Phi = U\Phi +\alp \Phi + \beta U^{-1}\Phi.
\end{equation}
For every $i$, $\Phi$ has an infinite family of poles in the lines $k_iu+l_is+c_i = n$ for $n\in \Z_{\geq 0}$. Any of the poles of one of the terms in \eqref{=3PhiIntro} has to be a pole of at least one other term. From this we deduce that $k_i,l_i\in \{-1,0,1\}$. Further analyzing the poles we obtain that there exist polynomials $p,q,w\in \C[t]$ and $g\in \C[u,s]$ such that
\begin{equation}\label{=fIntro}
f=\frac{S\Phi}{\Phi}=\frac{q(u+s)p(s-u)}{w(s)}\cdot \frac{Sg}{g}
\end{equation}

In \S \ref{sec:cases} we use elementary algebraic considerations to deduce from \eqref{=3PhiIntro} strong restrictions on $p,q$, and $w$ (see Propositions \ref{prop:Jac} and \ref{prop:cases}, and Corollary \ref{cor:pqDet}). In particular, we show that $\dim p, \dim q,\dim w\leq 2$.

\DimaK{In \S \ref{sec:PfRat} we deduce Theorem \ref{thm:rat} from \eqref{=fIntro} and Corollary \ref{cor:pqDet}. 

In \S \ref{sec:PfMain} we prove Theorem \ref{thm:main}, Proposition \ref{th:JacAlp} and Corollary \ref{cor:ab}.
We deduce Theorem \ref{thm:main} from  Theorem \ref{thm:rat} and Corollary \ref{cor:pqDet}. To do that we
 show that the condition that in HG-families $f$ depends polynomially on $u$ implies  that in \eqref{=fIntro} we can assume that $g=1$.

In \S \ref{sec:PfClassmain} we deduce Theorem \ref{thm:Classmain} from Theorem \ref{thm:main}. Most of the section is devoted to the computation of the exact areas of allowed parameters, namely those satisfying the conditions $N(n+2,n)\neq 0$, $N(n+1,n)\neq 0$, $D(k)\neq 0$ and $\beta_n\neq 0$.

In \S \ref{sec:new} we investigate the two new families $E_n^{(c)}$ and $F_n^{(c)}$,
\DimaC{express them through Lommel polynomials, and find the discrete measures on the real line with respect to which they are orthogonal. We also give the 4th order differential equations that they satisfy.}

In Appendix \ref{sec:allFields} we extend our results to all base fields of characteristic zero, in the following way. We first state that the results of \S \ref{sec:PolMod} hold over every field. These results reduce the problem to a statement on 1-dimensional modules with special elements. Every such pair of a module and an element involves only finitely many elements of the field, and thus is defined over a subfield of finite transcendence degree. This subfield then can be embedded into the field of complex numbers, and therefore the analysis of \S\S \ref{sec:3t}-\ref{sec:PfClassmain} holds for it.

\DimaD{ In Appendix \ref{sec:rat} we prove the technical lemmas from \S \ref{sec:PolMod}.} In particular, we prove that $f$ determines the family $P_n$, and that the sequences $\alp_n$ and $\beta_n$ coincide with rational functions, except possibly for finitely many $n$. In \S \ref{subsec:Jacrat} we prove Lemma \ref{lem:fxy} on HG-families.  

\DimaI{In Appendix \ref{app:gauge} we construct a two-parameter rational HG-family $\calP^{(c,\lam)}$ that interpolates between the families $E^{(c+1)}$ and $F^{(c)}$, and is not of an HG-family for $\lam\notin\{0,1\}$. We also construct a two-parameter family of rational type that interpolates between two families of Jacobi polynomials.

}
}

\section{Preliminaries}\label{sec:prel}
Recall the notation $K=\C(u,s)$ and
$A:=$ the subalgebra of $\End_\C(K)$ generated by $K$ and the shift operators  $U^{\pm 1}:u\mapsto u\pm1;\, S^{\pm1}:s\mapsto s\pm1$. The $S^{\pm 1},U^{\pm 1}$ act on $K$ by $(U^{\pm1}g)(u,s):=g(u\pm 1,s)$ and $(S^{\pm1}g)(u,s):=g(u,s\pm 1)$. We say that an $A$-module is \emph{one-dimensional} if it is one-dimensional as a vector space over $K$.

For every one-dimensional module $M$ and every non-zero vector $v\in M$ there exist unique $f,h\in K$ such that $Sv=fv$ and $Uv=hv$. It is easy to see that
\begin{equation}\label{=main}
Uf/f=Sh/h
\end{equation}
and that if we replace $v$ by $gv$ for some non-zero $g\in K$, the pair $(f,h)$ will change to $(fSg/g,hUg/g)$. We will call such pairs equivalent. Conversely, it is easy to see that every pair $(f,h)\in K^{\times}\times K^{\times}$ satisfying \eqref{=main} defines a one-dimensional $A$-module.
The set of isomorphism classes of one-dimensional $A$-modules forms a group under tensor product over $K$, where the $U$ and the $S$ act on a product by acting on both factors. This corresponds to elementwise products of pairs $(f,h)$. This group was computed in \cite{Ore}. To formulate this result, note that the field of meromorphic functions in $u,s$ has a natural $A$-module structure.
\begin{defn}
A meromorphic function $\eta$ in two variables $u,s$ is of $\gamma$ type if it is a product of the form  \begin{equation}\label{=eta}
\exp(au+bs)\prod_{i=1}^n \Gamma(k_i u+l_i s+c_i),\quad a,b,c_i \in \C, \; (k_i,l_i) \text{ coprime in } \Z^2,
\end{equation}
where $\Re c_i\in [0,1)\, \forall i$,  and if $(k_i,l_i)=(-k_j,-l_j)$ then $c_i\neq 1-c_j$.
\end{defn}

It is easy to see that for every $\eta$ of $\gamma$ type, the space $K\eta$ spanned by it is invariant under $U^{\pm 1}$ and $S^{\pm 1}$, and thus is an $A$-module.

\begin{theorem}[\DimaB{\cite{Ore}, see also \cite[Proposition 1.7]{Sab}}]\label{thm:mod} The correspondence $\eta\mapsto K\eta$ is a bijection between the set of functions of $\gamma$ type and the set of isomorphism classes of 1-dimensional $A$-modules.
\end{theorem}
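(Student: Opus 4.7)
\emph{Plan.} The plan is to recognize the set of isomorphism classes of one-dimensional $A$-modules as the first group cohomology $H^1(\Z^2, K^\times)$, where $\Z^2 = \langle S, U\rangle$ acts on $K^\times$ by shifts, and then to match this group with the parametrization by $\gamma$-type functions. Choosing a generator $v$ of a one-dimensional module $M$ yields a pair $(f, h) \in (K^\times)^2$ with $Sv = fv$ and $Uv = hv$, satisfying the compatibility $Uf/f = Sh/h$ (forced by $SU = US$); the pair is well defined modulo the equivalence $(f, h) \sim (f \cdot Sg/g,\; h \cdot Ug/g)$ for $g \in K^\times$. This is exactly the $1$-cocycle data for $H^1(\Z^2, K^\times)$. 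For $\eta$ of $\gamma$-type, well-definedness of $K\eta$ as a one-dimensional $A$-module follows from the identity $\Gamma(x+n)/\Gamma(x) = \prod_{j=0}^{n-1}(x+j)$ for $n \in \Z_{\geq 0}$ (and its reciprocal for $n<0$), which ensures $S\eta/\eta,\, U\eta/\eta \in K^\times$.

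\emph{Surjectivity.} The strategy is to compute $H^1(\Z^2, K^\times)$ via the short exact sequence $1 \to \C^\times \to K^\times \to \operatorname{Div} \to 1$ of $\Z^2$-modules, where $\operatorname{Div}$ is the free abelian group on monic irreducibles in $\C[u,s]$. Decompose $\operatorname{Div}$ into its $\Z^2$-orbits. The stabilizer $\Stab_{\Z^2}(p)$ of a monic irreducible $p$ is trivial unless $p$ has the form $ku + ls + c$ with $(k, l)$ coprime in $\Z^2$, in which case $\Stab$ is infinite cyclic, generated by $(-l, k)$. By Shapiro's lemma, each non-linear orbit contributes $H^1(0,\Z)=0$, so its divisor can be absorbed into a rational gauge $g$; each linear orbit contributes $H^1(\Z,\Z)=\Z$, whose generator is realized by a single $\Gamma$-factor $\Gamma(ku + ls + c)$, with the integer multiplicity recorded by the number of times the factor appears (negative multiplicities being realized via reflection $(k,l,c)\leftrightarrow (-k,-l,1-c)$). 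The residual $H^1(\Z^2, \C^\times) = (\C^\times)^2$ is realized by the exponential $\exp(au + bs)$, since $S\exp(au+bs)/\exp(au+bs) = e^b$ and $U\exp(au+bs)/\exp(au+bs) = e^a$.

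\emph{Injectivity and main obstacle.} The normalization conditions in the $\gamma$-type definition are designed precisely to remove the remaining ambiguity in choosing a representative of each cohomology class. The constraint $\Re c_i \in [0,1)$ selects a unique representative of the coset $c_i + \Z$, since shifting $c_i$ by an integer alters the $\Gamma$-factor only by a rational function, which can be absorbed into the gauge $g$. The condition $c_i \neq 1 - c_j$ when $(k_i, l_i) = (-k_j, -l_j)$ removes the redundancy arising from the reflection identity $\Gamma(x)\Gamma(1-x) = \pi/\sin(\pi x)$. The hardest part will be to handle this reflection identity carefully: the function $\pi/\sin(\pi x)$ is meromorphic and $\Z^2$-invariant up to sign, but it is \emph{not} rational, so multiplying $\eta$ by it does alter the equivalence class nontrivially—hence the two $\gamma$-type presentations that differ by such a factor genuinely index different classes unless one of them is explicitly forbidden. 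Once this reflection redundancy and the integer-shift ambiguity in the $c_i$ are both eliminated, the classification in the surjectivity step yields a bijection between $\gamma$-type data and $H^1(\Z^2, K^\times)$, completing the proof.
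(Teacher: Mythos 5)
You should first note that the paper does not prove Theorem \ref{thm:mod} at all: it is quoted from \cite{Ore} (see also \cite[Proposition 1.7]{Sab}), and the only in-paper content is the remark reconciling the slightly different normalizations. So there is no internal proof to compare with; your cohomological framing -- isomorphism classes of one-dimensional $A$-modules as $H^1(\Z^2,K^\times)$ via the cocycle data $(f,h)$ with $Uf/f=Sh/h$ modulo $(Sg/g,\,Ug/g)$, computed through $1\to\C^\times\to K^\times\to\operatorname{Div}\to1$ -- is a legitimate modern route to Ore's theorem, and the stabilizer analysis you invoke is exactly the paper's Proposition \ref{prop:inv}.

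The genuine gap is injectivity, which you explicitly defer ("the hardest part") and never carry out, and it is the crux. Two $\gamma$-type functions $\eta_1,\eta_2$ give isomorphic modules iff $\eta_1=g\,\phi\,\eta_2$ with $g\in K^\times$ and $\phi$ meromorphic satisfying $S\phi=\phi$ and $U\phi=\phi$; the ambiguity is thus the entire field of meromorphic functions of period $1$ in both variables, not just the single reflection factor $\pi/\sin(\pi x)$, and ruling out a nonconstant $\phi$ requires an argument about zeros and poles of Gamma products (no zeros, poles exactly along $k_iu+l_is+c_i\in\Z_{\leq 0}$) that your sketch never supplies. Moreover your assertion that the stated normalization "removes the remaining ambiguity" is not correct as written: $\Gamma(u+c)\Gamma(-u-c)=-\pi/\bigl((u+c)\sin\pi(u+c)\bigr)$ becomes, after the rational gauge $g=1/(u+c)$, a cocycle with constant multipliers $(1,-1)$, hence $K\,\Gamma(u+c)\Gamma(-u-c)\cong K\exp(i\pi u)$; for $\Re c=0$ (e.g.\ $c=0$) both presentations satisfy $\Re c_i\in[0,1)$ and $c_i\neq 1-c_j$, so the normalization you rely on does not separate them -- the condition actually needed for opposite directions is $c_i+c_j\notin\Z$, excluding $c_i=-c_j$ as well as $c_i=1-c_j$. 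Any correct proof must confront exactly this point. Secondary, repairable issues: Shapiro's lemma as you invoke it applies to coinduced modules, whereas $\operatorname{Div}$ decomposes into induced modules $\Z[\Z^2/H]$ of infinite index, so you should instead compute $H^1(\Z^2,\Z[\Z^2])=0$ and $H^1(\Z^2,\Z[\Z^2/H])\cong\Z$ directly (e.g.\ via the Koszul complex for $\Z^2$), justify that $H^1$ commutes with the orbit decomposition, and use $H^0(\Z^2,\operatorname{Div})=0$ together with the explicit Gamma-factor lifts to conclude surjectivity from the long exact sequence.
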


\begin{rem}
\begin{enumerate}[(i)]
\item \DimaF{The formulations of the theorem in \cite{Ore} and in \cite[Proposition 1.7]{Sab} are slightly different from ours. For example, in \cite[Proposition 1.7]{Sab} negative powers of $\Gamma(k_i u+l_i s+c_i)$ are allowed. On the other hand, \cite[Proposition 1.7]{Sab} reduces the set of allowed pairs $(k_i,l_i)$ to include exactly one of $\{(k,l),(-k,-l)\}$ for any coprime $(k,l)\in \Z^2$. The equivalence of the two formulations follows from the Euler's reflection formula:
\begin{equation}\label{=Eu}
\Gamma(x)\Gamma(1-x)=\frac{\pi}{\sin(\pi x)}=\frac{2\pi i}{1-\exp(-2\pi i x)}\exp(-i\pi x)=\phi(x) \exp(-i\pi x),
\end{equation}
where $\phi$ is a periodic function with period 1. Thus multiplication by $\phi(ku+ls+c)\exp(-i \pi c)$ defines an isomorphism between $\langle \Gamma(ku+ls+c)^{-1}\rangle$ and $\langle \Gamma(-ku-ls+1-c)\exp(i\pi ku+i\pi ks) \rangle$.}

\item The Euler's reflection formula \eqref{=Eu} allows to define a group structure on the set of functions of $\gamma$ type by defining the product of two such functions $\eta$ and $\xi$ to be the only function $\theta$ of $\gamma$ type such that the function $\phi(u,s):=\eta(u,s)\xi(u,s)/\theta(u,s)$ satisfies $S\phi=U\phi=1$. Under this group structure, the bijection $\eta\mapsto K\eta$ becomes a group isomorphism.
\end{enumerate}
\end{rem}

\section{From quasi-orthogonal rational HG families  to 1-dimensional $A$-modules }\label{sec:PolMod}

Let $\{P_n\}$ be a rational HG family defined by a pair $(R,f)$ where $R\in \C[s]$ is a polynomial, and 
\begin{equation}
f(u,s):=\frac{N(u,s)}{D(u,s)}\in \C(u,s)
\end{equation}
is a rational function. Assume that the family  $\{P_n\}$  is quasi-orthogonal.

Recall that $R$ defines a Newtonian basis  $\{\Phi_k\}$ by the recurrence relation
\begin{equation}\label{=Phi2}
\Phi_{k+1}=z\Phi_k+R(k)\Phi_k, \quad \Phi_{-1}=0,\,\,\Phi_{0}=1.
\end{equation}
Then $\Phi_k$ is a monic polynomial of degree $k$, and therefore $\{\Phi_k\}$ is a basis to the space of polynomials.
Denote by $c(n,k)$ the coefficients of $P_n(z)$ in the basis $\Phi_k$. 
Note that $c(n,k)$ is a two-parameter sequence satisfying $c(n,k)=0$\ unless $n\geq k\geq 0$. By definition of a rational HG -family, we have 
\begin{equation}\label{=hyp0}
c(n, k + 1) D(n,k) = c(n, k) N (n,k)
\end{equation}
A priori, it is possible that for some $n\geq k\geq 0$ we have $N(n,k)=D(n,k)=0$.
Since $D$ and $N$ are co-prime, the set $X=\{(u,s)\in \C^2\, \vert \, N(u,s)=D(u,s)\}$ is finite.  
 For $(n,k)\in \Z^2\cap X$, \eqref{=hyp0} puts no restriction on $c(n,k)$.
 Other vanishings of $N(n,k)$ also limit the information \eqref{=hyp0} gives. However, the following lemma says that \eqref{=hyp0}, together with the condition that the family is quasi-orthogonal, uniquely defines the family. 




\begin{lem}\label{lem:RDNdef}
The polynomials $R(s), D(u,s),N(u,s)$ define the monic quasi-orthogonal family uniquely.
\end{lem}

We postpone the proof of this lemma, as most lemmas in this section, to Appendix \ref{sec:rat}.


\begin{lem}[Appendix \ref{sec:rat}]\label{lem:fxyRat}
There exist polynomials $x,y\in \C[u,s]$  such that 
\begin{equation}
N(u,s)=(s-u)x(u,s),  \quad D(u,s)=(s+1)y(u,s).
\end{equation}
\end{lem}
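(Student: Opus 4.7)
The plan is to derive each divisibility by specializing the defining relation \eqref{=hyp0} to one ``boundary'' value of $k$, and then exploiting the vanishing pattern of $c(n,k)$ outside the range $0\le k\le n$ together with Favard's theorem. The quotient polynomials $x$ and $y$ are then produced simply as $x=N/(u-s)$ and $y=D/(s+1)$ in $\C[u,s]$.

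For the factor $(u-s)$ of $N$, I would specialize \eqref{=hyp0} to $k=n$. Since $P_n$ is monic of degree $n$, we have $c(n,n)=1$ and $c(n,n+1)=0$, so \eqref{=hyp0} collapses to $0=N(n,n)$ for every $n\in\Z_{\ge 0}$. Hence the one-variable polynomial $N(u,u)\in\C[u]$ has infinitely many roots and therefore vanishes identically. This forces $(u-s)\mid N(u,s)$ in $\C[u,s]$ (equivalently, $N$ maps to zero under $\C[u,s]\twoheadrightarrow\C[u,s]/(u-s)\cong\C[u]$), giving the desired $x$.

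For the factor $(s+1)$ of $D$, I would specialize \eqref{=hyp0} to $k=-1$. Since $c(n,-1)=0$ the relation becomes $c(n,0)\,D(n,-1)=0$, so it suffices to show that $c(n,0)\ne 0$ for infinitely many $n$: then $D(u,-1)\in\C[u]$ vanishes at infinitely many points, hence identically, and so $(s+1)\mid D(u,s)$.

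The main (and essentially only) obstacle is this non-vanishing of the constant terms, and I would settle it via Favard's theorem (Theorem \ref{thm:Fav}), which applies because any family of rational type is by definition quasi-orthogonal. Evaluating the three-term recurrence \eqref{=3t} at $z=0$ yields
\[
c(n+1,0)+\alp_n\,c(n,0)+\bt_n\,c(n-1,0)=0 \qquad (n\ge 1),
\]
with $\bt_n\ne 0$. If $c(n,0)$ vanished for all sufficiently large $n$, a descending induction (solving for $c(n-1,0)$ using $\bt_n\ne 0$) would force $c(n,0)=0$ for every $n\ge 0$, contradicting $c(0,0)=P_0=1$. Hence $c(n,0)\ne 0$ for infinitely many $n$, which completes the argument.
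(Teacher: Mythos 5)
Your proposal is correct and follows essentially the same route as the paper: substituting $k=n$ to get $N(n,n)=0$ and hence $(u-s)\mid N$, then substituting $k=-1$ and using the three-term recursion at $z=0$ with $\beta_n\neq 0$ to rule out $c(n,0)$ vanishing for all large $n$, so that $D(u,-1)\equiv 0$ and $(s+1)\mid D$. The only cosmetic difference is where the contradiction lands (you descend all the way to $c(0,0)=1$, while the paper stops at the maximum of the finite set where $c(n,0)\neq 0$), which is immaterial.
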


By the Gauss-Favard theorem (Theorem \ref{thm:Fav} above), there exist (unique) sequences $\{\alp_n\}_{n=1}^{\infty}$ and $\{\beta_n\}_{n=1}^{\infty}$ of complex numbers such that $\beta_n\neq 0$ for all $n\in \Z_{\geq 0}$ and 
\begin{equation}\label{=3tA}
zP_n=P_{n+1}+\alp_nP_n+\beta_{n-1}{P_{n-1}}  \quad\forall  n\in \Z_{\ge 0}
\end{equation}
For $n=0$, $\beta_{-1}$ is not defined, but $P_{-1}=0$, so the equation reads $z=P_1+\alp_0$. 
By the definition of $\Phi_k$ (see \eqref{=Phi2}) we have $z\Phi_k(z)=\Phi_{k+1}-R(k)\Phi_k$.
Thus 
\begin{multline}
zP_n(z)=\sum_{k=0}^n c(n,k)z\Phi_k(z)=\sum_{k=0}^n c(n,k)\Phi_{k+1}(z)-\sum_{k=0}^n c(n,k)R(k)\Phi_{k}(z)=\\=\sum_{k=0}^n (c(n,k-1)-R(k)c(n,k))\Phi_{k}(z)
\end{multline}

Thus considering the coefficient of $\Phi_k$ in \eqref{=3tA} we obtain
\begin{equation}\label{=3tc}
c(n,k-1)=c(n+1,k)+(\alp_n+R(k)) c(n,k)+\beta_{n-1} c(n-1,k)
\end{equation}

\begin{lem}[Appendix \ref{sec:rat}]\label{lem:rat}
There exist rational  functions $\alp$ and $\beta$ such that $\alp(n)=\alp_n$ and $\beta(n)=\beta_n$, except possibly for finitely many $n$. 
\end{lem}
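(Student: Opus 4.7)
The plan is to extract $\alp_n$ and $\beta_n$ from the top coefficients of the polynomials appearing in the three-term recursion \eqref{=3tA}, and then to express those coefficients as explicit rational functions of $n$ by iterating the defining relation \eqref{=hyp0}.

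Concretely, I would compare the coefficients of $z^n$ and of $z^{n-1}$ on both sides of \eqref{=3tA}. Using that each $P_m$ is monic of degree $m$, this gives
\begin{align*}
\alp_n &= c(n,n-1) - c(n+1,n), \\
\beta_n &= c(n,n-2) - c(n+1,n-1) - \alp_n\,c(n,n-1),
\end{align*}
so it suffices to show that each of the four coefficients $c(n,n-1)$, $c(n,n-2)$, $c(n+1,n)$, $c(n+1,n-1)$ agrees, outside a finite set of $n$, with a rational function of $n$.

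For this, I would solve \eqref{=hyp0} for $c(n,k)$ in terms of $c(n,k+1)$ and iterate downward from the monic normalization $c(n,n) = 1$. For each fixed $j \geq 1$ this produces the closed form
\[
c(n,n-j) \;=\; \prod_{i=1}^{j} \frac{D(n,n-i)}{N(n,n-i)},
\]
and an analogous formula for $c(n+1,n-j+1)$, valid whenever the denominators are non-zero. The right-hand side is the evaluation at $u = n$ of a rational function in $u$ (obtained by substituting $s = u-i$, respectively $s = u-i+1$, into $D/N$), so substituting back into the formulas for $\alp_n$ and $\beta_n$ produces the required $\alp(u), \beta(u) \in \C(u)$.

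The main point to verify is that the iterative formula is valid for all but finitely many $n$; it can fail only when one of the denominators $N(n,n-i)$ or $N(n+1,n-i+1)$ with $i \in \{1, 2\}$ vanishes. I would check that the corresponding polynomials in $\C[u]$ are not identically zero: an identical vanishing of, say, $N(u,u-1)$ would force $s - u + 1 \mid N$, and then \eqref{=hyp0} at $k = n-1$ combined with $c(n,n) = 1$ would force $s - u + 1 \mid D$, contradicting coprimality; the case $i = 2$ is handled by the same type of argument, with the only mildly subtle subcase being the one in which an entire coefficient sequence $c(n,n-j)$ vanishes identically in $n$, so that the corresponding contribution to $\alp_n$ or $\beta_n$ is itself trivially rational. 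I expect this bookkeeping around the coprimality of $N$ and $D$ to be the only real technicality.
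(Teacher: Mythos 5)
Your generic-case argument is exactly the paper's easy half: comparing coefficients of $z^n$ and $z^{n-1}$ in \eqref{=3tA} gives the same two formulas for $\alp_n$ and $\beta_n$, and the coprimality argument correctly shows $N(u,u-1)\not\equiv 0$ (here you do need $c(n,n)=1$), so $c(n,n-1)=D(n,n-1)/N(n,n-1)$ and $\alp_n$ are almost rational. The gap is the case $i=2$, which is \emph{not} handled "by the same type of argument''. If $N(u,u-2)\equiv 0$, i.e. $u-s-2$ divides $N$, there is no contradiction with coprimality: relation \eqref{=hyp0} at $k=n-2$ only gives $c(n,n-1)D(n,n-2)=0$ for all $n$, hence (since $D(u,u-2)\not\equiv0$) that $c(n,n-1)=0$ for all but finitely many $n$ -- which is perfectly consistent. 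In this regime the relation at $k=n-2$ degenerates to $0=0$ and the downward iteration producing $c(n,n-2)=c(n,n-1)D(n,n-2)/N(n,n-2)$ breaks \emph{identically} in $n$, not just at finitely many points. Your fallback ("an entire coefficient sequence $c(n,n-j)$ vanishes identically, so its contribution is trivially rational'') misidentifies the difficulty: it is $c(n,n-1)$ that (almost) vanishes, while $c(n,n-2)$ and $c(n+1,n-1)$ -- which you still need for $\beta_n$ -- need not vanish and are simply left undetermined by the top rows of \eqref{=hyp0}.

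This degenerate case is precisely where the bulk of the paper's proof lives. There one first deduces $D(u,u-1)\equiv 0$, introduces the finite set of diagonals on which $D(u,u-l)\equiv 0$ and the set $Y$ of diagonals with $c(n,n-l)\overset{a}{=}0$, proves by a parity argument (using the coefficient recursion \eqref{=3tc} together with $\alp_n\overset{a}{=}0$ and, crucially, $\beta_n\neq 0$ from quasi-orthogonality) that all elements of $Y$ are odd, shows that $c(n,n-l-2)$ is eventually constant for $l\in Y$, and only then recovers $c(n,n-2)$ (and hence $\beta_n$) as an almost rational function by working \emph{upward} from a lower diagonal. Note that your argument never uses $\beta_n\neq 0$ beyond the existence of \eqref{=3tA}, whereas the paper's treatment of the degenerate case uses it essentially; this is a sign that the case cannot be dismissed by bookkeeping around coprimality. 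To repair your proof you would either have to rule out $u-s-2\mid N$ for rational-type families (which is not done and is not obviously true a priori) or supply an argument of the paper's kind for that case.
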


Recall the notation $K=\C(u,s)$ and
$A:=$ the subalgebra of $\End_\C(K)$ generated by $K$ and the shift operators  $U^{\pm 1}:u\mapsto u\pm1;\, S^{\pm1}:s\mapsto s\pm1$. 
 
\begin{prop}\label{prop:1dim}
Let $M$ be an $A$-module generated by a non-zero vector $v\in M$. Suppose that there exist $\gamma,\delta,\lam\in K$ such that $S\gamma=\gamma, S\delta\neq \delta, Sv=\delta v,$ and 
\begin{equation}\label{=U20}
U^2v = \lambda Uv-\gamma v
\end{equation}
Then $\dim_KM=1$.
\end{prop}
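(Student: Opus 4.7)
The plan is to show $V = Kv + KUv$ as a $K$-vector space, then rule out the possibility that $v$ and $Uv$ are $K$-linearly independent by using that $S$ and $U$ commute in $A$. The main (mild) obstacle is a purely algebraic lemma: a rational function in $K$ that is periodic in $u$ with period $2$ must lie in $\C(s)$.

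First I would reduce $V$ to at most two dimensions over $K$. Since $US = SU$ in $A$, every element of $V$ is a $K$-linear combination of elements $U^i S^j v$ with $i, j \in \Z$, and iterating $Sv = \delta v$ (and its inverse $S^{-1}v = (S^{-1}\delta)^{-1}v$) absorbs all powers of $S$ into scalars, leaving $V = \mathrm{span}_K\{U^i v : i\in\Z\}$. Applying $U^k$ to \eqref{=U20} gives the two-term recurrence
\[
U^{k+2}v = (U^k\lambda)\,U^{k+1}v - (U^k\beta)\,U^k v,
\]
which propagates forward from $\{v, Uv\}$; assuming $\beta \neq 0$ in $K$ (as holds in the application to \Cref{thm:1dim}, where $\beta$ arises from the nonvanishing Favard coefficient) the recursion is invertible and also propagates backward, so $V = Kv + KUv$.

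Next, suppose for contradiction that $v$ and $Uv$ are $K$-linearly independent. Applying $U$ and $U^2$ to $Sv = \delta v$ and using $US = SU$, I would observe that $Uv$ and $U^2 v$ are themselves $S$-eigenvectors with eigenvalues $U\delta$ and $U^2\delta$. Applying $S$ to \eqref{=U20} and invoking $S\beta = \beta$ then yields
\[
(U^2\delta)(\lambda Uv - \beta v) = (S\lambda)(U\delta)\,Uv - \beta\delta\,v.
\]
Comparing the coefficients of $v$ on the two sides under the independence hypothesis gives $\beta(U^2\delta - \delta) = 0$, so $U^2\delta = \delta$.

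Finally I would invoke the algebraic lemma: any $\delta \in K = \C(u,s)$ with $\delta(u+2,s) = \delta(u,s)$ satisfies $\delta \in \C(s)$. Writing $\delta = P(u)/Q(u)$ as a coprime fraction in $\C(s)[u]$, the periodicity identity $P(u+2)Q(u) = P(u)Q(u+2)$ combined with coprimality forces $P(u+2) = cP(u)$ for some $c \in \C(s)^\times$, and comparison of the two leading coefficients in $u$ forces $\deg_u P = 0$; the same argument gives $\deg_u Q = 0$. Hence $\delta \in \C(s)$ and $U\delta = \delta$, contradicting the hypothesis. Therefore $Uv \in Kv$, and together with $V = Kv + KUv$ this yields $\dim_K V = 1$.
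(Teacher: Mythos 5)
Your proof is correct and follows essentially the same route as the paper's: show $V$ is spanned over $K$ by $v$ and $Uv$, apply $S$ to $U^2v=\lambda Uv-\beta v$ and use $US=SU$, $Sv=\delta v$, $S\beta=\beta$, compare the coefficient of $v$ under the independence hypothesis to get $U^2\delta=\delta$, and derive the contradiction $U\delta=\delta$. You are in fact more explicit than the paper on the implication $U^2\delta=\delta\Rightarrow U\delta=\delta$ (your periodicity lemma) and on where $\beta\neq 0$ is needed; for the degenerate case $\beta=0$, which the statement formally allows, one can simply apply $U^{-1}$ to $U^2v=\lambda Uv$ to get $Uv\in Kv$ directly, so the caveat can be removed.
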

\begin{proof}
By \eqref{=U20}, $v$ and $Uv$ span $V$, and thus $\dim_K V\leq 2$. Suppose by way of contradiction that
 $\dim_KV=2$. 
Then $v$ and $Uv$  are linearly independent.
Applying $S$ to \eqref{=U20} and using $US=SU$, $Sv=\delta v$, and $S\gamma=\gamma$ we have
\begin{equation}\label{=SU2}
SU^2v=S(\lambda U v)-S(\gamma v)=S(\lambda)U(\delta v)-\gamma\delta v=S(\lambda)U(\delta)U(v)-\gamma\delta v
\end{equation}
We also have
\begin{equation}\label{=SU22}
SU^2v=U^2(Sv)=U^2(\delta v) = U^2(\delta) U^2v =U^2(\delta)\lambda Uv-U^2(\delta)\gamma v
\end{equation}

Thus
\begin{equation}\label{=vUv}
S(\lambda)U(\delta)U(v)-\gamma\delta v=U^2(\delta)\lambda Uv-U^2(\delta)\gamma v
\end{equation}
Since $v,Uv$ are linearly independent, comparing their coefficients we have
\begin{equation}\label{=coeff}
-\gamma\delta =-U^2(\delta)\gamma \text{ and }
S(\lambda)U(\delta)=U^2(\delta)\lambda
\end{equation}
The first equation implies
$
U^2(\delta)=\delta
$, and thus $U\delta=\delta$, contradicting the assumption.
\end{proof}

Let $L$ be the $\C[u,s]$-module of all $\C$-valued functions on $\Z^2$, with the action given by restriction of any polynomial in $\C[u,s]$ to $\Z^2$, by substitution of $n$ for $u$ and $k$ for $s$. Let $N$ be the extension of scalars $N:=L\otimes_{\C[u,s]}\C(u,s)$.
The $S^{\pm 1},U^{\pm 1}$ act on $K$ and on $L$ by $(U^{\pm1}g)(u,s):=g(u\pm 1,s)$ and $(S^{\pm1}g)(u,s):=g(u,s\pm 1)$.
Let $S^{\pm 1},U^{\pm 1}$ act on $N$ by $S^{\pm1}(l\otimes g):=S^{\pm1}l\otimes S^{\pm1}g$ and $U^{\pm1}(l\otimes g):=U^{\pm1}l\otimes U^{\pm1}g$. This defines a structure of an $A$-module on $N$.
Let $w$ denote the image of the function $c(n,k)$ in $N$ and let $M$ be the $A$-submodule of $N$ generated by $w$.
\begin{lem}[Appendix \ref{sec:rat}]\label{lem:non0Sw}
We have $w\neq 0$ and $Sw=fw$.
\end{lem}

\begin{thm}\label{thm:1dim}
$\dim_KM=1$.
\end{thm}

\begin{proof}
 
We have 
\begin{equation}
zP_n=\sum c(n,k)z\Phi_k=\sum c(n,k)(\Phi_{k+1}-R(k)\Phi_k)=\sum c(n,k)\Phi_{k+1}-\sum c(n,k)R(k)\Phi_k
\end{equation}

Thus we can rewrite \eqref{=3tA} as 
\begin{equation}
\sum c(n,k)\Phi_{k+1}-\sum c(n,k)R(k)\Phi_k=\sum c(n+1,k)\Phi_{k}+\alp_n\sum c(n,k)\Phi_{k}+\beta_{n-1}\sum c(n-1,k)\Phi_{k}
\end{equation}

 Looking at the coefficient of $\Phi_k$ in \eqref{=3tA} we have
\begin{equation}
(S^{-1}f)^{-1}c(n,k)=(S^{-1}c)(n,k)=Uc(n,k)+(\alp(n)+R(k))c(n,k)+\beta(n-1)U^{-1}c(n,k) \end{equation}
for almost every $n\geq 1$. This also trivially holds for every $n\leq -2$. Thus, by multiplying both sides of the equality by a suitable polynomial in $n$, we can arrange for it to  hold for all $n$. Therefore, in the $A$-module $M\sub N=L\otimes_{\mathbb{C}[u,s]}K$ we have
\begin{equation}\label{=3tM}
(S^{-1}f)^{-1}w=S^{-1}w=Uw+(\alp+R) w+ U^{-1}\beta U^{-1}w
\end{equation}

Define $v:=U^{-1}w$ and $\gamma:=U^{-1}\beta$. From \eqref{=3tM} we have
\begin{equation}
U^2v = \lambda Uv-\gamma v,
\end{equation}
where $\lambda=S^{-1}f^{-1}-\alp-R\in K$.
Since $U$ and $S$ commute,  
$$Sv=SU^{-1}w=U^{-1}Sw=U^{-1}(f w)=(U^{-1}f)U^{-1}w= (U^{-1}f)v.$$ In other words $Sv=\delta v$, where $\delta=U^{-1}f\in K$.
Note that $\gamma$ does not depend on $s$, and thus $S\gamma=\gamma$.
By Lemma \ref{lem:fxyRat}, $f$ has the term $s-u$. Thus $Uf\neq f$ and $U\delta\neq \delta$. The theorem follows now from Proposition \ref{prop:1dim}.
\end{proof}

\begin{cor}\label{cor:ch}
There exists a (unique) rational function $h\in \C(u,s)$  such that 
outside of the zero set of a polynomial $r(n,k)$ we have 
\begin{equation}\label{=hc}
c(n+1,k)=h(n,k)c(n,k).
\end{equation}

\end{cor}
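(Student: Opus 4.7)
The plan is to extract $h$ directly from Theorem \ref{thm:1dim}. Since $M$ is one-dimensional over $K$ by that theorem, and since $w \neq 0$ by Lemma \ref{lem:non0Sw}, the vector $w$ is a $K$-basis of $M$. As $Uw \in M$, there will be a unique $h \in K = \C(u,s)$ with $Uw = hw$; this is the rational function the corollary requires. Uniqueness as a rational function will follow from the injectivity of the map $K \to N$, $g \mapsto gw$: if $gw = 0$, clearing denominators and using the localization description of $N$ produces a nonzero polynomial annihilating $c(n,k)$ as a function on $\Z^2$, contradicting $w\neq 0$ in $N$.

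The remaining task is to convert the abstract equality $Uw = hw$ in the localized module $N = L \otimes_{\C[u,s]} K$ into the claimed pointwise identity on $\Z^2$. Writing $h = h_1/h_2$ with $h_1,h_2\in\C[u,s]$ coprime and $h_2\neq 0$, the identity $h_2\cdot Uw - h_1\cdot w = 0$ lies in the image of $L$ inside $N$, so by the definition of the tensor product over $\C[u,s]$ there exists a nonzero polynomial $r_0 \in \C[u,s]$ that kills this element already in $L$, i.e.
\[
r_0(n,k)\bigl(h_2(n,k)\, c(n+1,k) - h_1(n,k)\, c(n,k)\bigr) = 0 \quad\text{for all } (n,k)\in \Z^2.
\]
Setting $r := r_0 h_2$, one divides through by $r_0(n,k)h_2(n,k)$ whenever $r(n,k)\neq 0$ to obtain $c(n+1,k) = h(n,k)\, c(n,k)$, as required.

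No serious obstacle is expected: the one-dimensionality statement from Theorem \ref{thm:1dim} already does all the real work, and the rest is routine bookkeeping about the localization $L\otimes_{\C[u,s]}K$. The only small point to watch is that the exceptional locus in the corollary must also absorb the zero set of the denominator $h_2$, which is precisely why we define $r$ as the product $r_0 h_2$ rather than $r_0$ alone.
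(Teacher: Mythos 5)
Your proposal is correct and takes essentially the same route as the paper: the one-dimensionality of $M$ from Theorem \ref{thm:1dim} (together with $w\neq 0$ from Lemma \ref{lem:non0Sw}) gives a unique $h\in K$ with $Uw=hw$, and the localization structure of $N=L\otimes_{\C[u,s]}K$ turns this into the pointwise identity \eqref{=hc} outside the zero set of a polynomial --- precisely the step the paper compresses into ``by construction of $M$'', which you spell out correctly via a torsion polynomial $r_0$ and $r:=r_0h_2$. The only difference is cosmetic: the paper's proof also records the compatibility relation $fSh=hUf$, which is not needed for the statement of the corollary.
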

\begin{proof}
Since $\dim_K M=1$, there exists a (unique) rational function $h\in \C(u,s)$ such that $Sw=hw.$ Then we have
$$fShw=SUw=USw=hUfw,$$
thus $fSh=hUf$. 
By construction of $M$ this implies that \eqref{=hc} holds outside of the zero set of a polynomial $r(n,k)$. 
\end{proof}

\section{Classification of pointed 1-dimensional $A$-modules satisfying the 3-term relation}\label{sec:3t}

Let $\psi(u,s)$ be a meromorphic function of the form $g\phi$, where $\phi$ is of $\gamma$ type and $g\in \C(u,s)$. Then $S\psi=f\psi$ for some $f\in \C(u,s)$. Suppose also that $\psi$ satisfies the 3-term recurrence relation \eqref{=3tM}:
\begin{equation}\label{=3Phi}
S^{-1}\psi =  U\psi +(\alp+R) \psi +  U^{-1}(\beta \psi), \text{ where }\alp,\beta\in \C(u) \text{ with }\beta\not \equiv 0, \text{ and }R\in \C[s].
\end{equation}
By Proposition \ref{prop:1dim},
 $\psi$ generates a one-dimensional $A$-module. Thus $U\psi=h\psi$ for some $h\in \C(u,s)$. This $h$ necessarily satisfies
\begin{equation}\label{=main2}
Uf/f=Sh/h
\end{equation}

Since $S\psi=f\psi$, we have $S^{-1}\psi=S^{-1}(f)^{-1}\psi$, and \eqref{=3Phi} is equivalent to
\begin{equation}\label{=3tfh}
S^{-1}(f)^{-1} = h +\alp+R  + U^{-1}(\beta/h),
\end{equation}
We are looking for the space of joint solutions of \eqref{=main2} and \eqref{=3tfh}.
\begin{thm}\label{thm:3termForm}
Suppose that $U\psi=h\psi, \,\, S\psi=f\psi$, and $\psi\neq 0$ satisfies \eqref{=3Phi}. Then there exist polynomials $g\in \C[u,s]$ and $p,q,w \in \C[t]$, and a rational function $\sigma \in \C(u)$ such that
 \begin{equation}\label{=f}
f=\frac{q(s+u)p(s-u)Sg}{w(s)g}, \quad h= \frac{\sigma(u)q(s+u)Ug}{p(s-u-1)g}.
\end{equation}
Moreover, we can choose $p,q,w,$ and $g$ such that $p,q$ are monic,
$g$ is coprime to each of the polynomials $w(s\DimaI{-1}),q(u+s), p(s-u)$ and has no factors independent of $s$. The tuple $(p,q,w,\sigma,g)$ satisfying these conditions is unique, except that $g$ can be multiplied by a constant.
\end{thm}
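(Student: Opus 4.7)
The plan is to exploit the explicit $\gamma$-type expansion of $\phi$ and to read the three-term recursion \eqref{=3Phi} as a pole-matching constraint on the rational functions $f = S\Phi/\Phi$ and $h = U\Phi/\Phi$. First I will write $\phi = \exp(au+bs)\prod_{i=1}^n \Gamma(k_iu+l_is+c_i)$ and compute $f$ and $h$ explicitly: each factor $\Gamma(k_iu+l_is+c_i)$ contributes $R_{l_i}(k_iu+l_is+c_i)$ to $f$ and $R_{k_i}(k_iu+l_is+c_i)$ to $h$, where $R_m(X):=\Gamma(X+m)/\Gamma(X)$ is a polynomial of degree $m$ in $X$ for $m\ge 0$ and the reciprocal of a polynomial of degree $|m|$ for $m<0$. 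The exponential contributes only constants, while the rational factor $g$ contributes $Sg/g$ to $f$ and $Ug/g$ to $h$.

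The main step, and I expect it to be the principal obstacle, is to show that $k_i, l_i \in \{-1, 0, 1\}$ for every $i$. I plan to rewrite \eqref{=3Phi} as
\[
(S^{-1}f)^{-1} \;=\; h \;+\; \alpha \;+\; \beta/(U^{-1}h)
\]
and compare pole divisors on both sides in $\C(u,s)$. Since $\alpha \in \C(u)[s]$ has no $s$-poles and $\beta \in \C(u)$ is $s$-independent, every pole of the left-hand side (viewed as a rational function in $s$ over $\C(u)$) must be cancelled by a pole of $h$ or of $\beta/(U^{-1}h)$. If $|l_i| \ge 2$ for some $i$, then $R_{l_i}(k_iu+l_is+c_i)$ contributes multiple parallel pole lines to $(S^{-1}f)^{-1}$; a careful case analysis on whether $k_i$ vanishes, together with the compatibility $Uf/f = Sh/h$ from \eqref{=main2}, should show that the required matching cannot occur. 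The symmetric argument exchanging the roles of $S$ and $U$ will give $|k_i| \le 1$.

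With $k_i, l_i \in \{-1, 0, 1\}$ in hand, I will sort the $\Gamma$-factors by type and read off the claimed forms. Factors with $l_i = 0$ and $k_i = \pm 1$ contribute only to $h$ and assemble into $\sigma(u) \in \C(u)$; factors with $k_i = 0$ and $l_i = \pm 1$ contribute only to $f$ and assemble into $w(s)$ in the denominator. Factors of type $(1,1)$ or $(-1,-1)$ produce linear factors in $u+s$, to be gathered into $q(s+u)$, and factors of type $(\pm 1, \mp 1)$ produce linear factors in $s-u$, yielding $p(s-u)$ in the numerator of $f$ and, via $R_{-1}(X) = 1/(X-1)$, the shifted factor $1/p(s-u-1)$ in $h$. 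Any factors that naively produce ``wrong-sided'' contributions (denominator $q$- or $p$-type factors in $f$) are absorbed into the polynomial $g$ through the pairings in $Sg/g$ and $Ug/g$.

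Finally, for uniqueness I will normalize $p$ and $q$ to be monic, divide out any common factor between $g$ and $w(s-1), q(u+s), p(s-u)$ into the respective polynomials, and absorb any $s$-independent factor of $g$ into $\sigma(u)$. These normalizations pin down the tuple $(p, q, w, \sigma, g)$ up to multiplication of $g$ by a non-zero constant, completing the theorem.
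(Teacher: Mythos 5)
There is a genuine gap, in fact two. First, your central step aims at the wrong (too weak) conclusion. Showing only $k_i,l_i\in\{-1,0,1\}$ does not suffice: it leaves open the ``wrong-sided'' pairs $(0,1)$, $(1,-1)$, $(-1,-1)$, whose $\Gamma$-factors contribute linear factors of $s$ to the numerator of $f$, or linear factors of $s\pm u$ to its denominator. Your plan to ``absorb'' these into $g$ cannot work: a single such $\Gamma$-factor contributes an \emph{unpaired} linear factor to $f=S\Phi/\Phi$, whereas $Sg/g$ with $g\in\C[u,s]$ contributes factors in shifted numerator/denominator pairs (equivalently, such a module $K\Gamma(-u-s+c)$ is not isomorphic to any $K\cdot(\text{polynomial})\cdot\prod\Gamma$ with only the allowed directions), so ruling these factors out is precisely the content of the theorem, not a bookkeeping step. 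The paper's pole analysis is sharper than the one you sketch: comparing, for each direction $(k_i,l_i)$, the maximal integer $n$ with a pole along $k_iu+l_is+c_i=n$ in each of $S^{-1}\Phi,U\Phi,\Phi,U^{-1}\Phi$ (namely $l_i,-k_i,0,k_i$), and using that the unmatched maximal pole cannot cancel, one gets that the two largest of $(0,l_i,k_i,-k_i)$ coincide, which forces $(k_i,l_i)\in\{(\pm1,1),(0,-1),(\pm1,0)\}$ and hence the claimed sidedness of $p,q,w$ directly. Note also that your proposed ``symmetric argument exchanging $S$ and $U$'' is not available: relation \eqref{=3Phi} is not symmetric in $S$ and $U$, and both bounds must come from the same relation.

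Second, your proposal omits entirely the other half of the theorem: the prefactor $g$ in $\Phi=g\phi$ is a priori only in $\C(u,s)$, and the assertion that $g$ can be taken to be a \emph{polynomial} (coprime to $w(s-1)$, $q(u+s)$, $p(s-u)$) requires a real argument. In the paper this occupies the second half of the proof: one clears denominators in \eqref{=3tfh} to get the polynomial identity \eqref{=3pq}, shows via Proposition \ref{prop:inv} that any irreducible factor of the denominator of $g$ would have to be of the form $u+(l/k)s+c$ with $k\geq|l|$, then forces $l/k=1$, and finally derives a contradiction from the factor $u+s+c$ with maximal $\Re c$. Nothing in your outline addresses this, and the uniqueness statement likewise needs an argument (the paper runs an induction on $\deg g$), though that part is comparatively routine.
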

To prove the theorem we will use the following proposition.
\begin{prop}\label{prop:inv}
Let $a$ be an irreducible monic non-constant polynomial in $s$ with coefficients in $\C(u)$.
If $S^kU^la=a$ and $(k,l)\neq (0,0)$ then $l\neq 0$ and
$a=s-(k/l)u+c$ with $c\in \C$.
\end{prop}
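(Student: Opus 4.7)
The plan is to first rule out $l=0$, then change to coordinates adapted to the shift $S^kU^l$, and finally use irreducibility to force $a$ to be linear. The only nontrivial ingredient is the elementary fact that a rational function in one variable which is invariant under a nontrivial translation must be constant.

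If $l=0$, then $S^ka=a$ with $k\ne 0$ means $a(u,s+k)=a(u,s)$ in $\C(u)[s]$. Writing $a$ as a monic polynomial of degree $d\ge 1$ in $s$, the difference $a(u,s+k)-a(u,s)$ has leading term $dk\,s^{d-1}$, which is nonzero; contradiction. So $l\ne 0$.

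With $l\ne 0$ I would introduce the coordinates $t:=ls-ku$ and $u':=u$, under which a direct computation shows that $S^kU^l$ fixes $t$ and shifts $u'\mapsto u'+l$. Substituting $s=(t+ku')/l$, I rewrite $a=\sum_{j=0}^{d}b_j(u')\,t^j$ with $b_j\in\C(u')$; monicity of $a$ in $s$ forces $b_d=l^{-d}$. The invariance $S^kU^la=a$ becomes $b_j(u'+l)=b_j(u')$ for every $j$, and since $l\ne 0$ each $b_j$ must be constant. Hence $a=P(t)=P(ls-ku)$ for some $P\in\C[X]$ of degree exactly $d$.

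To conclude, I would factor $P(X)=l^{-d}\prod_i(X-\alpha_i)$ over $\C$, producing the factorization $a=l^{-d}\prod_i(ls-ku-\alpha_i)$ in $\C[u,s]\subset\C(u)[s]$ with each factor of $s$-degree exactly $1$ (using $l\ne 0$). Irreducibility of $a$ then forces $d=1$, and the single factor yields $a=s-(k/l)u+c$ for some $c\in\C$, as claimed. No step here is a serious obstacle; the subtlety to watch out for is making sure that the change of variables preserves the monic normalization and that the resulting factors really have \emph{positive} $s$-degree, since it is exactly this that delivers the final contradiction with irreducibility.
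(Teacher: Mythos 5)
Your proof is correct and follows essentially the same route as the paper's: change to coordinates adapted to the shift so that the invariant linear form $ls-ku$ is fixed while the other variable is translated, deduce that $a$ is a polynomial in $ls-ku$ over $\C$, and then let irreducibility force degree one. The only cosmetic differences are that you avoid the paper's $\gcd$/Bezout unimodular substitution by tracking the action of $S^kU^l$ directly on $(u, ls-ku)$, and you spell out the $l=0$ case by a leading-coefficient computation.
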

\begin{proof}
First of all $l\neq 0$ since otherwise $S^ka=a$, thus $a\in \C(u)$, and since $a$ is monic we have $a=1$, contradicting the assumption that $a$ is not constant. 
Let $d:=gcd(k,l), k':=k/d, l':=l/d.$ Let $m,n$ be such that $mk'+nl'=1$.
Now, perform a linear change of variables: $x:=l's-k'u$, $y:=ms+nu$.
Under this change, the shift operator in $y$ becomes $S^{k'}U^{l'}$.

The condition $S^kU^la=a$ implies that $a$ is invariant to shifts $y\mapsto y+d$. Thus $a$ depends only on $x$. Thus $a=pol(ls-ku),$ where $pol$ is a polynomial in one variable with complex coefficients. Since $a$ is irreducible, $pol$ is also irreducible and thus it must be linear. Since $a$ is monic in $s$,  we have $a=s-(k/l)u + c$.
\end{proof}
\begin{proof}[Proof of Theorem \ref{thm:3termForm}]
Since $\psi=g\phi$, where $\phi$ is of $\gamma$ type and $g\in \C(u,s)$, $\psi$ has  the form
$$\psi=\exp(cu+ds)g(u,s)\prod \Gamma(k_iu+l_is+c_i),$$
with $k_i,l_i\in \Z$ and $(k_i,l_i)$ coprime for every $i$, $\Re c_i\in [0,1)$ and \DimaF{if $(k_i,l_i)=(-k_j,-l_j)$ then $c_i\neq 1-c_j$. }
By removing the condition $\Re c_i\in [0,1)$ and using the identity $x\Gamma(x)=\Gamma(x+1)$ we assume without loss of generality that for every $i$, $k_iu+l_is+c_i$ is not a factor in the numerator of $g$, and $k_iu+l_is+c_i-1$ is not a factor in the denominator of $g$. 

We claim that for every $i$ there are only five possibilities for the pair $(k_i,l_i)$. Namely, $(k_i,l_i)\in \{(\pm 1,  1), (0, -1),(\pm1, 0)\}$ for every $i$. Indeed, fix $i$. If $l_i=0$ then $k_i\in \{\pm1\}$ since $(k_i,l_i)$ are coprime.  Thus we assume $l_i\neq 0$. Assume also, without loss of generality, that for every $j$ with $(k_j,l_j)=(k_i,l_i)$, we have $\Re c_j \leq \Re c_i$. Consider all $n\in \Z$ such that $\psi$ has a pole along the line  $k_iu +l_i s+c_i=n$. The maximal $n$ will be $0$. For $S^{-1}\psi$, $U\psi$, and $U^{-1}\psi$ such maximal $n$-s will be $l_i, -k_i$, and $k_i$ respectively.
By our assumptions on $\alp$ and $\beta$, they do not have  poles of this type.
Every pole of a term in \eqref{=3Phi} should occur in at least two terms. 
The terms $S^{-1}\psi,U\psi$, and $U^{-1}\psi$ enter with non-zero coefficients that cannot cancel the pole, and one of the numbers $k_i,-k_i$ is non-negative. Thus the two largest of the numbers 
$(0,l_i,k_i,-k_i)$ are equal. Thus either $k_i=0$ and $l_i<0$, or  $l_i=|k_i|$. Since $k_i$ and $l_i$ are coprime, this implies that $(k_i,l_i)\in \{(0,-1),(\pm 1,1)\}$.

The pairs $(\pm 1,1)$ give the factors $s\pm u + c_i$ in $f=S\psi/\psi$, the pairs $(\pm 1,0)$ have no effect on $f$, and the pair $(0,-1)$ produces the factor $(s-1+ c_i)^{-1}$ in $f$. The term $\exp(cu+ds)$ produces a multiplicative constant $\exp(d)$. 
The assumption that $k_iu+l_is+c_i-1$ is not a term in the numerator of $g$ for any $i$ implies that the numerator of $g$ has no common factors with $w(s\DimaI{-1}),q(u+s), p(s-u)$. 
Thus $f$ has the form as in \eqref{=f}, and by \eqref{=main2} so does $h$, except that $g$ is allowed to be a rational function.

It is left to prove that $g$ is a polynomial.
We can write 
\begin{equation}
\sigma=\delta_1/\gam, \quad \alp=\delta_0/\gamma, \quad U^{-1}(\beta/\sigma)=\delta_{-1}/\gamma,
\end{equation}
where $\gamma,\delta_1,\delta_{-1}\in \C[u]$ and $\delta_0\in \C[u,s]$ are polynomials that have no overall common factor.
By \eqref{=f}, equation \eqref{=3tfh} becomes
\begin{multline}\label{=3pq}
\gamma(u)w(s-1)S^{-1}(g)=\delta_1(u)q(s+u)q(s+u-1)Ug \,+\\ +(\delta_0(u)+R(s)\gamma(u))q(s+u-1)p(s-u-1)g+\delta_{-1}(u)p(s-u)p(s-u-1)U^{-1}g.
\end{multline}

Suppose, by way of contradiction, that $g$ is not a polynomial, 
and let $a$ be an irreducible factor of the denominator of (the reduced expression for) $g$. 
By our assumption on $g$, $a$ depends on $s$. 
Then $S^{-1}a$ is an irreducible factor in the denominator of $S^{-1}g$.
\DimaI{The assumption that 
$k_iu+l_is+c_i-1$ is not a factor in the denominator of $g$ for every $i$ implies that the denominator of $g$ is coprime to $w(s)$. Thus} $w(s-1)$ cannot cancel the factor $S^{-1}a$, and thus $S^{-1}a$ appears in the denominator of the reduced expression for the LHS of \eqref{=3pq}. Thus $S^{-1}a$ must appear in the denominator of the reduced expression for one of the terms of the RHS as well.
 Therefore $S^{-1}a=U^ib$, where $i\in \{-1,0,1\}$ and $b$ is another factor in the denominator of $g$. Then $S^{-1}b$ is also a factor in the denominator of $S^{-1}g$. Since the number of factors is finite, the factors obtained in this way must repeat. It follows that $S^{-k}U^la=a$, where $k,l\in \Z$, $k\geq |l|$, and $k>0$. 

Thus, by Proposition \ref{prop:inv}, $a=d(u+(l/k)s+c)$ for some $c,d\in \C$. Without loss of generality we assume $d=1$ and that among all the factors of the form $u+(l/k)s+e$ with the same ratio $l/k$, $a$ has minimal $\Re e$. The third term of the RHS of \eqref{=3pq} has $U^{-1}a = a-1$ in the denominator. Thus $a-1 \in \{S^{-1}b', b', Ub'\}$ for some factor $b'$ of the denominator of $g$. But then $b\in \{a-1+l/k,a-1,a-2\}$. From the assumption on $a$ and the inequality $k\geq |l|$, we get $l/k=1$ and thus $a=u+s+c$. 

We proved that the denominator of $g$ can only have factors of the form  $u+s+c$. Let $a'$ be the factor with maximal $\Re c$. Then the first term of the RHS has $Ua'=a'+1$ in the denominator, and no other term of \eqref{=3pq} has this factor in the denominator - contradiction. 

\DimaI{
For the uniqueness statement, suppose we have 2 tuples $(p_i,q_i,w_i,g_i), \, i\in \{1,2\}$ such that for each $i$, $p_i,q_i$ are monic, $g_i(u,s)$ is coprime to $p_i(s-u)$, to  $q_i(s+u)$ and to $w_i(s)$ and has no factors independent of $s$, and 
\begin{equation}
\frac{q_1(s+u)p_1(s-u)Sg_1}{w_1(s)g_1}=\frac{q_2(s+u)p_2(s-u)Sg_2}{w_2(s)g_2}
\end{equation}
We claim that $p_1=p_2$, $q_1=q_2$, $w_1=w_2$, and $g_1/g_2$ is a constant.
We prove this by induction on $d:=\min(\deg g_1,\deg g_2)$. The base case $d=0$ is straightforward.  For the induction step, assume WLOG $\deg g_1\geq \deg g_2$ and let $a$ be an irreducible factor in $g_1$. 
Since $g$ is coprime to $p(s-u), q(s+u)$, $a$ appears in the denominator of the reduced expression for the LHS, and thus also of the reduced expression for the RHS. Thus $a$ divides either $g_2(u,s)$ or $w(s)$. 
If $a$ divides $g_2$ then we can cancel it out from both $g-1$ and $g_2$, and the statement holds by induction.
Assume now that $a$ divides $w(s)$. Then $a$ depends only on $s$, and so does $Sa$, which is a factor in the numerator of the reduced expression, since $Sg_1$ is coprime to $w_1(s)$. Thus $Sa$ divides $Sg_2$ and thus $a$ divides $g_2$,  and the statement again follows by induction. }
\end{proof}

In the next section we find all the possibilities for the degrees of $R$, $w$, $p$, and $q$.

\section{Classification of $R$, $w$, $p$, and $q$}\label{sec:cases}

We can write 
\begin{equation}
\sigma=\delta_1/\gam, \quad \alp=\delta_0/\gamma, \quad U^{-1}(\beta/\sigma)=\delta_{-1}/\gamma,
\end{equation}
where $\gamma,\delta_1,\delta_0,\delta_{-1}\in \C[u]$ are polynomials that have no overall common factor. By \eqref{=f}, equation \eqref{=3tfh} becomes
\begin{multline}\label{=3pq2}
\gamma(u)w(s-1)S^{-1}(g)=\delta_1(u)q(s+u)q(s+u-1)Ug \,+\\ +(\delta_0(u)+\gamma(u)R(s))q(s+u-1)p(s-u-1)g+\delta_{-1}(u)p(s-u)p(s-u-1)U^{-1}g.
\end{multline}

We will show below that there are only \emph{finitely} many possibilities for degrees of $p,q,w,R$. In particular we will prove that these degrees are at most $4$. 
Denote 
$$d_1:=\deg p, \; d_2:=\deg q,\; d_3:=\deg w, d_4:=\deg R$$

\begin{prop}[See \S \ref{subsec:Jac} for the proof]\label{prop:Jac}
If \eqref{=3pq2} holds with $R=0$ then $d_3\leq 2$ and either $d_1, d_2\leq 1$ or $d_1 = d_2=d_3=2$.
\end{prop}

\begin{prop}[See \S \ref{subsec:Wil} for the proof]\label{prop:cases}
If \eqref{=3pq2} holds with $d_1,d_3,d_4>0$ then $d_1=1$ and one of the following holds
\begin{enumerate}[(a)]
\item $d_2=1,d_3=4,d_4=2$ \label{it:lWil}
\item $d_2=0,d_3=3,d_4=2$ \label{it:lcdH}
\item $d_2=1,d_3=3,d_4=1$ \label{it:lcH}
\item $d_2=0,d_3=2,d_4=1$\label{it:lMex}
\item $d_2=0,d_3=d_4=1$ \label{it:lChar}
\end{enumerate}
\end{prop}

This section is devoted to the proof of these propositions.
The key tool is the following lemma.  
\begin{lem}\label{lem:d_i-equation} If \eqref{=3pq2} holds  then the five polynomials 
$$a_1(x)=(x-1)^{2d_1}, \; a_2(x)=(x-1)^{d_1}(x+1)^{d_2}, \;a_3(x)=(x+1)^{2d_2}, \; a_4(x)=x^{d_3},\; a_5(x)=x^{d_4}(x-1)^{d_1}(x+1)^{d_2}$$ are linearly dependent over $\C$. Moreover, if $R\equiv 0$ then $a_1,\dots,a_4$ are linearly dependent.
\end{lem}

For the proof of this lemma  we need the following one.
\begin{lemma}\label{lem:GenPol}
Let $\cR$ be a commutative $\C$-algebra which is also an integral domain, and let $A_1,\dots,A_n,B_1,\ldots,B_n\in \cR[u]$ be polynomials.
Suppose that $B_1,\ldots,B_n$ have the same leading coefficient $b$. Suppose that $A_1B_1,\ldots ,A_nB_n$ are linearly dependent.
Let $a_1,\dots,a_n$ denote the leading coefficients of $A_n$. 
 Then $a_1,\dots,a_n$ are linearly dependent over $\C$. 
\end{lemma}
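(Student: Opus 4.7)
The plan is to argue by looking at the highest degree part in $u$ of the assumed linear relation and using the fact that $\cR$ is a domain to cancel the common leading coefficient $b$ of the $B_i$.

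Concretely, I start by writing the assumed dependence as $\sum_{i=1}^{n} c_i A_i B_i = 0$ with $c_i \in \C$ not all zero, viewed as an identity in $\cR[u]$. I may assume each $B_i \neq 0$ (otherwise the statement about ``the'' leading coefficient $b$ is vacuous, and $b \neq 0$ by definition of leading coefficient). If some $A_i = 0$ then $a_i = 0$ and the $a_j$'s are trivially linearly dependent, so assume all $A_i \neq 0$. Set
\[
d := \max\{\deg(A_i B_i) : c_i \neq 0\}, \qquad I := \{i : c_i \neq 0 \text{ and } \deg(A_i B_i) = d\}.
\]
Since each $A_i B_i$ is a nonzero polynomial in $u$, we have $d \geq 0$ and $I \neq \emptyset$. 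The leading coefficient of $A_i B_i$ is $a_i b$ (using that all $B_i$ share the leading coefficient $b$), so the coefficient of $u^d$ in $\sum c_i A_i B_i$ equals $b \sum_{i \in I} c_i a_i$, and this must be $0$.

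Since $b \neq 0$ and $\cR$ is an integral domain, I can cancel $b$ to obtain $\sum_{i \in I} c_i a_i = 0$, which is a nontrivial $\C$-linear relation among the $a_i$ with $i \in I$ (each $c_i$ for $i \in I$ is nonzero by definition). Extending the relation by zero coefficients for indices outside $I$ yields the desired linear dependence of $a_1, \dots, a_n$ over $\C$.

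I do not anticipate a real obstacle here: the argument is essentially a one-step leading-coefficient comparison, and the only mild point is confirming that degree considerations produce a nonempty index set $I$ and that the integral-domain hypothesis on $\cR$ is exactly what allows the cancellation of $b$. The content of the lemma lies entirely in this cancellation, which is why the domain hypothesis is stated explicitly.
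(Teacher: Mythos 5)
Your proof is correct and follows essentially the same route as the paper's: compare the coefficient of $u^d$ at the top degree of the relation $\sum c_iA_iB_i=0$, factor out the common leading coefficient $b$, and cancel it using that $\cR$ is an integral domain. Your restriction of $d$ and the index set $I$ to indices with $c_i\neq 0$ is in fact a slight tightening of the paper's argument, since it guarantees the resulting relation $\sum_{i\in I}c_ia_i=0$ is nontrivial.
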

\begin{proof}
Let $d$ denote the maximum of degrees of the polynomials $A_iB_i$. Suppose without loss of generality that the polynomials $A_1B_1,\dots,A_kB_k$ have degree $d$, and the polynomials $A_{k+1}B_{k+1},\dots,A_nB_n$ have smaller degrees. Let $\sum_{i=1}^n c_iA_iB_i=0$ be a non-trivial linear relation, that exists by the condition. Inspecting the coefficients of degree $u^{d}$ in this relation, we obtain $(\sum_{i=1}^k c_ia_i)b=0$. Since $\cR$ is an integral domain, this implies $\sum_{i=1}^k c_ia_i=0$. 
\end{proof}

\begin{proof}[Proof of Lemma \ref{lem:d_i-equation}] 
Substituting $s=xu$ in \eqref{=3pq2} we obtain the identity
\begin{equation}
\label{=Ae}
A_1 g(xu,u+1)+A_2g(xu,u)+A_3g(xu,u-1)+A_4g(xu-1,u)+A_5g(xu,u) =0
\end{equation} where
\begin{align*}
A_1=\delta_{-1}(u)p(xu-u)p(xu-u-1),  \quad &A_2= \delta_0(u)p(xu-u-1) q(xu+u-1),\\A_3=\delta_1(u)q(xu+u)q(xu+u-1), \quad &A_4= -\gamma(u)w(xu-1), \\ A_5=R(xu)\gamma(u)p(xu-u-1) q(xu+u-1).
\end{align*}

Let $\cR:=\C[x]$ and view $A_i$ as polynomials in $\cR[u]$. It is easily seen that for each $i$, the leading coefficient of $A_i$ is a non-zero $\C$-multiple of $a_i(x)$, while each of the four $g$-factors \eqref{=Ae} has the \emph{same} leading $u$-coefficient. Thus the result follows from Lemma \ref{lem:GenPol}.
\end{proof}

\subsection{Proof of Proposition \ref{prop:Jac} and an extra result}\label{subsec:Jac}
In this subsection we assume that \eqref{=3pq2} holds with $R\equiv 0$. 
By Lemma \ref{lem:d_i-equation}, this implies that 
 there exist constants $c_1,c_2,c_3,c_4\in \C$, not all $0$, such that
\begin{equation}\label{=c}
c_1(x-1)^{2d_1}+c_2(x-1)^{d_1}(x+1)^{d_2}+c_3(x+1)^{2d_2}+c_4x^{d_3}\equiv0.
\end{equation}

\begin{lem}\label{lem:1stBound}
\DimaI{We have $|d_1-d_2|\leq 1$.} 
\end{lem}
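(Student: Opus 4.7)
The plan is to derive a contradiction from \eqref{=c} under the assumption $|d_1 - d_2| \geq 2$. After applying the substitution $x \mapsto -x$ in \eqref{=c} (which exchanges the roles of $(x-1)^{d_1}$ and $(x+1)^{d_2}$, hence swaps the roles of $d_1$ and $d_2$ in the analysis), it will suffice to treat the case $d_1 \geq d_2 + 2$, so that the degrees $2d_1 > d_1 + d_2 > 2d_2$ are pairwise distinct. The key observation is that $(x-1)^{d_1}$ divides both $a_1 = (x-1)^{2d_1}$ and $a_2 = (x-1)^{d_1}(x+1)^{d_2}$, while $a_3(1)$ and $a_4(1)$ are both nonzero. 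Hence \eqref{=c} forces $(x-1)^{d_1}$ to divide the polynomial $R(x) := c_3(x+1)^{2d_2} + c_4\, x^{d_3}$. The first step will be to rule out the subcase $c_3 = c_4 = 0$: this reduces \eqref{=c} to $c_1 a_1 + c_2 a_2 = 0$, and comparing leading coefficients in $x$ (possible since $\deg a_1 \neq \deg a_2$) forces $c_1 = c_2 = 0$, contradicting the non-triviality of \eqref{=c}.

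With $c_3, c_4$ not both zero, the equation $R(1) = 0$ gives $c_4 = -c_3 \cdot 2^{2d_2}$, so that both are nonzero. Next I compute $R'(1)$, which simplifies (using $c_4 = -c_3 \cdot 2^{2d_2}$) to $c_3 \cdot 2^{2d_2}(d_2 - d_3)$; its vanishing forces $d_2 = d_3$. When $d_1 \geq 3$, the additional requirement $R''(1) = 0$, after a similar simplification, yields $c_3 \cdot 2^{2d_2 - 1}\, d_2 = 0$, forcing $d_2 = 0$. Since the case $d_1 = 2$ already forces $d_2 \leq 0$, in every case the analysis of \eqref{=c} will be reduced to the degenerate situation $d_2 = d_3 = 0$ with $d_1 \geq 2$.

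The main obstacle is that \eqref{=c} admits the trivial solution $a_3 - a_4 = 0$ precisely in this degenerate case, regardless of $d_1$, so an additional argument is required. My plan is to return to the defining identity \eqref{=3pq}. When $d_2 = d_3 = 0$ one has $q = w = 1$, and the four summands of \eqref{=3pq} then have $s$-degrees $D_s$, $2d_1 + D_s$, $d_1 + D_s$, and $D_s$ respectively, where $D_s := \deg_s g$. The term $\delta_{-1}(u)\, p(s-u)\, p(s-u-1)\, U^{-1}g$ is the unique summand of maximal $s$-degree $2d_1 + D_s$, and its leading coefficient in $s$ is the product of $\delta_{-1}(u)$ and the leading $s$-coefficient of $g$, shifted via $u \mapsto u-1$; both factors are nonzero polynomials in $u$, so the identity \eqref{=3pq} cannot hold in $\C[u,s]$. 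The symmetric case $d_2 \geq d_1 + 2$ will be handled by the same strategy, with the degenerate situation becoming $d_1 = d_3 = 0$ and the role of the $\delta_{-1}$-summand played by the $\delta_1$-summand of \eqref{=3pq}.
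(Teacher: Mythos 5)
Your proposal is correct, and its overall skeleton coincides with the paper's: both use \eqref{=c} to reduce to the degenerate case $d_2=d_3=0$ (resp.\ $d_1=d_3=0$ in the mirrored case), and both then derive the contradiction from the fact that in \eqref{=3pq} the $\delta_{-1}$-summand (resp.\ $\delta_{1}$-summand) is the unique term of maximal $s$-degree $2d_1+\deg_s g$. The difference is in how \eqref{=c} is exploited: the paper argues monomial-by-monomial that the two top-degree terms of $(x-1)^{2d_1}$, and then of $(x-1)^{d_1}(x+1)^{d_2}$, cannot be cancelled (the single monomial $x^{d_3}$ can kill at most one of them), forcing $c_1=c_2=0$ and then $d_2=d_3=0$; you instead use that $(x-1)^{d_1}$ must divide $R=c_3(x+1)^{2d_2}+c_4x^{d_3}$ and evaluate $R$, $R'$, $R''$ at $x=1$, obtaining $c_4=-c_3\,2^{2d_2}$, $d_3=d_2$, and $d_2=0$; both computations check out, and yours is a clean mechanical root-multiplicity argument. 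You are also more explicit than the paper about the case $d_2\geq d_1+2$, where after the $x\mapsto -x$ relabeling the final contradiction must indeed be read off from the $\delta_1$-summand of \eqref{=3pq} rather than the $\delta_{-1}$-summand. One small point to state in the write-up (the paper is equally silent about it): the leading-term argument needs $\delta_{-1}\neq 0$, which follows from $\beta\not\equiv 0$ in \eqref{=3Phi}, and in the mirrored case $\delta_1\neq 0$, which follows from $\sigma\neq 0$ since $U\Phi=h\Phi\neq 0$.
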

\begin{proof}
Replacing $x$ by $-x$ in \eqref{=c} we can assume $d_1\geq d_2$. Thus we have to show $d_1\leq d_2+1$.

Suppose by way of contradiction that $d_1\geq d_2+2$. Then the polynomial $(x-1)^{2d_1}$ has, among others, monomials of degrees $2d_1$ and $2d_1-1$. No term of $(x+1)^{2d_2}$ or of $(x-1)^{d_1}(x+1)^{d_2}$  can cancel any of these, and the term $x^{d_3}$ can only cancel one of them. Thus we must have $c_1=0$. But then, the polynomial $(x-1)^{d_1}(x+1)^{d_2}$ has terms of degrees $d_1+d_2$ and $d_1+d_2-1$ and again $x^{d_3}$ can only cancel one of them. Thus we have $c_1=c_2=0$ and thus $c_3,c_4\neq 0$. If $d_2>0$ then $(x+1)^{2d_2}$ has at least three terms, and they cannot be cancelled by $c_4x^{d_3}$. Thus we must have $d_2=d_3=0$. Then looking at the degree of $s$ in \eqref{=3pq2}, we see that the term $\delta_{-1}p(s-u)p(s-u-1)U^{-1}g$ has higher degree in $s$ then the other terms. This is a contradiction. 
\end{proof}
\begin{lem}\label{lem:2ndBound}
Suppose that $d_1,d_2\geq 1$. Then $d_1=d_2\in \{1,2\}$.
\end{lem}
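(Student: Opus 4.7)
The plan is to exploit a factorization of quadratic forms over $\C$. Setting $A := (x-1)^{d_1}$ and $B := (x+1)^{d_2}$, the dependence relation \eqref{=c} becomes $c_1A^2 + c_2AB + c_3B^2 = -c_4 x^{d_3}$, and the left-hand side factors over $\C$ as $(\alpha A + \beta B)(\alpha' A + \beta' B)$ for some scalars. First I would dispose of the case $c_4 = 0$: the assumption $d_1, d_2 \geq 1$ makes $A$ and $B$ linearly independent in $\C[x]$, so vanishing of the product in the integral domain $\C[x]$ forces one of the two factors (and hence the corresponding pair of scalars) to vanish, which in turn forces $c_1 = c_2 = c_3 = 0$, contradicting non-triviality of \eqref{=c}.

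So $c_4 \neq 0$. Because $\C[x]$ is a UFD whose only irreducible factor of $x^{d_3}$ is $x$, each of the two non-zero factors of the product must be a scalar multiple of a power of $x$. This yields the key monomial identity
\[
\alpha (x-1)^{d_1} + \beta (x+1)^{d_2} = \gamma x^e
\]
for suitable $\alpha, \beta, \gamma \in \C$ with $\gamma \neq 0$ and $e \in \Z_{\geq 0}$. The cases $\alpha = 0$ or $\beta = 0$ are ruled out immediately because $d_1, d_2 \geq 1$ means that neither $(x-1)^{d_1}$ nor $(x+1)^{d_2}$ is a power of $x$, so $\alpha, \beta \neq 0$.

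It remains to show that such an identity forces $d_1 = d_2 \in \{1,2\}$. If $d_1 \neq d_2$, say $d_1 > d_2$, then a degree comparison gives $e = d_1$ and $\gamma = \alpha$, so the identity reduces to $(x-1)^{d_1} + (\beta/\alpha)(x+1)^{d_2} = x^{d_1}$; evaluating at $x = -1$ produces $(-2)^{d_1} = (-1)^{d_1}$, i.e.\ $2^{d_1} = 1$, which is absurd. Hence $d_1 = d_2 =: d$. Expanding via the binomial theorem gives $\sum_{j=0}^{d} \binom{d}{j}\bigl(\alpha(-1)^{d-j} + \beta\bigr) x^j = \gamma x^e$, so $\alpha(-1)^{d-j} + \beta = 0$ for every $j \in \{0,\ldots,d\} \setminus \{e\}$. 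For $d \geq 3$ this exceptional set has at least three elements of $\{0, 1, \dots, d\}$ and hence contains integers of both parities, forcing $\alpha = \beta = 0$, a contradiction. Thus $d \in \{1,2\}$. The main obstacle is extracting the clean monomial identity displayed above; once it is isolated, the degree-plus-evaluation trick and the parity count make the remaining analysis essentially automatic.
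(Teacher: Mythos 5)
Your proof is correct and takes essentially the paper's route: both arguments reduce \eqref{=c} to the same key monomial identity $\alpha(x-1)^{d_1}+\beta(x+1)^{d_2}=\gamma x^{e}$ with $\alpha,\beta,\gamma\neq 0$ (the paper by choosing a root $\lambda$ of $c_1t^2+c_2t+c_3$ and showing $0$ is the only root of $(x-1)^{d_1}-\lambda(x+1)^{d_2}$, you by factoring the binary quadratic form and invoking unique factorization --- the same maneuver), and then finish by elementary coefficient analysis, yours via a parity count, the paper's by evaluating at $0,\pm1$ and reading off the coefficient of $x$ to get $2d=2^{d}$. One small phrasing fix: three elements of $\{0,\dots,d\}$ need not contain both parities in general; what you actually use is that your exceptional set is the complement of a single point in the $d+1\ge 4$ consecutive integers $0,\dots,d$ (for $d\ge 3$), which indeed leaves at least one even and one odd index, so the conclusion $\alpha=\beta=0$ stands.
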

\begin{proof}
Suppose $c_4=0$ in \eqref{=c}. Then setting $x=\pm 1$ we deduce $c_1=c_3=0$ and hence $c_2=0$ also. Thus we may assume $c_4\neq 0$. Now setting $x=\pm 1$ we get $c_1,c_3\neq 0$.

Let $\lambda$ be a root of the quadratic polynomial $c_1t^2+c_2t+c_3$, and let $\mu$ be a root of the polynomial $(x-1)^{d_1}-\lambda(x+1)^{d_2}$. Then setting $x=\mu$ in \eqref{=c} the sum of the first three terms vanish and we get $c_4\mu^{d_3}=0$, which implies $\mu=0$ (and $d_3\geq 1$).
Thus 0 is the only root of $(x+1)^{d_1}-\lambda (x-1)^{d_2}$, which means this polynomial is a monomial:
\begin{equation}\label{=k}
(x-1)^{d_1}-\lambda (x+1)^{d_2}=\eps x^{d_5}\quad \text{ for some }\eps,d_5
\end{equation}

 If $d_5=0$ then $d_1=d_2=1$, so we may assume $d_5>0$. 
Setting $x=0, 1,-1$ in \eqref{=k} we get $$\lambda=(-1)^{d_1}, \, \eps=(-1)^{d_1+1}2^{d_2}, \, \eps=(-1)^{d_1+d_5} 2^{d_1}.$$
This forces $d_1=d_2=:d$, and \eqref{=k} becomes
\begin{equation}
(x-1)^d-(-1)^{d}(x+1)^d=(-1)^{d+d_5} 2^dx^{d_5}
\end{equation}

The coefficient of $x$ on the left is $2d(-1)^{d-1}$ which implies $d_5=1$ and $2d=2^d$. Thus $d\in \{1,2\}$.
\end{proof}

\begin{lem}\label{lem:Maple}
 \DimaI{If $d_1=d_2=2$ then $d_3 = 2$} and if in addition $g=1$ then $\delta_1+\delta_0+\delta_{-1}=0$.
\end{lem}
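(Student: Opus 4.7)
The plan is to apply Lemma \ref{lem:d_i-equation} with $d_1=d_2=2$ in two stages: first at the level of leading $u$-coefficients, to pin down $d_3$ and the ratios of the leading coefficients of $\delta_{-1},\delta_0,\delta_1,\gamma$; and then to lift the resulting leading-coefficient identity to the full polynomial identity $\delta_1+\delta_0+\delta_{-1}=0$.

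In the first stage, plugging $d_1=d_2=2$ into the linear relation produced by Lemma \ref{lem:d_i-equation} gives a nontrivial combination
\[
c_1(x-1)^4+c_2(x-1)^2(x+1)^2+c_3(x+1)^4+c_4x^{d_3}=0.
\]
Mimicking the argument of Lemma \ref{lem:2ndBound}, one rules out $c_4=0$ and then notes that any root $\lambda$ of $c_1t^2+c_2t+c_3=0$ must render $(x-1)^2-\lambda(x+1)^2$ a monomial; in degree $2$ this forces $\lambda=1$, so the quadratic is proportional to $(t-1)^2$, i.e.\ $(c_1,c_2,c_3)\propto(1,-2,1)$. Combined with the algebraic identity $\bigl((x-1)^2-(x+1)^2\bigr)^2=16x^2$, this pins down $d_3=2$ and $c_4\propto-16$. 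Tracing the relation back through Lemma \ref{lem:GenPol} applied to \eqref{=Ae}, the proportionality passes to the leading $u$-coefficients $e_{-1},e_0,e_1,e_\gamma$ of $\delta_{-1},\delta_0,\delta_1,\gamma$: one reads off $e_{-1}=e_1$, $e_0=-2e_1$, $e_\gamma=16e_1$. In particular the leading $u$-coefficient of $P:=\delta_1+\delta_0+\delta_{-1}$ vanishes.

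In the second stage I upgrade this to $P=0$ as a polynomial. Write $g(u,s)=\sum_{j=0}^{m} g_j(u)s^j$. Since $d_3=2$, the LHS of \eqref{=3pq} has $s$-degree $m+2$, so comparing the coefficient of $s^{m+4}$ on both sides yields
\[
\delta_1(u)g_m(u+1)+\delta_0(u)g_m(u)+\delta_{-1}(u)g_m(u-1)=0.
\]
If $g_m$ is a scalar this is already $P=0$. In general I would expand the shifts via $g_m(u\pm1)=\sum_{k\ge0}(\pm1)^kg_m^{(k)}(u)/k!$ and rewrite the identity as
\[
P(u)g_m(u)+(\delta_1-\delta_{-1})g_m'(u)+\tfrac{1}{2}(\delta_1+\delta_{-1})g_m''(u)+\cdots=0,
\]
then cascade down in $u$-degree, using the leading-coefficient relations from stage one (especially $e_1=e_{-1}$) to peel off the coefficients of $P$ one at a time. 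If more equations are needed they can be extracted from the $s^{m+3},s^{m+2},\ldots$ coefficients of \eqref{=3pq}.

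The main obstacle is precisely this final cascade: verifying that the polynomial identities so generated, together with the leading-coefficient relations, genuinely force every coefficient of $P$ to vanish, rather than merely bounding $\deg P$. This amounts to a finite algebraic check which -- as the nickname \texttt{lem:Maple} suggests -- is naturally carried out with the aid of a computer algebra system.
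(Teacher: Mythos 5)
Your first stage is correct and is essentially the paper's own argument: the paper simply expands \eqref{=c} with $d_1=d_2=2$ and notes that, since the right-hand side must be a monomial and the coefficients of $x^4,x^0$ and of $x^3,x^1$ coincide in pairs, one gets $c_3=c_1$, $c_2=-2c_1$, hence $d_3=2$ and $c_4=-16c_1$; your root-based variant in the style of Lemma \ref{lem:2ndBound} reaches the same normalization $(1,-2,1,-16)$. Your ``tracing back'' through Lemma \ref{lem:GenPol} is also legitimate: since any nontrivial relation among the four $a_i$ must have all coefficients nonzero, all four terms of \eqref{=Ae} attain the maximal $u$-degree, so the relation produced by Lemma \ref{lem:GenPol} is exactly $e_{-1}a_1+e_0a_2+e_1a_3-e_\gamma' a_4=0$, giving $e_{-1}=e_1$, $e_0=-2e_1$ and equal $u$-degrees for $\delta_{\pm1},\delta_0$.

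The divergence is in your second stage, and there your write-up is incomplete. The paper's entire proof of $\delta_1+\delta_0+\delta_{-1}=0$ is precisely the identity you display: since $d_3=2$, the $s$-degree of the left side of \eqref{=3pq} is $\deg_s g+2<\deg_s g+4$, so the top $s$-coefficient of the right side vanishes, i.e. $\delta_1(u)g_m(u+1)+\delta_0(u)g_m(u)+\delta_{-1}(u)g_m(u-1)=0$, and the paper reads this off as the conclusion --- which is immediate when $g_m$ is constant (in particular when $g=1$, the only situation in which this identity is later used, in Lemma \ref{lem:NewFpq} and Corollary \ref{cor:pqDet}). So up to that point you have reproduced the paper's argument, and your caution about non-constant $g_m$ is a fair criticism of the paper's one-line phrasing rather than a defect of your stage one. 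However, the cascade you then propose is not carried out, and it cannot close on the data you cite: for $g_m(u)=u$, the triple $\delta_1=g_m(u)g_m(u-1)$, $\delta_0=-2g_m(u+1)g_m(u-1)$, $\delta_{-1}=g_m(u+1)g_m(u)$ satisfies your displayed identity, has equal degrees and leading coefficients $(1,-2,1)$, yet sums to the nonzero constant $2$. Hence the top-coefficient identity plus the stage-one leading data do not imply $\delta_1+\delta_0+\delta_{-1}=0$; to handle non-constant $g_m$ you would genuinely have to bring in the lower $s$-coefficients of \eqref{=3pq} (or some structural fact about $g$), and you have not verified that this works. As a proof of the lemma in the generality stated, the final step is therefore a real gap, not merely a routine computer check; if you restrict to $g_m$ constant (as the paper implicitly does, and as suffices for all its later uses), your argument is complete.
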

\begin{proof}
Substituting $d_1=d_2=2$ in \eqref{=c} we obtain
\begin{equation}
-c_4x^{d_3}=(c_1+c_2+c_3)x^4+4(c_3-c_1)x^3+2(3c_1-c_2+3c_3)x^2+4(c_3-c_1)x+(c_1+c_2+c_3)
\end{equation}
\DimaI{Since the LHS is a monomial, so is the RHS. Since the coefficients $4(c_3-c_1)$ and $c_1+c_2+c_3$ appear twice, both must vanish. Thus, $c_3=c_1$,  $c_2=-2c_1$, and $d_3=2$.
Thus the top degree terms in $s$ of the RHS of \eqref{=3pq2} must cancel, and thus if  $g=1$ then $\delta_1+\delta_0+\delta_{-1}=0$. }
\end{proof}

\begin{proof}[Proof of Proposition \ref{prop:Jac}.]
If $d_1,d_2\leq 1$ then the degree in $s$ of the RHS of \eqref{=3pq2} is at most $\deg g+2$. Thus $d_3\leq 2$.
Suppose now $\max(d_1,d_2)\geq 2$. Then, by Lemma \ref{lem:1stBound}, $\min(d_1,d_2)\geq 1$, and Lemma \ref{lem:2ndBound} implies $d_1=d_2=2$.  By Lemma \ref{lem:Maple} this implies $d_3\leq 2$. 

Altogether,  we obtain $d_3\leq 2$, and either $d_1,d_2\leq 1$, or $d_1=d_2=2$.
\end{proof}


\DimaI{In the special case $g=1$ we can further restrict the options for $p,q,$ and $w$.}
\begin{lem}\label{lem:NewFpq}
\DimaI{If $d_1=d_2=2$,} $w(-1)=0$, $g=1$, $R=0$, and $w$ is monic then one of the following holds.
\begin{enumerate}[(a)]
\item $w(s)=(s+1/2)(s+1)$, $p(t)=q(-1-t)$ \label{it:1pq}
\item $w(s)=(s+1)(s+3/2)$, $p(t)=q(-2-t)$ \label{it:2pq}
\end{enumerate}
\end{lem}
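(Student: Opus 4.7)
The plan is to exploit $w(-1)=0$ by writing $w(s)=(s+1)(s+c)$, so that $w(s-1)=s(s+c-1)$ vanishes precisely at $s=0$ and $s=1-c$. Substituting each of these values into \eqref{=3pq} forces the left-hand side to zero. Using $\delta_1+\delta_0+\delta_{-1}=0$ from \cref{lem:Maple} to replace $\delta_0$, the substitution $s=0$ reorganizes to
\begin{equation*}
\delta_1(u)\,q(u-1)\,B_0(u)=\delta_{-1}(u)\,p(-u-1)\,B_0(u-1),
\end{equation*}
where $B_0(u):=q(u)G_0(u+1)-p(-u-1)G_0(u)$ and $G_0(u):=g(u,0)$ is a nonzero polynomial (since $g$ is coprime to $w(s-1)$, hence to $s$). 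The substitution $s=1-c$ produces the parallel identity
\begin{equation*}
\delta_1(u)\,q(u-c)\,B_1(u)=\delta_{-1}(u)\,p(-c-u)\,B_1(u-1),
\end{equation*}
with $B_1(u):=q(u+1-c)G_1(u+1)-p(-c-u)G_1(u)$ and $G_1(u):=g(u,1-c)\not\equiv 0$.

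The crucial step is to show that either $B_0\equiv 0$ or $B_1\equiv 0$. If both were nonzero, the two identities would express $\delta_1/\delta_{-1}$ as two distinct rational functions; comparing them against the coefficient of $s^{\deg_s g+3}$ in \eqref{=3pq}, which independently constrains $\delta_1/\delta_{-1}$ in terms of the subleading coefficients of $p$ and $q$, yields a contradiction after invoking the coprimality of $g$ with $p(s-u)$, $q(u+s)$, $w(s-1)$ guaranteed by \cref{thm:3termForm}.

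Assuming $B_0\equiv 0$, we have $q(u)G_0(u+1)=p(-u-1)G_0(u)$. Writing this as $p(-u-1)/q(u)=G_0(u+1)/G_0(u)$ and noting that $G_0(u+1)/G_0(u)$ is a product of linear ratios $(u-r_i+1)/(u-r_i)$ while $p(-u-1)/q(u)$ is a fixed ratio of degree-$2$ polynomials, the coprimality constraints on $g$ force $G_0$ to be constant, whence $p(-u-1)=q(u)$, i.e.\ $p(t)=q(-1-t)$. Feeding this into the $s=1-c$ identity, the resulting polynomial identity is consistent only when $c=1/2$, yielding case (a). The case $B_1\equiv 0$ is symmetric: it gives $p(t)=q(1-2c-t)$, and consistency with the $s=0$ identity then forces $c=3/2$, so $p(t)=q(-2-t)$ as in case (b). The main obstacle is the dichotomy: ruling out the scenario where neither $B_i$ vanishes, which requires a careful interplay between the two rational identities for $\delta_1/\delta_{-1}$ and the constraints from higher-order $s$-coefficient matching in \eqref{=3pq}, fully exploiting the coprimality conditions of \cref{thm:3termForm}.
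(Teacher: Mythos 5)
There is a genuine gap, and it is concentrated in the two places where your sketch waves at ``coprimality.'' First, note that the lemma is implicitly stated in the special case $g=1$ (see the sentence preceding it and the hypothesis $g=1$ in Corollary \ref{cor:pqDet}); you instead work with a general $g$ in the canonical form of Theorem \ref{thm:3termForm}, and in that generality the statement you are trying to prove is \emph{false}. The paper's own Appendix \ref{app:gauge} family $\cP^{(c,\lam)}$ has $p(t)=t(t-c)$, $q(t)=(t+1)(t+c+1)$, $w(s)=(s+1)(s+3/2)$ (so $d_1=d_2=2$, $w$ monic, $w(-1)=0$) and a non-constant $g$ satisfying the coprimality conditions of Theorem \ref{thm:3termForm}, yet $p(t)\neq q(-2-t)$ and $w\neq(s+1/2)(s+1)$, so neither alternative holds. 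Tracing your argument on this example shows exactly which step breaks: one finds $B_1\equiv 0$ with $G_1(u)=g(u,-1/2)=-(1-\lam)(u+1/2)(u+c+1/2)$ \emph{non-constant}, and indeed $p(-3/2-u)/q(u-1/2)=G_1(u+1)/G_1(u)$. So your claim that ``the coprimality constraints on $g$ force $G_0$ (resp.\ $G_1$) to be constant'' is wrong: coprimality of the bivariate polynomial $g$ with $p(s-u)$, $q(u+s)$, $w(s-1)$ does not survive the specializations $s=0$ or $s=1-c$, and the ratio of two monic quadratics can perfectly well be a telescoping product $G(u+1)/G(u)$.

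Second, even if you add the hypothesis $g=1$ (so $G_0=G_1=1$ and this first issue disappears), your ``crucial step'' -- that $B_0\equiv 0$ or $B_1\equiv 0$ -- is only asserted, with an unspecified comparison of two rational expressions for $\delta_1/\delta_{-1}$ against the $s^3$-coefficient of \eqref{=3pq}; since in the case $g=1$ each alternative (a), (b) is equivalent to the vanishing of $B_0$ resp.\ $B_1$, this dichotomy \emph{is} essentially the whole lemma, and no proof of it is given. (Your endgame is fine once the dichotomy and $G_i=1$ are granted: with $p(t)=q(-1-t)$ the $s^3$-relation plus the $s=1-c$ substitution do force $c=1/2$, and symmetrically for (b).) The paper avoids any dichotomy: it shifts $u,s$ to normalize $p(t)=t^2+c$, $q(t)=t^2+d$, uses Lemma \ref{lem:Maple} to get $\delta_1+\delta_0+\delta_{-1}=0$, and then reads off the $s^3$, $s^2$ and $s^1$ coefficients of \eqref{=3pq} (with $g=1$) to pin down $\delta_{\pm1},\delta_0,\gamma$, conclude $c=d$ and $w(s)=s(s+1/2)$ before un-shifting, the two admissible shifts giving cases (a) and (b). To salvage your route you would need to (i) invoke $g=1$ explicitly, and (ii) supply a genuine argument for the dichotomy, e.g.\ by confronting the identities $\delta_1 q(u-1)B_0(u)=\delta_{-1}p(-u-1)B_0(u-1)$ and its analogue at $s=1-c$ with the linear relation $\delta_1(4u+2+q_1-p_1)=\delta_{-1}(4u-2+q_1-p_1)$ coming from the $s^3$-coefficient, together with the remaining coefficients of \eqref{=3pq} -- at which point you are doing the paper's computation in a more roundabout way.
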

\begin{proof}
Shifting $u$ and $s$ we can assume $p(t)=t^2+c$, $q(t)=t^2+d$ for some $c,d$.
By Lemma \ref{lem:Maple} we have $d_3 = 2$ and $\delta_1+\delta_0+\delta_{-1}=0$.
 Since $\deg w=d_3= 2$, the term of $s^3$ in the RHS of \eqref{=3pq2} must vanish. But this term is $4(\delta_1-\delta_{-1})u-2(\delta_1+2\delta_0+\delta_{-1})$.
Since $\delta_1+\delta_0+\delta_{-1}=0$, we obtain that $\delta_0=2u(\delta_1-\delta_{-1})$, and thus $(2u+1)\delta_{1}=(2u-1)\delta_{-1}$. 
Note that any common factor of $\delta_{-1}$ and $\delta_1$ divides also $\delta_0$, and thus also $\gamma$ (since $R=0$). 
Assuming without loss of generality that the leading coefficient of $\delta_{-1}$ is 2, we obtain that $\delta_{-1}=2u+1$ and $\delta_1=2u-1$. This implies $\delta_0=-4u$.

Then the coefficient of $s^2$ in the RHS of \eqref{=3pq2} is $32u^3-8u+2(d-c)$.
 Since $w$ is monic, we have $\gamma(u)=32u^3-8u+2(c-d)$.
The coefficient of $s$ in the RHS of \eqref{=3pq2} is $2(d -c )+ 12u - 16u^2(c-d)  - 48u^3$. Since this has to be divisible by $\gamma$, we have $c=d$. In this case $w(s-1)=(s-1)(s-1/2)$, $\gamma=8u(4u^2-1)$.
Thus $w(s)=s(s+1/2)$.

Shifting back in $s$ we get the same shift for $p$ and $q$, while shifting in $u$ we obtain opposite shifts.
Since $w(-1)=0$ after the shift, we can shift $s$ either by $1/2$ or by $1$. If we shift by $1/2$ we get case \eqref{it:1pq}, and if we shift it by $1$ we get case \eqref{it:2pq}.
\end{proof}
This lemma, together with Proposition \ref{prop:Jac}, implies the following corollary.
\begin{cor}\label{cor:pqDet}
Suppose that \eqref{=3pq2} holds with $R\equiv 0$, $g=1$, $w(-1)=0$, and $w$ is monic. Then one of the following holds.
\begin{enumerate}[(a)]
\item $\deg p,\deg q\leq 1$
\item $w(s)=(s+1/2)(s+1)$, $p(t)=q(-1-t)$
\item $w(s)=(s+1)(s+3/2)$, $p(t)=q(-2-t)$
\end{enumerate}
\end{cor}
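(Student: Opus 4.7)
The plan is to observe that this corollary is essentially a direct combination of the two preceding results, Corollary \ref{cor:pq} and Lemma \ref{lem:NewFpq}, once we verify that the hypotheses of the latter are in force. The three extra assumptions of the corollary (namely $g=1$, $w(-1)=0$, and $w$ monic) are not needed to apply Corollary \ref{cor:pq}, but they are exactly what allows us to invoke Lemma \ref{lem:NewFpq} to handle the non-trivial case.

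First I would apply Corollary \ref{cor:pq} to produce the dichotomy: either $d_1, d_2 \leq 1$, in which case we are immediately in situation (a), or else $d_1 = d_2 = d_3 = 2$. So the only remaining task is to analyze the quadratic case $d_1 = d_2 = 2$ under the extra hypotheses.

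In that remaining case, Lemma \ref{lem:NewFpq} applies verbatim (its hypotheses are $d_1=d_2=2$, $w(-1)=0$, and $w$ monic, all of which are satisfied, with the calculation of the lemma using the simplified form of \eqref{=3pq} that relies on $g=1$). The lemma directly yields the two possibilities
\[
w(s)=(s+1/2)(s+1),\quad p(t)=q(-1-t),
\]
and
\[
w(s)=(s+1)(s+3/2),\quad p(t)=q(-2-t),
\]
which are precisely cases (b) and (c) of the corollary. Since there is no additional bookkeeping to do beyond combining these two statements, I do not anticipate an obstacle; the real work was done in proving Lemmas \ref{lem:1stBound}, \ref{lem:2ndBound}, \ref{lem:Maple}, and \ref{lem:NewFpq} above.
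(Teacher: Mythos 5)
Your proposal is correct and follows exactly the paper's route: the paper gives no separate argument beyond noting that Corollary \ref{cor:pq} (dichotomy $d_1,d_2\leq 1$ or $d_1=d_2=d_3=2$) together with Lemma \ref{lem:NewFpq} (which treats the quadratic case under $g=1$, $w(-1)=0$, $w$ monic) yields the corollary. Your remark that $g=1$ is what licenses the use of Lemma \ref{lem:NewFpq} matches the paper's framing of that lemma as the special case $g=1$.
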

\subsection{Proof of Proposition \ref{prop:cases}}\label{subsec:Wil}

\begin{lem}\label{lem:ci}
If \eqref{=3pq2} holds with $d_4\geq 1$ then there exist constants $c_1,c_2,c_3,c_4\in \C$ such that
\begin{multline}\label{=c2}
c_1(x-1)^{2d_1}+c_2(x-1)^{d_1}(x+1)^{d_2}+c_3(x+1)^{2d_2}+c_4x^{d_3}+x^{d_4}(x-1)^{d_1}(x+1)^{d_2}\equiv0,\\
\text{ and  if }d_3\neq d_1+d_2+d_4 \text{ then } c_4=0
\end{multline}
\end{lem}
\begin{proof}
We have to show that in \eqref{=Ae},
the factor $A_5$ participates in the leading $u$-coefficient. Suppose the contrary. By the previous subsection, in this case we have $d_3\leq 2$. Since $d_1\geq 1, $  this implies that if $c_4\neq 0$ then $d_3\leq d_1+d_2+1$. Since $d_4\geq 1$, this in turn implies that if $A_5$ does not participate in leading $u$-coefficient then neither does $A_4$ , since the degree of $u$ in $A_5$ is $\deg \gamma+d_4+d_1+d_2$, and in $A_4$ it is $\deg \gamma+d_3$. Thus $c_4=0$, and  the set $\{(x-1)^{2d_1},(x-1)^{d_1}(x+1)^{d_2},(x+1)^{2d_2}\}$ is linearly dependent. This is impossible, since $d_1\geq 1$. 

Finally, if $d_3\neq d_1+d_2+d_4$ then $\deg \gamma+d_3\neq \deg \gamma+d_4+d_1+d_2$, and the degrees of $u$ in $A_5$ and in $A_4$ are different, and cannot both be maximal. Thus in this case $c_4=0$.
\end{proof}

\begin{lemma}\label{lem:<=1}
If \eqref{=c2} holds with $d_3,d_4>0$ then  $d_1,d_2\leq 1$.
\end{lemma}
\begin{proof}
It is enough to show that $d_1\leq 1$, since the other statement follows by the coordinate change $x\mapsto -x$. Assume the contrary: $d_1\geq 2$. Then $c_3(x+1)^{2d_2}+c_4x^{d_3}$ is divisible by $(x-1)^{d_1}$. 

Suppose first that $c_3=c_4=0$. Then dividing by $(x-1)^{d_1}$ we obtain
\begin{equation}\label{=c0}
c_1(x-1)^{d_1}+c_2(x+1)^{d_2}+x^{d_4}(x+1)^{d_2}\equiv0.
\end{equation}
If $d_2>0$ we substitute $x=-1$ and obtain $c_1=0$, which is impossible since $(x+1)^{d_2}$ is not divisible by $x$. If $d_2=0$ then $c_1(x-1)^{d_1}+c_2+x^{d_4}\equiv0$.
This implies $d_1=1$. 

Thus we can assume $(c_3,c_4)\neq (0,0)$. 
Shifting $x$ by $1$ for convenience we obtain that $x^{d_1}$ divides
$c_3(x+2)^{2d_2}+c_4(x+1)^{d_3}.$ Since we assumed $d_1\geq 2,$ this implies the vanishing of the constant term and the coefficient of $x$. These coefficients are
$2^{2d_2}c_3+c_4$ and $c_3d_22^{2d_2}+c_4d_3$. Thus $d_2=d_3$. 

If $d_1\geq 3$ we also have $\binom{2d_2}{2}2^{2d_2-2}=2^{2d_2}\binom{d_3}{2}$, which contradicts $d_2=d_3$. Thus we have $d_1\leq 2$. By a symmetric argument, this implies $d_2\leq 2$ as well.

It is left to consider the case $d_1=2, d_2=d_3\in \{1,2\}, d_4\geq 1$. 
From the term $x^{d_4+d_1+d_2}$ in \eqref{=c2} we see that we must have $d_2=d_3=d_4=1$ and $c_1=-1$. Substituting $x=0$ we get $c_1+c_2+c_3=0$, and substituting $x=-1$ we get $c_4=-16c_1$. However, since in this case $d_3=1<d_1+d_2+d_4=4$, the condition in \eqref{=c2} says $c_4=0$, contradicting $c_4=-16c_1=16$. Thus this case is impossible.
\end{proof}

\begin{lem}\label{lem:>0}
If \eqref{=c2} holds with  $d_1,d_2,d_3,d_4>0$ then $d_1=d_2=1, d_3=d_4+2,$ and $d_4\in \{1,2\}$.
\end{lem}
\begin{proof}
By the previous lemma we have $d_1=d_2=1$. Thus \eqref{=c2} has a term $x^{d_4}(x^2-1)$. To cancel this term we must have $d_3=d_4+2$ and $d_4\leq 2$, since other terms have degrees $d_3,2,1,0$. 
\end{proof}

\begin{lem}\label{lem:d20}
If \eqref{=c2} holds with  $d_1,d_3,d_4>0=d_2$ then $d_1=1$ and either $(d_3,d_4)= (3,2)$ or $d_4=1$.
\end{lem}
\begin{proof}
By Lemma \ref{lem:<=1} we have $d_1=1$. Thus, by \eqref{=c2}, the polynomial $c_4x^{d_3}+x^{d_4}(x-1)$ has degree at most 2. This implies that either $d_3=d_4+1$ and $d_4\leq 2$, or $d_4=1$. 
\end{proof}




\begin{proof}[Proof of Proposition \ref{prop:cases}]
By Lemma \ref{lem:ci}, \eqref{=c2} holds.
By Lemma \ref{lem:<=1}, we have $d_1=1$ and $d_2\leq 1$. If $d_2=1$, then by Lemma \ref{lem:>0} either \eqref{it:lWil} or \eqref{it:lcH} holds.

If $d_2=0$ then by Lemma \ref{lem:d20}, either $(d_3,d_4)= (3,2)$ or $d_4=1$.
If $(d_3,d_4)= (3,2)$ (and $d_2=0$) then \eqref{it:lcdH} holds.
Otherwise $d_4=1$ and the top degree of $s$ in the right-hand side of \eqref{=3pq2} is $\leq 2+\deg g$, and in the left-hand side $d_3+\deg g$. Thus $d_3\leq 2$ and either \eqref{it:lMex} or \eqref{it:lChar} holds. 
\end{proof}

\section{Proof of Theorem \ref{thm:rat}}\label{sec:PfRat}

Let $M$ be the $A$-module defined by $\cP$ in \S \ref{sec:PolMod}. By Theorem \ref{thm:1dim}, it is one-dimensional.
By abuse of notation, we will denote the element of $M$ defined by $\cP$ also by $\cP$. Let $f,h\in \C(u,s)$ be such that $S\cP=f\cP$ and $U\cP=h\cP$. By Theorems \ref{thm:mod} and \ref{thm:3termForm} we have
\begin{equation}\label{=fg}
f=\frac{q(u+s)p(s-u)Sg}{w(s)g}
\end{equation}
where  $g$ is coprime to $p(s-u), q(s+u)$ and $w(s-1)$, and $p,q$ are monic.


By Lemma \ref{lem:fxyRat} we have  $p(0)=0$ and $w(-1)=0$.
Thus $\deg p,\deg w\geq 1$ and $p(t)=tp_1$, $w=(s+1)w_1$ for some polynomials $p_1,w_1$. Let us check how change of the variable $z$ of the polynomials effects $f$\ and $R$. If we rescale $z\mapsto \lambda z$, and then renormalize both $\Phi_k$ and $P_n$ the family to make them monic again, the new family will be given by $(\lambda^{-1}R,\lambda f)$. If we shift $z\mapsto z+\mu$, the new family will be given by $(R+\mu,f)$. Thus we can assume that the constant term of $R$ is zero, and $R$ is either zero or monic. Thus $R(s)=sr(s-1)$ for some $r\in \C[s]$ that is either zero or monic. By Propositions \ref{prop:Jac} and \ref{prop:cases}, $\deg r\leq 1$. Consider the following cases.

\begin{enumerate}[{Case} 1]
\item  $r\equiv 0$.

 By Proposition  \ref{prop:Jac} we have $\deg p, \deg w\leq 2$. Using the rescaling $z\mapsto \lam z, f\mapsto \lam f$, we can assume that $p,q,$ and $w$ are monic.  

Consider first the case $\deg p=2$. By Proposition  \ref{prop:Jac} in this case we have $\deg q=\deg w=2$.
Then $\deg w_1=\deg p_1=1$ and thus $w_1=s+b, \, p_1=t+d$ for some $b,d\in \C$. 
  This is case \eqref{it:rat:b}.

It is left to consider the case $\deg p=1$, {\it i.e.} $p_1=1$. By Proposition  \ref{prop:Jac},  in this case $\deg q\leq 1$, $\deg w_1\leq1$.
Thus in order to show that we are in case \eqref{it:rat:a} it is left to exclude the possibility $\deg q=0, \, \deg w_1=0$, i.e. $w_1=1$, and $q=1$. If this was the case, the degree in $s$ of the LHS of \eqref{=3pq2} would be $\deg_sg+1$, while the degree in $s$ of the RHS would be $\deg_s g+ 2$. This is a contradiction, and thus this option is not possible. 

\item $r\not \equiv 0$.

In this case   by Proposition \ref{prop:cases} we have $p_1\equiv 1$, $\deg q, \deg r\leq 1$, and either\\ $\deg w_1=\deg q+\deg r+1$ or $q\equiv1\equiv w_1\equiv r$.

Thus it is left to prove that $w_1(s)=sq(s)r(s)+v(s)$ with $\deg v\leq 1$. In other words, we have to show that 
\begin{equation}\label{=w1}
\deg(w_1(s)-sq(s)r(s))\leq 1
\end{equation}
If $q\equiv1\equiv w_1\equiv r$ this clearly holds. If $\deg w_1\leq 1$, \eqref{=w1} also holds automatically. Suppose now that $\deg w_1\in \{2,3\}$, i.e. $\deg q+\deg r\in \{1,2\}$.

Substituting $p(t)=t$, $w(s)=(s+1)w_1(s),$ and $R(s)=sr(s-1)$ into \eqref{=3pq2} we obtain
\begin{multline}\label{=3pqB}
\gamma(u)sw_{1}(s-1)S^{-1}(g)=\delta_1(u)q(s+u)q(s+u-1)Ug \,+\\ +(\delta_0(u)+\gamma(u)sr(s-1))q(s+u-1)(s-u-1)g+\delta_{-1}(u)(s-u)(s-u-1)U^{-1}g.
\end{multline}

Consider $g$ as a polynomial in $s$ with coefficients in $\C[u]$, and denote its degree by $d$. 
Since $\deg q\leq 1,$ the only terms in \eqref{=3pqB} that can have $s$-degree more than $d+2$ are $$\gamma(u)sw_{1}(s-1)S^{-1}(g) \text{ and }\gamma(u)sr(s-1)q(s+u-1)(s-u-1)g.$$ Thus denoting by $\deg_s$ the degree in $s$, \eqref{=3pqB} implies 
\begin{equation}
\deg_s(w_{1}(s-1)S^{-1}(g)-r(s-1)q(s+u-1)(s-u-1)g)\leq d+1
\end{equation}
Shifting $s\mapsto s+1$ we obtain
\begin{equation}\label{=3pq2B}
\deg_s(w_{1}(s)g-r(s)q(s+u)(s-u)Sg)\leq d+1
\end{equation}

This implies that $w_1(s)$ is monic. If $\deg w_1=2$, the fact that $w_1(s)$ is monic  implies \eqref{=w1}. Suppose now that $\deg w_1=3$. Then $\deg r=\deg q=1$, thus $r(s)=s+a$ and $q_1(t)=t+c$ for some $a,c\in \C$. Write $w_1(s)=s^3+xs^2+ys+b$. Then \eqref{=w1} is equivalent to the equality $x=a+c$. 
Rewrite \eqref{=3pq2B} as 
\begin{equation}\label{=3pq2C}
\deg_s(w_{1}(s)(g-Sg)+(w_1(s)-r(s)q(s+u)(s-u))Sg)\leq d+1
\end{equation}

Denote by $LT$ the leading term in $s$, and let  $z(u):=LT(g)$. Then from \eqref{=3pq2C} we have $$LT(w_1(s)(g-Sg))=LT((w_1(s)-r(s)q(s+u)(s-u))Sg).$$ Now, 
$$LT(w_1(s)(g-Sg))=-dz(u)\text{ and }LT((w_1(s)-r(s)q(s+u)(s-u))Sg)=(x-a-c)z(u).$$
Thus $x=a+c-d$. Let $\lam$ be a root of $w_1(s-1)$. 
Replacing $g(u,s)$ by $g'(u,s):=\frac{g(u,s)}{(s+\lam)_{(d)}}$, we 
replace $w_1$ by $w_2$ satisfying $w_2(s-1)=s^3+(x+d)s^2+y's+b'$, and thus satisfying \eqref{=w1}.

\end{enumerate}
\proofend

\section{Proof of Theorem \ref{thm:main}, Proposition \ref{th:JacAlp}, and Corollary \ref{cor:ab}}\label{sec:PfMain}
\subsection{Proof of Theorem \ref{thm:main}}
Let $\cP=\{P_n\}_{n=0}^{\infty}$ be an HG-family given by the pair $(R,f)$. Let $\{\Phi_k\}_{k=0}^{\infty}$ be the corresponding Newtonian basis, and $c(n,k)$ be the corresponding sequence of coefficients. 
By Theorem \ref{thm:rat} there a polynomial $g\in \C[u,s]$ such that the reduced form of $f$ is
\begin{equation}\label{=fA}
f=\frac{Sg}{g}\cdot \frac{s-u}{s+1}\cdot\frac{p_1(s-u)q(u+s)}{w_1(s)},
\end{equation}
where $p_1,q_1,w_1\in \C[t]$ are polynomials in one variable that belong to a very restricted list. In particular, $D(s)=g(u,s)(s+1)w_1(s)$. This implies that $g$ does not depend on $u$. Thus $Ug=U^{-1}g =g$. 

Let us show that $g$ is constant.
Suppose by way of contradiction that $g$ is not constant, and let $c$ be a root of $g$ that is not a root of $S^{-1}g$. Substitute $s:=c$ into the 3-term relation \eqref{=3pq2}. Then the right-hand side vanishes, while the left-hand side does not. This is a contradiction, thus $g$ must be constant, and thus we can assume $g=1$.

In case \eqref{it:rat:a} of Theorem \ref{thm:rat}, the pair $(R,f)$ satisfies the conditions of \eqref{it:W} of Theorem \ref{thm:main}.

In case \eqref{it:rat:b} of Theorem \ref{thm:rat}, $R$ is a constant and $$f(u,s)=\frac{s-u}{s+1}\cdot\frac{(s-u+1-c)q(u+s)}{s+d},$$
for some  quadratic polynomial $q\in \C[t]$ and scalars  $c,d\in \C$. Using a shift of variable $z\mapsto z+\mu$, we can assume that $R=0$. Using rescaling $z\mapsto \lambda z$, we can assume that $q$ is monic. 
Now we have to show that either $d=1/2$ and $q(t)=(t+1)(t+c)$, or $d=3/2$ and $q=(t+2)(t+c+1)$.  This follows from Corollary \ref{cor:pqDet}.
\proofend

\subsection{Proof of Proposition \ref{th:JacAlp}}
Let $\{P_n\}_{n=0}^{\infty}$ be an HG family given by a pair
 $(R,f)$. Let us remind the definition of the sequences $\alp_n$ and $\beta_n$  in the proposition.
Let $f_1$ and $f_2$ be as in \eqref{=f12}:
\begin{equation*}
f_1(u):=f(u+1,u)^{-1} \text{ and } f_2(u):=f(u+2,u)^{-1}f_1(u+1),
\end{equation*}
 and define  rational functions  in  $\alp$ and $\beta$ in one variable $u$ by
\begin{align*}
\alp(u)=f_1(u-1)-f_1(u)-R(u), \text{ and }
\beta(u)=f_2(u-1)-f_2(u)-f_1(u)(\alp(u+1)+R(u))
\end{align*}
The sequences $\alp_n$ and $\beta_n$ are given by  $\alpha_n=\alpha(n)$ and $\beta_n=\beta(n)$ for all $n>0$, and 
 $$\alpha_0 =-f_1(0)-R(0), \quad \beta_0=-(f_1(0)-f_1(1)-R(1)+R(0))f_1(0)-f_2(0).$$

Suppose that $(R,f)$ is as in one of the cases (\ref{it:W}-\ref{it:F}) of Theorem \ref{thm:main}. 
We have to show that the family $\{P_n\}_{n=0}^{\infty}$ satisfies the recursion \eqref{=3t}: 
\begin{equation}\label{=3t3}
zP_n=P_{n+1}+\alp_nP_n+\beta_{n-1}{P_{n-1}}  \,\,\forall  n\in \Z_{\ge 0}
\end{equation}

For $n=0$ this is an easy verification.  For $n>0$ we have to show that 
\begin{equation}\label{=3tc3}
c(n,k-1)=c(n+1,k)+(\alp_n+R(k)) c(n,k)+\beta_{n-1} c(n-1,k)
\end{equation}
We have $$c(n,k)=f(n,k-1)c(n,k-1) \text{ and }c(n+1,k)=h(n,k)c(n,k)$$ for some $h(u,s)\in \C(u,s)$. We know that 
 \begin{equation}\label{=f3}
f=\frac{q(s+u)p(s-u)}{w(s)}, \quad h= \frac{\sigma(u)q(s+u)}{p(s-u-1)}.
\end{equation}
Substituting $k:=n$ we have 
$$h(n,n)= \frac{c(n+1,n)}{c(n,n)}=f^{-1}(n+1,n)=f_1(n)$$
and 
$$h(n,n)= \sigma(n)\frac{q(2n)}{p(-1)}$$
Thus $\sigma(n)=f_1(n){p(-1)}/{q(2n)}$.
Now, the recursion \eqref{=3tc3} is equivalent to the identity 
\begin{multline}
w(s-1)=\sigma(u)q(s+u)q(s+u-1) \,+\\ +(\alp(u)+R(s))q(s+u-1)p(s-u-1)+(\beta(u-1)/\sigma(u-1))p(s-u)p(s-u-1).
\end{multline}
Substituting $f_1(u)p(-1)/q(2u)$ for $\sigma(u)$ the identity is equivalent to
\begin{multline}
{q(2u)f_1(u-1)p(-1)}w(s-1)=f_1(u){p(-1)}f_1(u-1)p(-1)q(s+u)q(s+u-1) \,+\\ +(\alp(u)+R(s)){q(2u)f_1(u-1)p(-1)}q(s+u-1)p(s-u-1)+\beta(u-1)q(2u-2){q(2u)}p(s-u)p(s-u-1).
\end{multline}
Verification of this identity in each of the cases in Theorem \ref{thm:main} is long but straightforward.
\proofend

\subsection{Proof of Corollary \ref{cor:ab}}
Let $N(u,s)$ and $D(s)$ be the numerator and the denominator of $f$ in the reduced form.
\begin{lem}[Appendix \ref{subsec:Jacrat}]\label{lem:fxy}Suppose that $\cP$ is quasi-orthogonal. Then
\begin{enumerate}[(i)]
\item \label{it:fxy} $D(k)\neq 0$ for any $k\in \Z_{\geq 0}$. 
\item \label{it:xy!0} For all $n\geq k \geq 0$ we have $c(n,k)\neq 0$, and for all $n> k \geq 0$ we have $N(n,k)\neq 0$.
\item \label{it:xyc} For every $n\geq k\geq 0$ we have $c(n,k)=\prod_{i=k}^{n-1}f(n,i)^{-1}$.
\end{enumerate}
\end{lem}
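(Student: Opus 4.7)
The plan is to handle the three claims sequentially: Theorem \ref{thm:main} will do the heavy lifting for part (i), and parts (ii) and (iii) will then follow by straightforward inductions driven by the defining relation \eqref{=ND}.

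For part (i), I will invoke the classification Theorem \ref{thm:main} to reduce the statement to a case check. From the table \eqref{tab:pqw} one reads off $D(s) = w(s) = (s+1)w_1(s)$ explicitly in each of the five families, so that the roots of $D$ are $\{-1,-b\}$ in the Jacobi and Laguerre cases, $\{-1\}$ for Bessel, $\{-1,-1/2\}$ for $E_n^{(c)}$, and $\{-1,-3/2\}$ for $F_n^{(c)}$. The quasi-orthogonality parameter conditions from Theorem \ref{thm:main} (notably $b\notin \Z_{\leq 0}$ in the Jacobi and Laguerre cases) ensure in every case that no root of $D$ lies in $\Z_{\geq 0}$.

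For part (ii), I will use downward induction on $k$, starting at $k=n$. The base case $c(n,n)\neq 0$ is just the fact that $P_n$ has degree exactly $n$. For the induction step, the defining relation \eqref{=ND} rewrites as $c(n,k)N(n,k) = c(n,k+1)D(k)$; the right-hand side is nonzero by the inductive hypothesis together with part (i), and so both $c(n,k)$ and $N(n,k)$ must be nonzero. This delivers both assertions of (ii) simultaneously. Part (iii) then follows by iterating the recurrence in the form $c(n,k) = c(n,k+1)/f(n,k)$ (well defined thanks to parts (i) and (ii)) starting from the monic normalization $c(n,n)=1$, which matches the empty product at $k=n$; telescoping down yields the asserted product formula.

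The only substantive step is part (i), which really does appeal to the full classification. I do not see a clean elementary route that avoids Theorem \ref{thm:main}: from $D(k_0)=0$ one would deduce $c(n,k_0)N(n,k_0)=0$ for all $n$, and then propagate the vanishing downward via \eqref{=ND} to contradict the non-vanishing of leading coefficients, but doing so cleanly requires control of the $u$-zero locus of $N(u,k_0)$, which is most easily read off from the explicit description the classification provides.
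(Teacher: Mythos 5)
Your treatment of parts (ii) and (iii) is fine and is exactly the paper's: descending induction on $k$ from $c(n,n)=1$, using (i). The gap is in part (i), where your argument is circular relative to the paper's logical structure. You quote the ``quasi'' parameter conditions of Theorem \ref{thm:main} (notably $b\notin\Z_{\leq 0}$ for Jacobi and Laguerre) to conclude that $D$ has no root in $\Z_{\geq 0}$. But the proof of Theorem \ref{thm:main} in \S\ref{sec:Pf1} (via Theorem \ref{thm:frat} and Corollary \ref{cor:pqDet}) only pins down the \emph{shape} of $f$, i.e.\ of $p,q,w$ up to parameters; the assertion that quasi-orthogonality forces the parameters into the stated ranges is proved only later, in Corollary \ref{cor:ADef}, whose proof begins ``Since $w(k)\neq 0$ for any $k\in \Z_{\geq 0}$\dots\ Since $N(n,k)\neq 0$ for all $n>k\geq 0$\dots'' --- that is, it quotes precisely parts (i) and (ii) of the present lemma. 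Since for Jacobi/Laguerre one has $D(s)=(s+1)(s+b)$ up to a constant, the claim ``$D$ has no root in $\Z_{\geq 0}$'' is literally the statement $b\notin\Z_{\leq 0}$; deducing it from the ``quasi'' column assumes exactly what is to be proved.

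There is a clean elementary route, and it is the paper's. Suppose $D(k_0)=0$ for some $k_0\in\Z_{\geq 0}$. First, $N$ has no factor $s-l$ with $l\in\Z_{\geq 0}$: otherwise setting $(n,k)=(l+1,l)$ in \eqref{=ND} and using $c(l+1,l+1)=1$ gives $D(l)=0$, so $s-l$ would divide both $N$ and $D$ (recall $D\in\C[s]$), contradicting coprimality. Hence $N(u,k_0)\not\equiv 0$, so $N(n,k_0)\neq 0$ for all large $n$, and \eqref{=ND} at $k=k_0$ forces $c(n,k_0)=0$ for all large $n$; the same non-vanishing of $N(u,i)$ for $0\le i<k_0$ propagates the vanishing downward to $c(n,0)=0$ for all large $n$. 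Since $c(0,0)=1$, the set $Y=\{n:\,c(n,0)\neq 0\}$ is finite and nonempty; putting $n_0=\max Y+1$ and comparing constant coefficients in the three-term recursion \eqref{=3tA} gives $0=c(n_0+1,0)+\alp_{n_0}c(n_0,0)+\beta_{n_0}c(n_0-1,0)=\beta_{n_0}c(n_0-1,0)$, whence $\beta_{n_0}=0$, contradicting Favard's theorem. Note that the ``control of the $u$-zero locus of $N(u,k_0)$'' you were worried about amounts only to $N(u,k_0)\not\equiv 0$, obtained above with no classification input; and the contradiction comes not from leading coefficients (the propagated vanishing never reaches the diagonal $k=n$) but from quasi-orthogonality via $\beta_n\neq 0$, which is the essential ingredient your sketch omitted.
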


\begin{proof}[Proof of Corollary \ref{cor:ab}]
\begin{enumerate}[(i)]
\item If $\cP$ is well-defined and quasi-orthogonal, then $f_1(n)$ and $f_2(n)$ are finite by Lemma \ref{lem:fxy}. By Proposition \ref{th:JacAlp}, it satisfies the recursion \eqref{=3t3} with $\alp_n,\beta_n$ defined in Proposition \ref{th:JacAlp}. By the Gauss-Favard theorem (Theorem \ref{thm:Fav}) we must have $\beta_n\neq 0$.

Conversely, if $f_1(n)$ and $f_2(n)$ are finite then, by Proposition \ref{th:JacAlp} the family satisfies the recursion \eqref{=3t3} with $\alp_n,\beta_n$ defined in Proposition \ref{th:JacAlp}. If in addition $\beta_n\neq 0$ for all integer $n\geq 0$ then by Theorem \ref{thm:Fav} the family is quasi-orthogonal.

\item Follows from the recursion \eqref{=3t3} and Theorem \ref{thm:Fav}.

\item Follows from the recursion \eqref{=3t3}.
\end{enumerate}
\end{proof}

\section{Proof of Theorem \ref{thm:Classmain}}\label{sec:PfClassmain}
We start with the following lemma, that follows from the definition of hypergeometric functions (see \eqref{=pFq} in \S \ref{subsec:class}) by direct computation. 
\begin{lem}[Direct computation]\label{=resf}
Let
\begin{eqnarray}
&Q^0_n(z):=\frac{(-1)^n\prod_{k=1}^j(\delta_k)_{(n)}}{e^n\prod_{l=1}^i(\gamma_l)_{(n)}}\cdot {\,}_i F_j\left(\begin{matrix}-n& &\gam\\&\delta
\end{matrix};ez\right),\\
&Q^1_n(z):=\frac{(-1)^n\prod_{k=1}^j(\delta_k)_{(n)}}{e^n\prod_{l=1}^i(\gamma_l)_{(n)}}\cdot {\,}_i F_j\left(\begin{matrix}-n& &\gam & z  \\&\delta
\end{matrix};e\right)\\
&Q^2_n(-z(z-a)):=\frac{(-1)^n\prod_{k=1}^j(\delta_k)_{(n)}}{e^n\prod_{l=1}^i(\gamma_l)_{(n)}}\cdot {\,}_i F_j\left(\begin{matrix}-n& &\gam & z & -z+a\\&\delta
\end{matrix};e\right)
\end{eqnarray}
Then $\{Q^i_n\}$ are monic HG-families with  
 \begin{equation}\label{=resf2}
f(u,s)=e\frac{s-u}{s+1}\frac{\prod_{l=1}^i(s+\gamma_l)}{\prod_{k=1}^j(s+\delta_k)}
\end{equation}
and 
\begin{equation}\label{=Ri}
R(s)=\begin{cases}
                        0, & i=0\\
            s, & i=1\\
            s(s+a), & i=2
                 \end{cases}
\end{equation}

Conversely, if  $\{P_n\}$ is a monic polynomial HG-family with  $f$ given by \eqref{=resf2} and $R\in \{0,s,s(s+a)\}$ then $P_n(z)=Q^i_n(z)$ where $i$ is defined by $R$ via \eqref{=Ri}.
\end{lem}

Using the coordinate shift $z\mapsto z+\mu$ we can add a constant to $R$ and assume $R$\ has no constant term. Furthemore, using rescaling $z\mapsto \lambda z$  we multiply $f$ by $\lambda$ and divide $R$ by $\lambda$.  Thus from now on we assume  $R(s)=sr(s-1)$ with $r$ monic or zero, where the shift $s-1$ is for convenience.

Thus to obtain the polynomials in Theorem \ref{thm:main} we take $Q^i_n(z)$ with $i$ corresponding to $R$ as in \eqref{=Ri}, with $\gamma$ 
empty if $q=1$ in case \eqref{it:W}, $\gamma=\{n+c\}$ if $q(t)=t+c$  in case \eqref{it:W}, $\gamma=(1-n-c, n+1,n+c)$ for $E_n^{(c)}$ (case \eqref{it:E}) and $\gamma=(1-n-c, n+2,n+c+1)$ for $F_n^{(c)}$ (case \eqref{it:F}). The $\delta$ is the set of roots of $D(s)/(s+1)=w(s)/(s+1)$.

Using Lemma \ref{=resf}, we see that cases \eqref{it:E} and \eqref{it:F} of Theorem \ref{thm:main} correspond to cases \eqref{it:AE} and \eqref{it:AF} of Theorem \ref{thm:Classmain}.
The table in \eqref{tab:decomp} describes how case \eqref{it:W} of Theorem \ref{thm:main} decomposes to cases (\ref{it:AJac}-\ref{it:ACha})  of Theorem \ref{thm:Classmain}, depending on the degrees of $R,q$, and $v$.

To verify the areas of definition specified in Theorem \ref{thm:Classmain}, we need formulas for $\alp$ and $\beta$ for each case. The formulas are given by Proposition \ref{th:JacAlp}. For the classical families given in  cases (\ref{it:AJac}-\ref{it:ACha})  of Theorem \ref{thm:Classmain}, they coincide with the classically known formulas, see {\it e.g.} \cite[Ch. 9]{KLS}. 
For the family $E_n$ (case \eqref{it:AE} of Theorem \ref{thm:Classmain}) they are given by 
\begin{equation}
\alp(u)=-\frac{1}{2(2u+c-1)(2u+c+1)}, \quad \beta(u)=\frac{1}{16(2u+c-2)(2u+c-1)^2(2u+c)} \end{equation}
For the family $F_n$ (case \eqref{it:AF} of Theorem \ref{thm:Classmain}) they are given by 
\begin{equation}
\alp(u)=-\frac{1}{2(2u+c)(2u+c+2)}, \quad \beta(u)=\frac{1}{16(2u+c-1)(2u+c)^2(2u+c+1)} \end{equation}

Also, for both families we have $\beta_0=\beta(0)$, but for $F_n$ we have $\alp_0=\alp(0)$ and for $E_n$ we have $\alp_0=-1/(4c(c+1))\neq -1/(2(c-1)(c+1))=\alp(0)$. 
This one value gives the difference between $E^{(c)}_n$ and $F^{(c-1)}_n$.

\subsection{Proof of Theorem \ref{thm:Classmain}}

First of all, it is easy to check that if the conditions are satisfied, then the corresponding families defined by hypergeometric functions as in Theorem \ref{thm:main} are well-defined.  Furthermore, they are quasi-orthogonal since they satisfy the 3-term relation, with $\alpha_n=\alp(n)$ well-defined, and $\beta_n=\beta(n)$ well-defined and non-zero for all $n\in \Z_{>0}$. Thus the conditions are sufficient. Let us now show that they are necessary.

Since $D(k)\neq 0$ for any $k\in \Z_{\geq 0}$, we have $b\notin \Z_{\leq 0}$ for the Jacobi and the Laguerre families. Since $N(n+1,n)\neq 0$ and $N(n+2,n)\neq 0$  for all $n\in \Z_{\geq 0}$, we have $a\notin \Z_{<0}$ for the Jacobi and the Bessel families, and $c\notin \Z_{\leq 0}$ for $E_n^{(c)}$ and  $F_n^{(c)}$.  

Let us now analyze the $\beta(n)$ and $\beta_0$. By Proposition \ref{th:JacAlp}, we have
\begin{equation}\label{=b00}
\beta_0=\beta(0)-\frac{w(0)w(-1)}{p(-1)p(-2)q(0)q(1)}
\end{equation}
In particular, $\beta_0=\beta(0)$ unless $p(-1)p(-2)q(0)q(1)=0$.

By Proposition \ref{th:JacAlp} we have for the Jacobi family:
$$\beta(n)=\frac{(u+1)(u+a)(u+b)(u+1+a-b)}{(2u+a)(2u+a+1)^2(2u+a+2)}$$
The requirement that $\beta(n)$ is non-zero and well-defined for all $n\in \Z_{>0}$ implies that in addition $a-b\notin \Z_{<-1}$ for the Jacobi family.
For $a\neq 0$ we have $\beta_0=\beta(0)$, and thus $a-b\neq -1$. For $a=0$ we have 
$$\beta_0=\beta(0)+\frac{w(0)w(-1)}{p(-1)p(-2)q(0)q(1)}=\beta(0)+\frac{(b+1)b(b-1)}{2}=\frac{b^2(b-1)}{2}$$
Thus $b\neq 1$, and $a-b\neq -1$. Altogether we have $a-b\notin \Z_{<0}$.

Thus, the conditions  on the parameters specified in Theorem \ref{thm:Classmain} must hold in cases (\ref{it:AJac}-\ref{it:ALag}) and (\ref{it:AE},\ref{it:AF}). Let us now treat the remaining 5 cases.

\begin{enumerate}[(a)]
\setcounter{enumi}{3}
\item Wilson: we have $D(k)=w(k)=(k+1)(k+a+b)(k+a+c)(k+a+d)$. Thus the condition $D(k)\neq 0 \,\forall k\in \Z_{\geq 0}$ implies 
$a+b,\,a+c,\,a+d\notin \Z_{\leq 0}.$ The condition $N(n,k)\neq 0$ for all $n>k\geq 0\in \Z$ implies $a+b+c+d\notin \Z_{\leq 0}.$ 

From the formula for $\beta$ in \cite[\S 9.1]{KLS} we see that $b+c,c+d,b+d\notin\Z_{<0}$. Suppose now that $0\in \{b+c,c+d,b+d\}$. Then $\beta(0)=0$, and thus $q(0)=0$. Thus $a+b+c+d=1$, and thus $1\in \{a+d,a+b,a+c\}$. This implies that $w$ has a double zero at $-1$, while $q$ has a simple zero at 0. Thus \eqref{=b00} implies $\beta_0=\beta(0)=0$. Thus we must have $b+c,c+d,b+d\notin\bZ_{\leq 0}$.

\item Continuous dual Hahn: 
by \cite[\S 9.3]{KLS} we have 
$$\beta(u)=-(u+a+b)(u+a+c)(u+1)(u+b+c)$$
Since $\beta(n)=\beta_n\neq 0$ for all $n\geq 1 \in \Z$, we must have 
$a+b,a+c,b+c\notin \Z_{<0}$.
Now, $q(0)=1\neq0$ thus $\beta(0)=\beta_0\neq0$ and thus 
$0\notin \{a+b,a+c,b+c\}$.

\item Continuous Hahn: 
We have $D(k)=w(k)=(k+1)(k+c)(k+d)$. Thus the condition $D(k)\neq 0 \,\forall k\in \Z_{\geq 0}$ implies $c,d\notin \Z_{\leq 0}$.
The condition $N(n+1,n)\neq 0$ and $N(n+2,n)\neq 0$ for all $n\geq 0\in \Z$ implies $b+c+d\notin \Z_{\leq 0}.$ The formula for $\beta$ in \cite[\S 9.4]{KLS} is
$$
\beta(u)=-\frac{(u+b+c+d-1)(u+c)(u+d)(u+1)(u+b+c)(u+b+d)}{(2u+b+c+d-1)(2u+b+c+d)^2(2u+b+c+d+1)}
$$
Thus $b+c,b+d\notin\Z_{<0}$. Suppose now that $0\in \{b+c,b+d\}$. Then $\beta(0)=0$, and thus $q(0)=0$. Thus $b+c+d=1$, and thus $1\in \{d,c\}$. This implies that $w$ has a double zero at $-1$, while $q$ has a simple zero at 0. Thus $\beta_0=\beta(0)=0$, contradicting $\beta(0)=0$.
Altogether we have $c,d,b+c+d,b+c,b+d\notin \Z_{\leq 0}$.

\item Meixner: we have $w(s)=d(s+1)(s+b)$ for some $d\in \C$. Since this polynomial must be non-zero, we have $d\neq 0$. Since $D(k)=w(k)\neq 0 \,\,\forall k\in \Z_{\geq 0}$, we must have $b\notin \Z_{\leq 0}$. It is left to show that $d$ has the form $1-1/c$ with $c\notin \{0,1\}$, {\it i.e.} that $d\neq 1$. The formula for $\beta$ for this family is
$$\beta(u)=\frac{(u+1)(u+b)(1-d)}{d^2},$$
and since $\beta\not \equiv 0$ we must have $d\neq 1$.
\item Charlier: we have $w(s)=d(s+1)$ for some $d\in \C$. Since this polynomial must be  non-zero, $d$ has the form $d=-1/a$ for some $a\neq 0$. \proofend
\end{enumerate}

\begin{remark}
By Proposition \ref{th:JacAlp}, we have
\begin{equation}
\alp_0=\alp(0)+f_{1}(-1)=\alp(0)+\frac{w(-1)}{p(-1)q(-1)}
\end{equation}
Since $w(-1)=0,$ this implies that $\alp_0=\alp(0)$, unless $p(-1)=0$ or $q(-1)=0$. This happens only in the following cases:

\begin{enumerate}[(a)]
\item Jacobi and Bessel families for $a=1$.
\item Wilson families with $a+b+c+d=2$.
\item Continuous Hahn family with $b+c+d=2$.
\item The $E_n$ family.
\end{enumerate}
\end{remark}



\section{Investigation of the two new families $E_{n}^{(c)},F_{n}^{(c)}$}\label{sec:new}
\subsection{Preliminaries on Lommel polynomials}
Let us give some preliminaries on Lommel polynomials from \cite[\S 9]{Wat} and \cite[\S 6.5]{Ism}. They are defined by the recursive relation
\begin{equation}\label{=Lom}
2z(n+c)h_{n}^{(c)}=h_{n+1}^{(c)}(z)+h^{(c)}_{n-1}(z),\quad\quad h_{-1}^{(c)}(z)\equiv 0, \,\, h^{(c)}_{0}\equiv 1
\end{equation}
Explicitly, we have\footnote{\DimaH{M. Ismail kindly informed us of a typo in \cite[(6.5.8)]{Ism}:  $z/2$ should be $2z$.}}
\begin{multline}\label{=LomExp}
h_{2n}^{(c)}(z)=(-1)^n\sum_{k=0}^n\binom{n+k}{2k}(n+c-k)_{(2k)}(-2z)^{2k}, \\ h_{2n+1}^{(c)}(z)=(-1)^n\sum_{k=0}^n\binom{n+k+1}{2k+1}(n+c-k)_{(2k+1)}(-2z)^{2k+1}
\end{multline}

To describe the discrete measure on the real line for which these polynomials are orthogonal we will need some notation. Let $J_v(z)$ denote the modified Bessel function \cite{Wat}:
\begin{equation}
J_v(z)=\sum_{n=0}^{\infty}\frac{(-1)^n(z/2)^{c+2n}}{\Gamma(n+\nu+1)n!}
\end{equation}
Let $\{j_{\nu,k}\}_{k=1}^{\infty}$ denote the increasing sequence of positive zeroes of $J_{\nu,k}$ on $\R$.
Denote $a_{\nu,k}:=j_{\nu,k}^{-1}$.

\begin{thm}[{\cite[(6.5.17)]{Ism}}]\label{thm:lomOrth}
For every $c\in \R_{>0}$ and every $n,m\in \Z_{\geq 0}$ we have
\begin{equation}
\sum_{k=1}^{\infty}a_{c-1,k}^2(h_{n,c}(a_{c-1,k})h_{m,c}(a_{c-1,k})+h_{n,c}(-a_{c-1,k})h_{m,c}(-a_{c-1,k}))=\frac{\delta_{m,n}}{2(n+c)}
\end{equation}
\end{thm}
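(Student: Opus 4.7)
The plan is to view the statement as the classical orthogonality of Lommel polynomials and prove it by reducing to a Bessel-function identity. First, note that the recurrence \eqref{=Lom} has no ``$\alpha_n h_n^{(c)}$'' term, so $h_n^{(c)}(-z) = (-1)^n h_n^{(c)}(z)$ and the monic renormalizations satisfy a three-term recurrence with $\alpha_n \equiv 0$ and $\beta_n = 1/(4(n+c-1)(n+c))$; when $c > 0$ this is positive for all $n \geq 1$, so by Favard's theorem \emph{some} positive orthogonalizing measure exists, and by $\alpha_n \equiv 0$ it must be symmetric under $z \mapsto -z$. The problem is thus reduced to identifying the measure explicitly.

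Second, I would invoke the classical link between Lommel polynomials and Bessel functions: iterating the Bessel recurrence $J_{\nu+1}(x) = (2\nu/x) J_\nu(x) - J_{\nu-1}(x)$ upward from $\nu = c-1$ and comparing with \eqref{=Lom} at $z = 1/x$ gives
\[
J_{c+n}(x) \;=\; h_n^{(c)}(1/x)\, J_c(x) \;-\; h_{n-1}^{(c+1)}(1/x)\, J_{c-1}(x).
\]
Specializing at a positive zero $x = j_{c-1,k}$ of $J_{c-1}$ yields $h_n^{(c)}(a_{c-1,k}) = J_{c+n}(j_{c-1,k})/J_c(j_{c-1,k})$. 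Combined with the parity $h_n^{(c)}(-z) = (-1)^n h_n^{(c)}(z)$, the claim becomes
\[
\sum_{k=1}^{\infty} \frac{J_{c+n}(j_{c-1,k})\, J_{c+m}(j_{c-1,k})}{j_{c-1,k}^2\, J_c(j_{c-1,k})^2} \;=\; \frac{\delta_{mn}}{4(n+c)}
\quad (n, m \geq 0),
\]
the odd $n+m$ case being forced to zero already by parity.

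Third, the Bessel sum is evaluated by contour integration. Consider
\[
F(z) \;=\; \frac{J_{c+n}(z)\, J_{c+m}(z)}{z\, J_{c-1}(z)^2}
\]
integrated over large circles $|z| = R_N$ chosen to avoid zeros of $J_{c-1}$. The standard asymptotics $J_\nu(z) \sim \sqrt{2/(\pi z)} \cos(z - \nu\pi/2 - \pi/4)$ drive the boundary integral to $0$; the residue at each $\pm j_{c-1,k}$ (using $J_{c-1}'(j_{c-1,k}) = -J_c(j_{c-1,k})$) reproduces twice the $k$-th summand of the series; and the residue at $z = 0$, read off from the power-series expansion of $J_\nu(z) = (z/2)^\nu/\Gamma(\nu+1) + O(z^{\nu+2})$, contributes $\delta_{mn}/(2(n+c))$ after a short computation, thus yielding the identity.

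The main obstacle is the contour-integral step: one has to justify the vanishing of the boundary term uniformly along a carefully chosen subsequence $R_N \to \infty$ (since the $j_{c-1,k}$ accumulate), and one has to extract the residue at $z = 0$ cleanly by expanding $J_{c+n}(z) J_{c+m}(z)/J_{c-1}(z)^2$ as a Laurent series and showing that no undesired diagonal terms survive. An alternative avoiding complex analysis is to invoke completeness of the Fourier-Bessel system $\{J_{c-1}(j_{c-1,k}\, r)\}_{k \geq 1}$ in $L^2([0,1], r\, dr)$ and read the identity off Parseval's equation applied to $r^{c+n}$ and $r^{c+m}$; this is morally Hurwitz's approach to the classical Rayleigh sum $\sum_k j_{c-1,k}^{-2} = 1/(4c)$ (the $n = m = 0$ case), and provides a self-contained verification once that spectral theory is taken as a black box.
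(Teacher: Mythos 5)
First, note that the paper itself does not prove this statement: Theorem \ref{thm:lomOrth} is quoted verbatim from \cite[(6.5.17)]{Ism}, so any argument you supply is being measured against the literature rather than against a proof in the text. Your first two steps are correct and classical: the absence of a middle term in \eqref{=Lom} gives $h_n^{(c)}(-z)=(-1)^nh_n^{(c)}(z)$ and, for the monic renormalization, $\alpha_n\equiv 0$, $\beta_n=\tfrac{1}{4(n+c)(n+c-1)}>0$; and the relation $J_{c+n}(x)=h_n^{(c)}(1/x)J_c(x)-h_{n-1}^{(c+1)}(1/x)J_{c-1}(x)$ does give $h_n^{(c)}(a_{c-1,k})=J_{c+n}(j_{c-1,k})/J_c(j_{c-1,k})$, so the theorem is correctly reduced to the stated Bessel sum.

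The genuine gap is in the third step, the evaluation of that sum. For your kernel $F(z)=J_{c+n}(z)J_{c+m}(z)/\bigl(z\,J_{c-1}(z)^2\bigr)$ all three claimed ingredients fail. (a) The points $\pm j_{c-1,k}$ are \emph{double} poles, since $J_{c-1}^2$ vanishes to second order; using Bessel's equation ($J_{c-1}''(j)=-J_{c-1}'(j)/j$ at a zero $j$) the residue comes out as $\bigl(J_{c+n}J_{c+m}\bigr)'(j)\big/\bigl(j\,J_c(j)^2\bigr)$, which involves $J_{c+n}'(j),J_{c+m}'(j)$ and is not the summand $J_{c+n}(j)J_{c+m}(j)\big/\bigl(j^2J_c(j)^2\bigr)$. (b) Near $z=0$ the powers $z^{2c}$ cancel between numerator and denominator and $F(z)=O(z^{\,n+m+1})$, so $F$ is \emph{regular} at the origin and there is no residue there to produce $\delta_{mn}/(2(n+c))$. (c) On large circles avoiding the zeros, numerator and denominator have the same exponential growth, so $F=O(1/|z|)$ only, and the boundary integral is $O(1)$, not $o(1)$. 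The Parseval fallback also does not work as stated: the Fourier--Bessel coefficients of $r^{c+n}$ in the order-$(c-1)$ system are not proportional to $J_{c+n}(j_{c-1,k})$; by Sonine's first integral the functions with that property are $r^{c-1}(1-r^2)^n$, and Parseval for those produces sums weighted by $j_{c-1,k}^{-(n+m+2)}$ rather than $j_{c-1,k}^{-2}$ (your argument is only complete for $n=m=0$, the Rayleigh sum). A correct self-contained route is the one behind \cite[\S 6.5]{Ism}: combine the three-term recurrence with the Mittag--Leffler expansion of $J_c(z)/J_{c-1}(z)$ (Markov's theorem, or Hurwitz's limit formula together with uniqueness of the orthogonality measure) — or simply keep the citation, as the paper does.
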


\subsection{Discrete measure for which $\enc(z)$ and $\fnc(z)$ form orthogonal families}$\,$\\
Spelling out the definition of $E_n$ and $F_n$ as hypergeometric functions, we have
\begin{multline}\label{=EF_Exp}
E_{n}^{(c)}(z)=\frac{1}{2^{2n}c_{(2n)}}\sum_{k=0}^n\binom{n+k}{2k}(n+c-k)_{(2k)}2^{2k}z^{k}, \\ F_n^{(c)}(z)=\frac{1}{2^{2n}c_{(2n+1)}}\sum_{k=0}^n\binom{n+k+1}{2k+1}(n+c-k)_{(2k+1)}2^{2k}z^{k}
\end{multline}

Combining \eqref{=EF_Exp} and \eqref{=LomExp}
we have for any $n\in \Z_{\geq 0}$:
\begin{equation}\label{=EFLom}
\enc(-z^2) = \frac{(-1)^n}{2^{2n}c_{(2n)}} h_{2n}^{(c)}(z), \quad z\fnc(-z^2)=\frac{(-1)^{n}}{2^{2n+1}c_{(2n+1)}}h_{2n+1}^{(c)}(z).
\end{equation}
Theorem \ref{thm:lomOrth} and \eqref{=EFLom} imply the following corollary.
\begin{cor}
For every $c\in \R_{>0}$ and every $n,m\in \Z_{\geq 0}$ we have
\begin{equation}\label{=Em}
4c\sum_{k=1}^{\infty}a_{c-1,k}^2\enc(-a^2_{c-1,k})E_m^{(c)}(-a^2_{c-1,k})=\frac{c}{2^{4n}c^{2}_{(2n)}(2n+c)}\delta_{m,n}
\end{equation}
and
\begin{equation}\label{=Fm}
16c^2(c+1)\sum_{k=1}^{\infty}a_{c-1,k}^4\fnc(-a^2_{c-1,k})F_m^{(c)}(-a^2_{c-1,k})=\frac{c+1}{2^{4n}(c+1)^2_{(2n)}(2n+1+c)}\delta_{m,n}
\end{equation}
\end{cor}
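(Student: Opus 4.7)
The plan is to reduce both identities to Theorem \ref{thm:lomOrth} via the relations \eqref{=EFLom}, which express $E_n^{(c)}$ and $F_n^{(c)}$ as the even/odd halves of Lommel polynomials after substituting $z\mapsto -z^2$. The key observation is that $h_{2n}^{(c)}$ is an even polynomial in $z$ while $h_{2n+1}^{(c)}$ is odd, so evaluating at $\pm a_{c-1,k}$ converts the two terms of \eqref{thm:lomOrth} into (essentially) twice the same value in the even case, and twice the same value in the odd case, with the odd case producing an additional factor of $a_{c-1,k}^2$.

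For the first identity, I would start from \eqref{=EFLom} to write
\[
h_{2n}^{(c)}(\pm a_{c-1,k}) = (-1)^n 2^{2n} c_{(2n)}\, E_n^{(c)}(-a_{c-1,k}^2),
\]
substitute into the $n\mapsto 2n, m\mapsto 2m$ specialization of Theorem \ref{thm:lomOrth}, and use that the two sign choices give equal contributions to conclude
\[
2\cdot (-1)^{n+m} 2^{2n+2m} c_{(2n)}c_{(2m)} \sum_{k=1}^{\infty} a_{c-1,k}^2 \, E_n^{(c)}(-a_{c-1,k}^2)E_m^{(c)}(-a_{c-1,k}^2) = \frac{\delta_{m,n}}{2(2n+c)}.
\]
Rearranging and multiplying both sides by $4c$ gives \eqref{=Em}. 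Note that the $(-1)^{n+m}$ factor is harmless since the identity is vacuous unless $m=n$.

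For the second identity, the same strategy applies after specializing Theorem \ref{thm:lomOrth} to $n\mapsto 2n+1, m\mapsto 2m+1$. Now \eqref{=EFLom} gives
\[
h_{2n+1}^{(c)}(\pm a_{c-1,k}) = \pm (-1)^n 2^{2n+1} c_{(2n+1)}\, a_{c-1,k}\, F_n^{(c)}(-a_{c-1,k}^2),
\]
and the two sign choices again produce the same contribution because the signs multiply away in each summand. Substituting and using $c_{(2n+1)} = c\,(c+1)_{(2n)}$ converts the identity into \eqref{=Fm} after multiplying through by $16c^2(c+1)$; the extra $a_{c-1,k}^2$ coming from the odd-index relation combines with the $a_{c-1,k}^2$ already present to produce the $a_{c-1,k}^4$ weight.

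There is no real obstacle here, as this is essentially an algebraic bookkeeping exercise. The only minor point to verify carefully is the parity of the Lommel polynomials (which follows directly from \eqref{=LomExp}) and the correct arithmetic to reconcile the various Pochhammer symbols and powers of $2$ in the normalizations — in particular converting $c_{(2n+1)}^2$ into $c^2(c+1)_{(2n)}^2$ to make the prefactor $16c^2(c+1)$ match the right-hand side of \eqref{=Fm}.
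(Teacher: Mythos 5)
Your proposal is correct and is essentially the paper's own argument: the paper deduces the corollary directly from Theorem \ref{thm:lomOrth} combined with the relations \eqref{=EFLom}, exactly as you do, using the even/odd parity of $h_{2n}^{(c)}$ and $h_{2n+1}^{(c)}$ to merge the two terms and the identity $c_{(2n+1)}=c\,(c+1)_{(2n)}$ to match the normalizations.
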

\DimaG{The arguments of \cite[\S 6.5]{Ism} show that \eqref{=Fm} continues to hold for every \DimaK{non-zero} $c>-1$.}
\subsection{Differential equations}
Being hypergeometric functions of type $\,_4F_1$, the $\enc$ and $\fnc$ satisfy the following 4th order differential equations.
\begin{equation}
(D(D-1/2)-z(D-n)(D-n-c+1)(D+n+c)(D+n+1))\enc=0
\end{equation}
and
\begin{equation}
(D(D+1/2)-z(D-n)(D-n-c+1)(D+n+c+1)(D+n+2))\fnc=0,
\end{equation}
where $D=z\frac{\partial}{\partial_z}$ is the Euler operator.
\appendix
\section{Extensions to all fields of characteristic zero - proof of Theorem \ref{thm:allFields}}
\label{sec:allFields}

Let $F$ be a field of characteristic zero. Let $\cP=\{P_n\}$ be a quasi-orthogonal rational HG-family of  polynomials corresponding to a pair $(R,f)$.
Let $\Phi_k$ be the Newtonian basis defined by $R$, and let $c(n,k)$ be the family of coefficients of polynomials $P_n$ in the basis $\Phi_k$.
Define an $F$-algebra $A_F$ as in section \ref{sec:PolMod}. 
Let $M$ be the $A_F$-module defined by $c(n,k)$ and $v\in M$ be the element defined by $c(n,k)$. 
Our proof of Theorem \ref{thm:1dim} is completely algebraic, and holds over $F$ as well. Thus, $M$ is one-dimensional, and we have 
$Sv=fv$ and $Uv=hv$ for some $h\in K_F:=F(u,s)$. Then 
\begin{equation}\label{=F}
  \frac{Uf}{f}=\frac{Sh}{h} \text{ and }S^{-1}(f)^{-1} = h +\alp+R  + U^{-1}(\beta/h) \text{ for some }\alp,\beta\in F(u)
\end{equation}
Theorems \ref{thm:main} and \ref{thm:rat} are statements on all $f,h\in K_F$ satisfying \eqref{=F}. Every tuple $R,f,h,\alp,\beta$ is defined by finitely many elements $a_1,\dots,a_n$ of $F$, and the equations \eqref{=F} are algebraic conditions on these elements. In other words, these are statements about the subfield $L:=\bQ\langle a_1,\dots,a_n\rangle \subset F$ generated by $a_1,\dots,a_n$ over $\bQ$. 
Since $L$ has finite transcendence degree over $\bQ$, it can be embedded into the field $\bC$ of complex numbers.
Therefore, the tuple $(R,f,h,\alp,\beta)$ defines a tuple $(R_{\bC},f_{\bC},h_{\bC},\alp_{\bC},\beta_{\bC})$ that satisfies \eqref{=F} over $\bC$. 
This tuple in turn defines a meromorphic function $\Psi$ of the form $g\phi$, where $g\in \C(u,s)$ and $\phi$ is a meromorphic function of $\gamma$-type, such that $\Psi$ satisfies the conditions of Theorem \ref{thm:3termForm}, and therefore the analysis of \S\S \ref{sec:3t}-\ref{sec:PfRat} is valid for $f,h$. In particular, the analogue of Theorem \ref{thm:rat} holds for $(R,f)$. 

If $\cP$ is an HG family, then  
the analogue of Theorem \ref{thm:main} also holds for $(R,f)$ for the same reason. 
Proposition \ref{th:JacAlp} is a formal computation that holds over every field. Corollary \ref{cor:ab}\eqref{it:abqort}-\eqref{it:abchoice} and Theorem \ref{thm:Classmain} are also deduced from Theorem \ref{thm:main} and Proposition \ref{th:JacAlp} in the same way. \proofend

\section{Proof of technical lemmas}\label{sec:rat}

Recall that $\alp_n$ and $\beta_n$ denote the coefficients  in the 3-term recursion for $P_n$, {\it i.e.} for every $n>0$ we have 
\begin{equation}\label{=3tPP}
zP_n=P_{n+1}+\alp_nP_n+\beta_{n-1}{P_{n-1}} 
\end{equation}



Recall that we have relatively prime polynomials $N(u,s)$ and $D(u,s)$ such that
\begin{equation}\label{=hyp}
c(n, k + 1) D(n,k) = c(n, k) N (n, k)\quad \forall n,k\in \Z,
\end{equation} and  that we extend the domain of definition of the coefficients $c(n,k)$ to $\mathbb{Z}^2$ by setting $c(n,k) = 0$ outside of the range $0 \leq k \leq n$. We also have $c(n,n)=1$ for all $n\in \Z_{\geq 0}$. 

We will start with the following auxiliary lemmas.
\begin{lem}\label{lem:def}
Let $\{Q_n\}_{n=0}^{\infty}$ be a monic quasi-orthogonal family. Suppose that there exists $N\in \Z_{\geq0}$ such that $P_n=Q_n$ for all $n\geq N$. Then $P_n=Q_n$ for all $n$.
\end{lem}
\begin{proof}
 By the Gauss-Favard theorem (Theorem \ref{thm:Fav}), there exist sequences   $\gamma_n$, $\mu_n$  such that
\begin{equation}\label{=3tQ}
zQ_n=Q_{n+1}+\gamma_nQ_n+\mu_{n-1}{Q_{n-1}}.
\end{equation}

Let us prove by descending induction on $n$ that $P_n=Q_n$. The base is $n\in
\{N,N+1\}$. For the induction step assume $P_n=Q_n$ and $P_{n+1}=Q_{n+1}$. Then from (\ref{=3tPP},\ref{=3tQ}) we have 
\begin{equation}
zP_n-\alp_nP_n-\beta_{n-1}{P_{n-1}}=P_{n+1}=Q_{n+1}= zQ_n-\gamma_nQ_n-\mu_{n-1}{Q_{n-1}}
\end{equation}

Since $P_n=Q_n$  this implies
\begin{equation}
(\alp_n-\gamma_n)P_n+\beta_{n-1}{P_{n-1}}-\mu_{n-1}{Q_{n-1}}=0
\end{equation}
Since $P_n$ has degree $n$ and $P_{n-1},Q_{n-1}$ have degree $n-1$,  we have $\alp_n=\gamma_n$ and $P_{n-1}$ is proportional to $Q_{n-1}$. Since both polynomials are monic, they are equal. 
\end{proof}

\begin{lem}\label{lem:def2}
Let $\{Q^1_n\}_{n=0}^{\infty}$ and $\{Q^2_n\}_{n=0}^{\infty}$ be two monic quasi-orthogonal families. Suppose that for every $d$ there exists $N_n\in \Z_{\geq0}$ such that for every $n\geq N_d$ we have $\deg (Q^{1}_n-Q^{2}_n)< n-d$.  Then $Q^1_n=Q^2_n$ for all $n$.
\end{lem}

\begin{proof}
By the Gauss-Favard theorem (Theorem \ref{thm:Fav}), there exist sequences $\gamma^i_n$ and $\mu^i_{n-1}$  such that
\begin{equation}\label{=3tQQ}
zQ_n=Q_{n+1}+\gamma^i_nQ_n+\mu^i_{n-1}{Q_{n-1}}
\end{equation}
Denote the coefficients of the polynomials $Q^i_n$  by $a^i(n,k)$,  i.e. $Q^i_n=\sum z^ka^i(n,k)$. Considering the coefficient of $z^n$ in \eqref{=3tQQ} we have 
\begin{equation}
a^{i}(n,n-1)=a^{i}(n+1,n)+\gamma^i_n, \text{ thus }\gamma^i_n=a^{i}(n+1,n)-a^{i}(n,n-1)
\end{equation}
From the coefficient of $z^{n-1}$ we have
\begin{equation}
\mu^i_{n-1}=a^{i}(n,n-2)-a^{i}(n+1,n-1)-\gamma^i_na^{i}(n,n-1)
\end{equation}
Thus for $n\geq N_2$ we have $\gamma^1_n=\gamma^2_n$ and $\mu^1_{n-1}=\mu^2_{n-1}$.
Denote $Y_n:=Q^1_n-Q^2_n$.  
Let us show that 
\begin{equation}\label{=3tY}
zY_n=Y_{n+1}+\gamma^1_nY_n+\mu^1_{n-1}{Y_{n-1}} \text{ for all }n\geq N_2
\end{equation}
Indeed, for all $n\geq N_2$ we have
\begin{multline}
zY_n=z(Q^1_{n}-Q^2_{n})=Q^1_{n+1}+\gamma^1_nQ^1_n+\mu^1_{n-1}{Q^1_{n-1}}-Q^2_{n+1}-\gamma^2_nQ^2_n-\mu^2_{n-1}{Q^2_{n-1}}=\\
Q^1_{n+1}+\gamma^1_nQ^1_n+\mu^1_{n-1}{Q^1_{n-1}}-Q^2_{n+1}-\gamma^1_nQ^2_n-\mu^1_{n-1}{Q^1_{n-1}}=
Q^1_{n+1}-Q^2_{n+1}+\gamma^1_n(Q^1_n-Q^2_{n})+\mu^1_{n-1}({Q^1_{n-1}-Q^2_{n-1}})=\\
=Y_{n+1}+\gamma^1_nY_n+\mu^1_{n-1}{Y_{n-1}}
\end{multline}
Suppose by way of contradiction that $Y_{n_0}\not \equiv 0$ for some $n_0\geq 0$. Suppose first that $Y_{n_0}\not \equiv 0$ for some $n_0\geq N_2$.
Let $d_0:=n_{0}-\deg Y_{n_0}$. By \eqref{=3tY} we have $\deg Y_{n+1}=\deg Y_n+1$ for all $n\geq N_2$, and thus by induction $\deg Y_{n}=n-d_0$ for all $n\geq n_{0}$. However, by the condition of the lemma $\deg Y_n<n-d_0$ for $n\geq N_{d_0}$ reaching a contradiction. 

Thus we have $Y_{n}\equiv 0 $ for all $n>N_2$. Let us show by descending induction that $Y_{n}\equiv 0 $ for all $n$, reaching a contradiction. The base is $n\in
\{N_{2},N_{2}+1\}$. For the induction step assume $Y_n\equiv 0$ and $Y_{n+1}\equiv 0$. Then from (\ref{=3tQQ}) we have 
\begin{equation}
zQ^{1}_n-\gamma^1_nQ^1_n-\mu^1_{n-1}{Q^1_{n-1}}=Q^1_{n+1}=Q^2_{n+1}= zQ^2_n-\gamma^{2}_nQ^2_n-\mu^{2}_{n-1}{Q^2_{n-1}}
\end{equation}

Since $Q^{1}_n=Q^{2}_n$  this implies
\begin{equation}
(\gamma^1_n-\gamma^2_n)Q^{1}_n+\mu^1_{n-1}{Q^1_{n-1}}-\mu^2_{n-1}{Q^2_{n-1}}=0
\end{equation}
Since $Q^{1}_n$ has degree $n$ and $Q^{1}_{n-1},Q^{2}_{n-1}$ have degree $n-1$,  we have $\gamma^1_n=\gamma^2_n$ and $Q^1_{n-1}$ is proportional to $Q^2_{n-1}$. Since both polynomials are monic, they are equal. 
\end{proof}

\begin{proof}[Proof of Lemma \ref{lem:RDNdef}]

Let $\{Q^1_n\}_{n=0}^{\infty}$ and $\{Q^2_n\}_{n=0}^{\infty}$ be two monic quasi-orthogonal families with the same $R(s), D(u,s),N(u,s)$. 
By Lemma \ref{lem:def2}, it is enough to prove that for every $d\in \Z_{\geq 0}$ there exists $N_d\in \Z_{\geq0}$ such that for every $n\geq N_d$ we have $\deg (Q^{1}_n-Q^{2}_n)< n-d$.  

Fix $d\in \Z_{\geq 0}$. and for every $m\in \Z\cap [0,d]$ consider $N(u,u-m)$ as a polynomial in $u$. It has finitely many zeros, thus there exists $N_n\in \Z_{\geq0}$ such that for every $n\geq N_d$ we have $N(n,n-m)\neq 0$ for every  $m\in \Z\cap [0,d]$. For $j\in \{1,2\}$ let $c^{j}(n,k)$ denote the coefficients of $Q^1_n$ in the basis $\Phi_k$ defined by $R(k)$. 
Then by \eqref{=hyp0} we have $\forall n\geq N_d$ and $m\in \Z\cap [0,d]$: 
$$c^j(n,n-m)=\prod_{i=1}^m \frac{D(n,n-i)}{N(n,n-i)},$$
and thus $c^1(n,n-m)=c^2(n,n-m)$. Thus $\deg (Q^{1}_n-Q^{2}_n)< n-d$, and thus $Q^1_n=Q^2_n$ for all $n$.
\end{proof}

\begin{proof}[Proof of Lemma \ref{lem:fxyRat}] 
Substituting $n=k$ in \eqref{=hyp} we get $0=N(n,n)$. Thus the polynomial $N(u,u)$ vanishes identically, and thus $N$ is divisible by $s-u$. Substituting $k=-1$, we have $c(n,0)D(n,-1)=0$. We want to show that $D(n,-1)$ vanishes identically. Suppose the contrary. Then there exists a finite subset $Y\sub \Z_{\geq 0}$ such that $c(n,0)=0$ for all $n\in \Z_{\geq 0}\smallsetminus Y$ and $c(n,0)\neq 0$ for all $n\in Y$. The set $Y$ is non-empty since $c(0,0)=1$. Let $n=\max(Y) +1$. Then from the three-term recursion we get $0=c(n+1,0)+(\alp_n+R(0))c(n,0)+\beta_{n-1}c(n-1,0)=\beta_{n-1}c(n-1,0)$. Since $\beta_{n-1}\neq 0$, we get $c(n-1,0)=0$, contradicting $n-1\in Y$. Thus $D(u,-1)\equiv 0$, {\it i.e.} $D(u,s)$ is divisible by $s+1$. 
\end{proof}

\begin{proof}[Proof of Lemma \ref{lem:rat}]
Since $c(n,n)=1$, \eqref{=hyp} implies $c(n,n-1)N(n,n-1)=D(n,n-1)$. 
If $N(n,n-1)\equiv 0$ then $u-s-1$ divides $N$. Then $D(n,n-1)\equiv 0$ and thus $u-s-1$ divides also $D$. Since $N$ and $D$ are coprime, this is impossible, thus $N(n,n-1) \not \equiv 0$. Thus for $n$ outside a finite set we have 
\begin{equation}\label{=cnn_1}
c(n,n-1)=\frac{D(n,n-1)}{N(n,n-1)},
\end{equation} 
which is a well-defined rational function. 
Recall \eqref{=3tc}:
\begin{equation}\label{=3tcAp}
c(n,k-1)=c(n+1,k)+(\alp_n+R(k)) c(n,k)+\beta_{n-1}c(n-1,k)\quad \forall n,k\in\Z
\end{equation}
Substituting $k=n$ we obtain $\alp_n=c(n,n-1)-c(n+1,n)-R(n)$. By \eqref{=cnn_1} this is an \emph{almost rational function} on $\Z_{>0}$, {\it i.e.} coincides with a rational function outside a finite (possibly empty) set.

Substituting $k=n-1$ in \eqref{=3tcAp} we have 
\begin{equation}\label{=cn23}
c(n,n-2)=c(n+1,n-1)+(\alp_n+R(n-1))c(n,n-1)+\beta_{n-1}\quad \forall n\in\Z
\end{equation} 
By \eqref{=hyp} we have 
\begin{equation}\label{=cnn_2}
{N(n,n-2)}c(n,n-2)={D(n,n-2)}c(n,n-1)
\end{equation} 
If $N(n,n-2)\not \equiv 0$ then \eqref{=cnn_2} implies that $c(n,n-2)$ coincides with a rational function outside a finite set, and by \eqref{=cn23} so does $\beta_n$. It is left to consider the case  $N(n,n-2)\equiv 0$. Since $D,N$ are coprime, we have $D(n,n-2)\not\equiv 0$ and thus $c(n,n-1)\overset{a}{=}0$, {\it i.e.} vanishes outside a finite set. This implies $\alp_n\overset{a}{=}0$.
The set $X=\{l\in \Z_{> 0}\, \vert \, D(n,n-l)\equiv 0\}$ is finite and non-empty. Let $m=\max X$. 
Then $c(n,n-m+1)\cdot 0=c(n,n-m)N(n,n-m)$ and thus $c(n,n-m)\overset{a}{=}0$.
Denote $$Y:=\{l\in \Z_{\geq 0}\, \vert \, c(n,n-l)\overset{a}{=}0\}.$$ We just showed $m\in Y$. For any $l\in Y$ we have 
$$0\overset{a}{=}c(n,n-l)\overset{a}{=}c(n+1,n-l+1)+\beta_{n-1}c(n-1,n-l+1)\overset{a}{=}\beta_{n-1} c(n-1,n-l+1),$$
and thus $c(n-1,n-l+1)\overset{a}{=}0$ and $c(n,n-(l-2))\overset{a}{=}0$ and $l-2\in Y$ if $l\geq 2$. Since $0\notin Y$, this implies that $l$ is odd. By \eqref{=3tcAp} we have 
\begin{equation}
c(n,n-l-2)\overset{a}{=}c(n+1,n-l-1)+\beta_{n-1}c(n-1,n-l-1)\overset{a}{=}c(n+1,n-l-1)
\end{equation}
Thus $c(n,n-l-2)\overset{a}{=}d_l$ for some constant $d_{l}\in \C$. We have 
$$c(n,n-l-1)D(n,n-l-2)=d_lN(n,n-l-2).$$
If $d_l=0$ then $c(n,n-l-1)\overset{a}{=}0$, thus $l+1\in Y$, contradicting the fact that all elements of $Y$ are odd. 
 Thus $d_l\neq 0$, and thus $c(n,n-l-1)$ is an almost rational function (since $D(n,n-l-2)\not \equiv 0)$. 
By \eqref{=3tcAp} we have 
\begin{equation}
c(n,n-l-1)\overset{a}{=}c(n+1,n-l)+\beta_{n-1}c(n-1,n-l)
\end{equation}
Now, if $m=1$ then taking $l:=m$ we obtain that $\beta_{n-1}$ is an almost rational function. Otherwise, $m\geq 3$, and $m-2\in Y$. Thus $c(n,n-m+1)$ is an almost rational function, and so is its shift $c(n-1,n-m)$. Since $m-1\notin Y$, this almost rational function is not almost zero. Thus 
\begin{equation}
\beta_{n-1}\overset{a}{=}\frac{c(n+1,n-m)-c(n,n-m-1)}{c(n-1,n-m)}
\end{equation}
is an almost rational function.

Altogether, we showed that in all cases, both $\alp_n$ and $\beta_n$ coincide with  rational functions for all but finitely many $n$.
%
\end{proof}

\begin{proof}[Proof of Lemma \ref{lem:non0Sw}]
\DimaE{Let us first show that $w\neq 0$. Equivalently, we have to show that $c$ is not torsion. In other words, we have to show that the only polynomial $p\in \C[u,s]$ satisfying $p(n,k)c(n,k)=0$ for all $n,k\in \Z^2$ is $p=0$.
\DimaJ{
Let $p$ be such a polynomial, and suppose $p\neq 0$. For any $l$, consider the polynomial in $u$ given by $p(u,u-l)$. If it vanishes identically then $p$ is divisible by $u-s-l$. This can happen only for finitely many $l$.
For every $l$ for which this does not happen we have $c(n,n-l)\overset{a}{=}0$, {\it i.e.} $c(n,n-l)=0$ for all but finitely many $n$. Thus there exists a finite set $X\subset \Z_{\geq 0}$ such that for any $l\in \Z_{\geq 0}\smallsetminus X$ we have $c(n,n-l)\overset{a}{=}0$, but for $l\in X$ this does not hold.
Since $c(n,n)=1$, we have $0\in X$ and thus $X\neq \emptyset$. 
 Let $m=\max X$.  Then for any $l>m \in \Z$ we have $p(n,n-l)c(n,n-l)=0$, and thus $c(n,n-l)=0$ for all but finitely many $n\in \Z$. In particular, $c(n,n-m-2)\overset{a}{=}0$ and $c(n,n-m-1)\overset{a}{=}0$. By \eqref{=3tcAp} we have 
\begin{equation}
c(n,n-m-2)=c(n+1,n-m-1)+(\alp_n+R(n-m-1))c(n,n-m-1)+\beta_{n-1}c(n-1,n-m-1)
\end{equation}
and thus $\beta_{n-1}c(n-1,n-m-1)\overset{a}{=}0$. Since $\beta_n\neq 0$ for all $n\in \Z_{\geq 0}$ we get $c(n-1,n-m-1)\overset{a}{=}0$. Thus $c(n,n-m)\overset{a}{=}0$, contradicting $m\in X$.

}}

Let us now show that $Sw=fw$.
\DimaJ{By \eqref{=hyp} 
we have the equality 
$N(u,s)Sw=D(u,s)w$
 in $M$. This  implies $Sw=fw$.}
\end{proof}

\subsection{Proof of Lemma \ref{lem:fxy} on HG-families}\label{subsec:Jacrat}

\eqref{it:fxy}
We have to show that $D(s)$ has no zeros in $\Z_{\geq 0}$.

 Suppose the contrary: $D(k_0)=0$ for some $k_0\in \Z_{\geq 0}$.  By \eqref{=hyp} we have 
 \begin{equation}\label{=Lag0}
 c(n,k+1)D(k)=c(n,k)N(n,k)
 \end{equation}
This implies that $N(u,s)$ has no factors of the form $s-l_0$ for $l_0\in \Z_{\geq 0}$. Indeed, if such a factor existed then setting $n=l_0+1,k=l_0$ in \eqref{=Lag0} and using $c(l_0+1,l_0+1)=1$ we would obtain $D(l_0)=0$. Thus $D(s)$ and $N(u,s)$ would have a common factor, contradicting our assumptions.
 
 Setting $k=k_0$ in \eqref{=Lag0}  we have $c(n,k_0)=0$ for any $n$ with $N(n,k_0)\neq0$, {\it i.e.} $c(n,k_0)=0$ for all $n$ big enough.
By induction, using  \eqref{=Lag0}, this implies $c(n,i)=0$ for all $0\leq i\leq k_0$ and $n$ big enough.  In particular, the set $Y:=\{n \, \vert \, c(n,0)\neq0\}$ is finite. Let $n_0:=\max Y+1>0$. By \eqref{=3tcAp} we have
\begin{equation}\label{=3tc0}
0=c(n_0,-1)=c(n_0+1,0)+(\alp_{n_0}-R(0)) c(n_0,0)+\beta_{n_0-1}c(n_0-1,0)=\beta_{n_0-1}c(n_0-1,0)
\end{equation}
Since $n_0-1\in Y$, we have $c(n_0-1,0)\neq0$ and thus $\beta_{n_0-1}=0$, contradicting the condition that the family is quasi-orthogonal.

\eqref{it:xy!0} follows  from  \eqref{=Lag0}  by descending induction on $k$, using $c(n,n)=1$ and $D(k)\neq 0$.

\eqref{it:xyc} follows from \eqref{=Lag0}  by descending induction on $k\in [0,n]$, the base case being $c(n,n)=1$.
\proofend

\DimaJ{
\section{Examples of rational HG-families}\label{app:gauge}
All the examples of quasi-orthogonal \textbackslash\ HG-families that are not HG-families  that we found are built in the same way. We find two HG-families $P_n,Q_n$ that define the same module $M$, and have the same rational functions $\alp,\beta$, but have different $\alp_0$. Then we define a new family $R_n^{\mu}:=\mu P_n + (1-\mu)Q_n$. It lies in the same module and satisfies the same 3-term recursion for $n>0$, and thus is a rational HG family. 
In the three subsections below we do this for the Jacobi and continuous Hahn families with very special parameters,  and for the two new families $E_n^{(c)}$ and $F_n^{(c)}$ (that actually define the same module). In \S \ref{subsec:diff} we consider 
a bit more elaborate construction: we let $R_n:=P_{n+1}-Q_{n+1}$ (which has degree $\leq n$ if $P_n,Q_n$ are monic).
\DimaL{In \S \ref{subsec:ApTab} below we summarize the examples in a table.}
\subsection{Jacobi polynomials with $a=1,b=l+1/2$}\label{subsec:BJac}

Consider the Jacobi family for $a=1$. Then 
$$\alp=1/2, \text{ and }\beta=\frac{u^2-(b-1)^2}{4u^2-1}.$$

We see that for $b'= 2-b$  we have the same recurrence relation. However, the 2 families are different, since they have different starting condition: $p_1=x-b/2$. Thus, any linear combination of the two families is quasi-orthogonal (unless $b$ is an integer). Their average, that has the initial condition $p_1=x-1/2$, is called ``exceptional Jacobi polynomial' - see
\cite[Ch. 4, Remark 4.2.1]{Ism}. It is not always a rational HG-family, unless $b$ is a half-integer.
If $b$ is half-integer then any linear combination is a rational HG-family. 

Let $P_n$ denote the Jacobi family for the parameters $a=1, b=l+1/2$ where $l\in \Z_{>0}$, and $Q_n$ denote the Jacobi family for the parameters $a=1, b'=2-b=3/2-l$. Then $Q_{n,k}=\frac{r(k)}{r(n)}P_{n,k}$, where  
\begin{equation}
r(t)=(t-l+3/2)_{(2l-1)}
\end{equation}
Consider the family $R^{l,\lam}_n:=(\lam P_n+Q_n)/(\lam+1)$ for $\lam \neq -1$. 
Up to normalization, it is $g\cP$, where 
\begin{equation}
g(u,s)=\lam + \frac{r(s)}{r(u)}=\frac{r(s)+\lam r(u)}{r(u)}
\end{equation}
Consider first the case $l=1$. Then $r(t)=t+1/2$, and $g=\frac{\lam (u+1/2) +s +1/2}{u+1/2}$. Thus
\begin{equation}
R^{1,\lam}_n=\,_3F_2(-n,n+1,\lam n + \frac{\lam +3}{2}; 3/2,\lam n + \frac{\lam +1}{2} ;z)
\end{equation}
Thus this family satisfies a third order differential equation. For $\lambda=1$ this is the Jacobi family with $a=2$ and $b=3/2$. For $\lambda \notin \{-1,0,1\}$ this is a rational HG family, that is not an HG family.

From the formula for $\beta$, we see that this family orthogonal, for all real
$\lam \neq -1$. 

For $l>1$, the resulting $g$ does not seem to decompose to a product of linear factors. For $\lam =1$ and $l=2$, the numerator of $g$ is $(s+u+1)(4s^2 - 4su + 2s + 4u^2 + 2u - 3)$.

Actually, all the Jacobi families with $a=1, b=l+1/2$  lie in the same 1-dimensional module.
\subsubsection{The case $\lam=-1$}\label{subsec:diff}
The polynomial $Q_n-P_n$ will have degree less than $n$. Thus let us define \begin{equation}
R_n:=U(Q_n-P_n)
\end{equation}
For this family we have 
\begin{equation}
f=\frac{r(s+1)-r(u+1)}{r(s)-r(u+1)}\cdot \frac{(s-u-1)(s+u+2)}{(s+1)(s+l+1/2)}
\end{equation}
For $l=1$ we get that this is a Jacobi family with parameters $a=2,\,b=3/2$ in our notation. 
For $l=2$ we get a rational HG-family that is not an HG-family.

\subsection{The new families $E_n^{(c)}$, $F_n^{(c)}$}
Let $c,\lam\in \C$ with $c\notin \bZ_{\leq 0}$, and define 
\begin{multline}
g=(u+s+1)(s-u-c)-\lam(u+s+c+1)(s-u),\\ p(t):=t(t-c), \,\,q(t):=(t+1)(t+c+1), \,\,w(s):=(s+1)(s+3/2)
\end{multline}
and 
\begin{equation}\label{=frat}
f(u,s):=\frac{p(s-u)q(u+s)g(u,s+1)}{w(s)g(u,s)},\,\,h= \frac{w(u)q(u+s)g(u,u)g(u+1,s)}{q(2u)q(2u+1)p(s-u-1)g(u+1,u+1)g(u,s)}
\end{equation}
Let $\cP^{c,\lam}$ be the polynomial family defined by $f$ via the formula $c(n,k)=\prod_{i=k}^{n-1}f(n,i)^{-1}$.
Let 
\begin{equation}
\alp=-\frac{1}{2(2u+c)(2u+c+2)} \text{ and } \beta=\frac{1}{16(2u+c+1)(2u+c)^{2}(2u+c-1)}
\end{equation}
Then by a straightforward computation we get $c_{n,k+1}=f(n,k)c_{n,k},\,\,c_{n+1,k}=h(n,k)c_{n,k},$ and \begin{equation}
S^{-1}(f)^{-1} = h +\alp  + U^{-1}(\beta/h).
\end{equation}
Thus, $zP^{c,\lam}_n=P^{c,\lam}_{n+1}+\alp(n)P^{c,\lam}_n+\beta(n-1)P^{c,\lam}_{n-1}$ and thus the family is quasi-orthogonal.
Note that for $\lam=0$ we obtain the family $F^{(c)}_n$ and for $\lam=1$ we obtain the family $E_n^{(c+1)}$. The rational functions $\alp$ and $\beta$ do not depend on $\lam$ and are the same as for $F_n^{(c)}$ and $E_n^{(c+1)}$.
The difference is in $
\alp_0$.
 
For $\lam\notin \{0,1\}$ this is a rational HG-family, but not an HG-family, since the denominator of $f$ depends on $u$. For all $\lam$ this family is quasi-orthogonal, and for real $\lam$ and real $c>-1$ the family is orthogonal. 

\subsection{Continuous Hahn family with $b=1-l$, $c=l+1/2$, $d=1/2$}\label{subsec:BcH}

Let $l$ be a natural number. Similarly to \S \ref{subsec:BJac}, the  Continuous Hahn families with $b=1-l$, $c=l+1/2$, $d=1/2$ and with $b'=l$, $c'=3/2-l$, $d'=d=1/2$ lie in the same module, and have the same rational functions $\alp,\beta$. They also have $\beta_0=\beta(0)$, but different $\alp_0$. In this case we have 
$r(t)=(t+3/2-l)_{(2l-1)}$, and 
$$g(u,s)=\lam+\frac{r(s)}{r(u)}$$
In the special case $l=1$ we have 
$$R_n^\lam={}_4F_3(-n,n+1,\lam n +\frac{\lam+3}{2}, iz; \frac{3}{2},\frac{1}{2},\lam n+ \frac{\lam +1}{2};1)$$
For $\lam=1$ this is another continuous Hahn family, but for $\lam\notin\{-1,0,1\}$ it is a rational HG family that is not an HG family. For every real $\lam \neq -1$ this family is orthogonal.

For  $l=2$ the $g$ does not factor into linear factors, and probably also not for bigger $l$. 
For all integer $l> 1$ and all real $\lam \neq -1$, the family is not orthogonal, but is quasi-orthogonal.

In the case $\lam=-1$ we can define a different family, similar to \S \ref{subsec:diff}.

\subsection{Summarizing table}\label{subsec:ApTab}
The four families described in this subsection can be presented up to renormalization as $P_n=\sum_{k=0}^n g(n,k)c(n,k)\Phi_k$, where $g\in \C(u,s)$, and 
$c(n,k)$ and $\Phi_k$ are given by an HG family $\cQ$. The $g$ and the $\cQ$ are given by the following table.
{\small
\begin{equation}\label{tab:Ap}
\begin{tabular}{c c} 
\hline 
$\cQ$  & $g(u,s)$ \\  
\hline 

Jacobi with $a=1$, $b=l+1/2, \, l\in \Z_{>0}$& $(s-l+3/2)_{(2l-1)}+\lam(u-l+3/2)_{(2l-1)}, \, \lam \neq -1$   \\   \hline\\

Jacobi with $a=2$, $b=l+1/2, \, l\in \Z_{>0}$& $\frac{(s-l+3/2)_{(2l-1)}-(u-l+5/2)_{(2l-1)}}{s-u-1}$\\\\   \hline\\
$\,_4F_1(-n,-n-c,n+1,n+c+1; 3/2;z)$
 & $(u+s+1)(s-u-c)-\lam(u+s+c+1)(s-u)$\\
\hline 
Cont. Hahn with $b=1-l$, $c=l+1/2$, $d=1/2, \, l\in \Z_{>0}$& $(s-l+3/2)_{(2l-1)}+\lam(u-l+3/2)_{(2l-1)}, \, \lam \neq -1$\\\\   \hline\\

\end{tabular}
\end{equation}
}
}
\subsection*{Acknowledgements} It is a real pleasure to thank Ehud de Shalit and Don Zagier for fruitful discussions, and Tom Koornwinder and Mourad Ismail for their insightful and helpful remarks on a preliminary version of the paper. The research of J. Bernstein was partially supported by the ISF grant 1400/19. The research of S. Sahi was partially supported by NSF grants DMS-1939600 and 2001537, and Simons Foundation grant 509766. \Dima{The research of D. Gourevitch was partially supported by BSF grant 2019724 and ISF grant 1781/23. A significant part of the research was done during the visits of S. Sahi at the Weizmann Institute, the visit of D. Gourevitch at the IAS, and the visit of J. Bernstein to the MPIM, Bonn in 2023. We thank the three institutes for their hospitality, and the BSF for funding two of these visits.}


\end{document}